\documentclass[12pt]{article}
\title{Computing characteristic classes of subschemes of smooth toric varieties}
\author{
        Martin Helmer  \\ \normalsize
       Department of Mathematics,\\ \normalsize
 University of California, Berkeley\\ \normalsize
  \texttt{martin.helmer2@gmail.com}
  }
\date{\today}
\usepackage[toc,page]{appendix}
\usepackage{chngcntr}
\counterwithin{table}{section}
\usepackage{booktabs}% http://ctan.org/pkg/booktabs
\usepackage{colortbl}% http://ctan.org/pkg/colortbl
\usepackage{xcolor}% http://ctan.org/pkg/xcolor
\usepackage{graphicx}% http://ctan.org/pkg/graphicx

 \definecolor{Ftitle}{RGB}{11,46,108}
\definecolor{line}{RGB}{87,39,117}
\colorlet{tableheadcolor}{Ftitle!25} % Table header colour = 25% gray
\colorlet{tablerowcolor}{gray!10} % Table row separator colour = 10% gray

\usepackage[ pdfborderstyle={/S/U/W 1}]{hyperref}
\definecolor{MyBlue}{HTML}{210cac}
\definecolor{MyCiteColor}{HTML}{0099FF}
\definecolor{MyRed}{HTML}{3E186A}%{CC033C}
\hypersetup{%
  pdfborderstyle={/S/U/W 1},
  colorlinks=true,
  linkcolor=MyBlue,
  citecolor=MyCiteColor,
  urlcolor=MyRed,
  anchorcolor=MyBlue,
  linkbordercolor=MyRed,
}

\newcommand{\codim}{\mathrm{codim} }

\newcommand{\CC}{\mathbb{C}}
\newcommand{\Spec}{\mathrm{Spec}}
\newcommand{\Proj}{\mathrm{Proj}}

\newcommand{\Hom}{\mathrm{Hom}}
\newcommand{\ZZ}{ \mathbb{Z}}
\setlength{\parindent}{0pt} 
\setlength{\parskip}{2ex}

\newcommand{\oo}{\mathcal{O}}
\newcommand{\pp}{\mathbb{P}}

\usepackage{xcolor}
\usepackage[margin=0.9in]{geometry}
\definecolor{mypurple}{HTML}{5B1280}

\usepackage{amsmath,amsthm}
\usepackage{amsfonts}
\newtheorem{theorem}{Theorem}[section]
\newtheorem{propn}[theorem]{Proposition}
\newtheorem{corr}[theorem]{Corollary}
\newtheorem{lemma}[theorem]{Lemma}

\newtheorem{defn}[theorem]{Definition}
\newtheorem{algorithm}{Algorithm}
\newtheorem{example}[theorem]{Example}

\newtheorem*{theorem*}{Theorem}

\begin{document}
\maketitle
\begin{abstract} \noindent
Let $X_{\Sigma}$ be a smooth complete toric variety defined by a fan $\Sigma$ and let $V=V(I)$ be a subscheme of $X_{\Sigma}$ defined by an ideal $I$ homogeneous with respect to the grading on the total coordinate ring of $X_{\Sigma}$. We show a new expression for the Segre class $s(V,X_{\Sigma})$ in terms of the projective degrees of a rational map specified by the generators of $I$ when each generator corresponds to a numerically effective (nef) divisor. Restricting to the case where $X_{\Sigma}$ is a smooth projective toric variety and dehomogenizing the total homogeneous coordinate ring of $X_{\Sigma}$ via a dehomogenizing ideal we also give an expression for the projective degrees of this rational map in terms of the dimension of an explicit quotient ring. Under an additional technical assumption we construct what we call a general dehomogenizing ideal and apply this construction to give effective algorithms to compute the Segre class $s(V,X_{\Sigma})$, the Chern-Schwartz-MacPherson class $c_{SM}(V)$ and the topological Euler characteristic $\chi(V)$ of $V$. These algorithms can, in particular, be used for subschemes of any product of projective spaces $\pp^{n_1} \times \cdots \times \pp^{n_j}$ or for subschemes of many other projective toric varieties. Running time bounds for several of the algorithms are given and the algorithms are tested on a variety of examples. In all applicable cases our algorithms to compute these characteristic classes are found to offer significantly increased performance over other known algorithms.
\end{abstract}

\section{Introduction}\label{section:intro}
Beginning with observations of Descartes (circa 1639) and first formalized in Euler's polyhedral formula (circa 1751) the Euler characteristic has become an important tool for the consideration of a diverse selection of mathematical problems. Modern realizations of the Euler characteristic have proved especially important in algebraic geometry and algebraic topology, enabling, among other things, the classification of orientable surfaces. In what follows, by Euler characteristic we mean the topological Euler characteristic. 

The topological Euler characteristic is also of interest in applications. For example, Huh \cite{huh2012maximum} and Rodriguez and Wang \cite{rodriguez2015maximum} apply the Euler characteristic of projective varieties to study problems of maximum likelihood estimation in algebraic statistics. Applications to string theory in physics include Aluffi and Esole \cite{aluffi2009cherntadpole} and Collinucci, Denef, and Esole \cite{collinucci2009d}. 

Let $M$ be a smooth variety and let $V$ be some subscheme of $M$. The Euler characteristic of $V$ may be obtained directly from the Chern-Schwartz-MacPherson class of $V$, $c_{SM}(V)$. More specifically, if we consider $c_{SM}(V)$ as an element of the Chow ring of $M$, $A^*(M)$, we have that $\chi(V)$ is equal to the degree of the zero dimensional component of $c_{SM}(V)$. It is this method that we shall use to obtain the Euler characteristic. In the case of subschemes of a projective space $\pp^n$ this approach has been used by several authors (e.g.\ Aluffi \cite{aluffi2003computing}, Jost \cite{jost2013algorithm}, the author \cite{Helmer2015,HelmerTCS}) to construct different algorithms which are capable of calculating Euler characteristics of complex projective varieties. 

In this note, we consider the computation of the Segre class, and the $c_{SM}$ class (and hence the Euler characteristic) of subschemes in a more general setting. More specifically we significantly generalize all of the results and algorithms presented by the author in \cite{Helmer2015,HelmerTCS} from the setting of subschemes of projective varieties to the setting of subschemes $V$ of certain smooth projective toric varieties $X_{\Sigma}$. While we only present the algorithm overviews (see \S\ref{section:SegreCSMMultiProj}) for subschemes of smooth projective toric varieties satisfying an additional technical condition these restrictions will not be imposed on the majority of the results. The main portion of this work will be several new theorems which will form the basis for algorithmic computation.

We now give a brief overview of the main results of this note. In what follows $X_{\Sigma}$ will always denote a toric variety defined by a fan $\Sigma$ and $R$ will denote the \textit{Cox ring} or \textit{total coordinate ring} of $X_{\Sigma}$. We will often also require that $X_\Sigma$ is either smooth and complete or smooth and projective. 

Let $X_{\Sigma}$ be a smooth complete toric variety and let $V=V(I)$ be a subscheme of $X_{\Sigma}$ defined by a homogeneous ideal $I$ in $R$, note that $R$ may be a multi-graded ring and by homogeneous we mean homogeneous with respect to the grading on $R$, see \S\ref{subsection:homhgeneCoordRing} for a discussion of this. We further suppose (without loss of generality) that a set of generators $f_0,\dots, f_r$ has been chosen for $I$ such that all the $f_i$ have the same multi-degree. Since all $f_i$ have the same multi-degree then we have $[V(f_i)]=\alpha \in A^1(X_{\Sigma})$ for all $i$ (where $A^1(X_{\Sigma})$ is the codimension one Chow group of $X_{\Sigma}$). 

We begin with a theorem which gives an explicit expression for the Segre class $s(V,X_{\Sigma})$ in  $A^*(X_{\Sigma})$, the Chow ring of $X_{\Sigma}$. Define a rational map $\phi:X_{\Sigma} \dashrightarrow \pp^r$ given by \begin{equation}
\phi: p \mapsto (f_0(p):\cdots: f_r(p)).\label{eq:phi_intro}
\end{equation}
Let $\displaystyle [Y_{\iota}]=\left[ {\phi^{-1}(\pp^{r-{\iota}})} \right] \in A^*(X_{\Sigma})$ where $\pp^{r-{\iota}}$ denotes a general linear subspace of dimension $r-{\iota}$ in $\pp^r$. Note that the cycle $[Y_{\iota}] $ has pure codimension $\iota$. Letting $ \omega_1^{(\iota)}, \dots, \omega_\mu^{(\iota)}$ be a basis of $A^{\iota}(X_{\Sigma})$ we may write $[Y_{\iota}]  = \sum_{ i=1}^\mu \gamma_{i}^{(\iota)} \omega_i^{(\iota)} $. Following the classical terminology of Harris \cite[Example 19.4]{harris1992algebraic} for a similar construction for rational maps between projective spaces we refer to the $\gamma_{i}^{(\iota)} $ as the \textit{projective degrees} of the rational map $\phi$. With these notations we have the following result, which will be proved in Theorem \ref{theorem:SegreMultiProj} below.\begin{theorem*} Let $V=V(I)$ be a subscheme of $X_{\Sigma}$ defined by an ideal $I=(f_0,\dots, f_r)$ which is homogeneous with respect to the grading of the total coordinate ring $R$ with $[V(f_i)]=\alpha $ for all $i$, further suppose that $\alpha$ is a numerically effective (nef) divisor. Then we have that
$$\vspace{-2mm}
 s(V,X_{\Sigma}) =1- \frac{1}{(1+\alpha)}\left(\sum_{\iota \geq 0} \frac{[Y_{\iota}]}{(1+\alpha)^{\iota} } \right).
$$ 
\end{theorem*}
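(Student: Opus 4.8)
The plan is to resolve the rational map $\phi$ by blowing up its base scheme and to transport the computation of $s(V,X_\Sigma)$ onto the blow-up, where both the projective-degree cycles and the Segre class acquire a common description. Let $\pi\colon\tilde{X}\to X_\Sigma$ be the blow-up of $X_\Sigma$ along $V=V(I)$ and let $E$ be the exceptional Cartier divisor, so that $\pi^{-1}\mathcal{I}\cdot\oo_{\tilde X}=\oo_{\tilde X}(-E)$. Since the base scheme of the linear system $\langle f_0,\dots,f_r\rangle\subseteq H^0(X_\Sigma,\oo(\alpha))$ is exactly $V$, blowing up this ideal sheaf renders the pulled-back system base-point free, so $\phi$ lifts to a morphism $\tilde\phi\colon\tilde X\to\pp^r$ with $\tilde\phi^*\oo_{\pp^r}(1)=\pi^*\oo(\alpha)\otimes\oo_{\tilde X}(-E)$. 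Writing $H=c_1(\tilde\phi^*\oo_{\pp^r}(1))$, this yields the fundamental relation $H=\pi^*\alpha-E$ in $A^1(\tilde X)$. I would also recall Fulton's description of the Segre class via the blow-up, namely $s(V,X_\Sigma)=\pi_*\!\left(\frac{E}{1+E}\right)=\sum_{i\geq 1}(-1)^{i-1}\pi_*(E^i)$.

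The key geometric step is to identify the projective-degree cycles on $\tilde X$. Since $H=\tilde\phi^*h$ for the hyperplane class $h$ on $\pp^r$, we have $H^\iota=\tilde\phi^*[\pp^{r-\iota}]$, and for a general linear subspace $L=\pp^{r-\iota}$ this class is represented by $[\tilde\phi^{-1}(L)]$. Working over $\CC$, Kleiman's transversality theorem applied to the transitive $\mathrm{PGL}_{r+1}$-action on $\pp^r$ guarantees that for general $L$ the scheme $\tilde\phi^{-1}(L)$ is reduced of pure codimension $\iota$ and that $\tilde\phi^{-1}(L)\cap E=(\tilde\phi|_E)^{-1}(L)$ has dimension at most $\dim E-\iota<\dim\tilde\phi^{-1}(L)$; hence no component of $\tilde\phi^{-1}(L)$ is supported on $E$. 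Because $\pi$ is an isomorphism away from $E$ where it agrees with $\phi$, pushing forward gives $\pi_*[\tilde\phi^{-1}(L)]=[\overline{\phi^{-1}(L)}]=[Y_\iota]$, so that $[Y_\iota]=\pi_*\!\big((\pi^*\alpha-E)^\iota\big)$ for every $\iota$. This is precisely where the nef hypothesis is used: on a smooth complete toric variety a divisor class is nef exactly when $\oo(\alpha)$ is globally generated, and this base-point freeness is what licenses the general-position argument and lets the pullbacks of general hyperplanes meet $E$ and one another in the expected dimensions, so that the pushforward recovers the honest cycle rather than an excess class.

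With these two facts in hand the result follows from a formal manipulation in $A^*(X_\Sigma)$. Using the projection formula to pull the class $(1+\alpha)^{-(\iota+1)}$ back along $\pi$, one rewrites
\[
\frac{1}{1+\alpha}\sum_{\iota\geq 0}\frac{[Y_\iota]}{(1+\alpha)^\iota}
=\pi_*\!\left(\frac{1}{1+\pi^*\alpha}\sum_{\iota\geq 0}\left(\frac{\pi^*\alpha-E}{1+\pi^*\alpha}\right)^{\!\iota}\right).
\]
Summing the geometric series, which is finite because $A^*(\tilde X)$ vanishes in codimension exceeding $\dim\tilde X$, collapses the inner factor to $\tfrac{1+\pi^*\alpha}{1+E}$, leaving $\pi_*\!\big(\tfrac{1}{1+E}\big)=\pi_*\!\big(1-\tfrac{E}{1+E}\big)=1-s(V,X_\Sigma)$, where $\pi_*(1)=1$ since $\pi$ is birational. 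Rearranging gives the claimed expression for $s(V,X_\Sigma)$.

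The main obstacle, and the step deserving the most care, is the geometric identity $[Y_\iota]=\pi_*\!\big((\pi^*\alpha-E)^\iota\big)$: one must ensure both that a general linear section pulls back to a cycle of the correct pure codimension and that it contributes no components supported on the exceptional divisor, so that its pushforward is the honest cycle $\phi^{-1}(\pp^{r-\iota})$. Verifying this in the multigraded toric setting is exactly where the nef (equivalently, globally generated) hypothesis on $\alpha$ enters, and I expect this to be the crux of the argument; the remaining manipulation with the projection formula and the geometric series is purely formal.
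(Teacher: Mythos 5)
Your proof is correct and follows essentially the same route as the paper: both resolve $\phi$ by blowing up $V$ (the paper identifies the blow-up with the graph $\Gamma_I$), both use the relation $H=\pi^*\alpha-E$ coming from Fulton \S 4.4 (valid because the nef class $\alpha$ is globally generated), and both conclude via Fulton's formula $s(V,X_{\Sigma})=\pi_*\bigl(E/(1+E)\bigr)$ together with a formal Chow-ring manipulation. The only differences are cosmetic: the paper runs the algebra in the other direction using Aluffi's $\otimes\,\oo(\alpha)$ notation rather than your geometric series plus projection formula, and your Kleiman-transversality verification that $\pi_*(H^{\iota})=[Y_{\iota}]$ corresponds to the paper's definition $[Y_{\iota}]=\pi_*(\mathfrak{h}^{\iota}\cdot[\Gamma_I])$ combined with its Lemma on $Y_{\iota}$.
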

The assumption that $\alpha$ is nef is equivalent to requiring that the subscheme $V$ be the intersection of divisors whose corresponding line bundles are generated by global sections, see Theorem \ref{theorem:nefGlobalSec} and the proceeding remarks for a discussion of this. A smooth complete toric variety is projective if and only if the cone generated by the nef divisors (often called the \textit{nef cone}, ${\rm Nef}(X_{\Sigma})$, of $X_{\Sigma}$) is full dimensional in ${\rm Pic} (X_{\Sigma})$ (see Proposition 6.3.24 of \cite{david2011toric}). Hence if we restrict to $X_{\Sigma}$ a smooth projective toric variety we can always be sure of finding a nef divisor $\alpha$ corresponding to the generators of the ideal $I$ defining $V$. 

The main ingredient needed to apply the result above to give an algorithm to compute the Segre class is an explicit expression for the projective degrees of the rational map $\phi$ in \eqref{eq:phi_intro}. One of the main contributions of this note is the development of such a result in the case where $X_{\Sigma}$ is a smooth projective toric variety. Roughly speaking the idea here is to construct appropriate zero dimensional ideals to compute the projective degrees by finding intersection numbers and using the quotient construction of toric varieties to move the computation to an affine space. See Theorem \ref{theorem:GeoQuoCox} for a review of the quotient construction. 

Let $X_{\Sigma}$ be a smooth projective toric variety. To simplify the statement of the result we first define several terms. Let $R=k[x_{\rho_1},\dots,x_{\rho_m}]$ be the graded total coordinate ring of $X_{\Sigma}$ (where $\Sigma(1)=\left\lbrace \rho_1,\dots, \rho_m \right\rbrace$ are the rays of $\Sigma$) and let $\tilde{R}$ be the ring $k[x_{\rho_1},\dots,x_{\rho_m}]$ without the grading so that $k^m= \Spec(\tilde{R})$. Let $W=V(J)$ be a reduced zero dimensional subscheme of $X_{\Sigma}$ consisting of $q$ points. We refer to an ideal $L_A$ in $\tilde{R}$ as a \textit{dehomogenizing ideal} for $W$ if the intersection $V(J)\cap V(L_A) $ in $k^m$ contains $q$ points.  

Let $n=\dim(X_{\Sigma}$). Again write $[Y_{\iota}]  = \sum_{ i=1}^\mu \gamma_{i}^{(\iota)} \omega_i^{(\iota)} $ where $ \omega_1^{(\iota)}, \dots, \omega_\mu^{(\iota)}$ is a basis of $A^{\iota}(X_{\Sigma})$. Since $X_{\Sigma}$ is a smooth projective toric variety, $A^1(X_{\Sigma})$ will have a basis consisting of nef divisors, see Proposition 6.3.24 of of Cox, Little and Schenck \cite{david2011toric}. Let $b_1,\dots, b_{q} \in A^1(X_{\Sigma})$ denote a fixed nef basis for $A^1(X_{\Sigma})$. Since the divisors $b_j$ are nef we may express the rational equivalence class of a point as a monomial in $b_1,\dots, b_{q}$. In particular let $\zeta=b_1^{c_1}\cdots b_{q}^{c_q}$ denote the rational equivalence class of a point in the dimension zero Chow group, $A_0(X_{\Sigma})$. Similarly we may write the basis elements $ \omega_1^{(\iota)}, \dots, \omega_\mu^{(\iota)}\in A^{\iota}(X_{\Sigma})$ as monomials in $b_1,\dots, b_{q}$. Since the $b_j$ are nef and since $\zeta$ is the class of a point then each exponent of $b_j$ appearing in $\omega_i^{(\iota)}$ must be less or equal to $c_j$, the exponent of $b_j$ in $\zeta$. Hence $\zeta$ is divisible by $\omega_i^{(\iota)}$. We refer to the class $a_i^{(\iota)}=\zeta/\omega_i^{(\iota)}$ as the \textit{complementary cycle} to $\omega_i^{(\iota)}$. For $b\in A^1(X_{\Sigma})$ let $\ell(b)$ be a general form in $R$ with $ [\ell(b)]=b \in A^*(X_{\Sigma})$. Writing $a_i^{(\iota)}=b_1^{j_1}\cdots b_q^{j_q}$ for $b_1,\dots, b_{q} \in A^1(X_{\Sigma})$ let $L_{a_i^{(\iota)}}$ be the ideal generated by $ j_1$ linear forms $\ell(b_1)$, $ j_2$ linear forms $\ell(b_2)$,$\dots$, and $ j_q$ linear forms $\ell(b_q)$. We refer to $L_{a_i^{(\iota)}}$ as the \textit{complementary ideal} associated to the cycle $\omega_i^{(\iota)}$.

The following result gives an expression for the projective degrees of a rational map as the dimension of an explicit quotient ring, this result is proved in Theorem \ref{theorem:sat_1mTMultiProj}. \begin{theorem*} With the notations above we have that the projective degrees are given by $$
\gamma_{i}^{(\iota)}=\dim_k \left( R[T]/ \left(( P_1, \dots, P_{\iota},S)+L_{a_i^{(\iota)}} +L_A \right) \right), 
$$ where the $P_{\ell}$ are general linear combinations of $f_0,\dots, f_r$, $L_{a_i^{(\iota)}}$ is the complementary ideal to $\omega_i^{(\iota)}\in A^{\iota}(X_{\Sigma})$, $L_A$ is a dehomogenizing ideal of $V( P_1, \dots, P_{\iota})\cap V\left( L_{a_i^{(\iota)}} \right)$ and $
S= 1-T\sum_{l=0}^r \vartheta_l f_l$ for general $\vartheta_l \in k$.
\end{theorem*}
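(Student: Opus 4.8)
The plan is to reduce the single coordinate $\gamma_i^{(\iota)}$ to an intersection number on $X_\Sigma$, to represent both cycles in that number by explicit systems of forms in the Cox ring, and then to transport the resulting point count to $k^m=\Spec(\tilde R)$, where the $k$-dimension of a quotient ring counts points. First I would extract $\gamma_i^{(\iota)}$ from the expansion $[Y_\iota]=\sum_k \gamma_k^{(\iota)}\omega_k^{(\iota)}$ by pairing against the complementary cycle $a_i^{(\iota)}=\zeta/\omega_i^{(\iota)}$ under the intersection pairing $A^{\iota}(X_\Sigma)\times A^{n-\iota}(X_\Sigma)\to A^{n}(X_\Sigma)\cong\ZZ\zeta$. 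Here one checks that $\deg(\omega_k^{(\iota)}\cdot a_j^{(\iota)})=\delta_{kj}$: since $\omega_k^{(\iota)}$ and $\omega_j^{(\iota)}$ are nef monomials dividing $\zeta$, the product $\omega_k^{(\iota)}\cdot a_j^{(\iota)}$ is a top-degree monomial in $b_1,\dots,b_q$ which equals $\zeta$ exactly when $k=j$ and is otherwise forced to vanish by the nef monomial structure (as happens for any product of projective spaces). Granting this dual-basis property, $\gamma_i^{(\iota)}=\deg\!\left([Y_\iota]\cdot a_i^{(\iota)}\right)$.

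Next I would produce geometric representatives for the two factors. A general $\pp^{r-\iota}\subset\pp^{r}$ is an intersection of $\iota$ general hyperplanes, and pulling these back along $\phi$ shows that $\phi^{-1}(\pp^{r-\iota})$ is the closure of $V(P_1,\dots,P_\iota)\setminus V(I)$, where the $P_\ell$ are general $k$-linear combinations of $f_0,\dots,f_r$; note that the base locus $V(I)=V(f_0,\dots,f_r)$ is contained in every $V(P_1,\dots,P_\iota)$ and must be removed. The complementary cycle $a_i^{(\iota)}=b_1^{j_1}\cdots b_q^{j_q}$ is represented by $V(L_{a_i^{(\iota)}})$, the common zero locus of $j_1+\cdots+j_q=n-\iota$ general linear forms $\ell(b_j)$ realizing the nef classes. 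A Bertini–Kleiman transversality argument for these general choices then shows that $\overline{V(P_1,\dots,P_\iota)\setminus V(I)}\cap V(L_{a_i^{(\iota)}})$ is a reduced zero-dimensional set $W$ of exactly $\gamma_i^{(\iota)}$ points, all disjoint from $V(I)$.

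It remains to realize this count as the stated $\dim_k$. I would delete the base locus by the Rabinowitsch-style saturation encoded in $S=1-T\sum_{l=0}^r\vartheta_l f_l$: imposing $S=0$ forces $\sum_{l=0}^r\vartheta_l f_l\neq0$, hence (for general $\vartheta_l$) that the $f_l$ do not all vanish, so the locus cut out in $\Spec(\tilde R[T])$ projects isomorphically onto $V(P_1,\dots,P_\iota)\cap V(L_{a_i^{(\iota)}})$ with the base locus removed and the extra coordinate $T$ uniquely determined. Finally, the points produced so far are $G$-orbits in $k^m\setminus Z$ under the geometric quotient $X_\Sigma=(k^m\setminus Z)/G$ of Theorem \ref{theorem:GeoQuoCox}; adjoining the dehomogenizing ideal $L_A$ for the residual set $W$ selects, by its defining property, exactly one affine representative per orbit, so that $V\!\left((P_1,\dots,P_\iota,S)+L_{a_i^{(\iota)}}+L_A\right)$ is a reduced set of $\gamma_i^{(\iota)}$ points in $\Spec(\tilde R[T])$. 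Since the coordinate ring of a reduced zero-dimensional scheme has $k$-dimension equal to its number of points, the claimed formula follows.

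The hard part will be the transversality and reducedness bookkeeping: one must argue that the successive general choices (the combinations defining the $P_\ell$, the linear forms in $L_{a_i^{(\iota)}}$, the scalars $\vartheta_l$ in $S$, and the forms in $L_A$) can be made simultaneously generic so that every intersection is transverse, the residual scheme after deleting the base locus carries no embedded or multiple points, and each $G$-orbit meets $V(L_A)$ in a single reduced point. Equivalently, one must rule out any multiplicities that would make $\dim_k$ overcount and confirm that the saturation deletes $V(I)$ completely rather than leaving residual components; these are precisely the places where the nef hypothesis on $\alpha$ (ensuring the $f_i$ are globally generated, via Theorem \ref{theorem:nefGlobalSec}) and the dual-basis property of the complementary cycles are needed.
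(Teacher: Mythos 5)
Your proposal follows essentially the same route as the paper's proof of Theorem \ref{theorem:sat_1mTMultiProj}: extracting $\gamma_i^{(\iota)}$ by intersecting $[Y_{\iota}]$ with the complementary cycle $a_i^{(\iota)}$ (using $\omega_j^{(\iota)}\cdot a_i^{(\iota)}=0$ for $j\neq i$), representing $Y_{\iota}$ by general linear combinations $P_\ell$ via Lemma \ref{lemma:Yiota} and Kleiman transversality, deleting the base locus $V(I)$ with the Rabinowitsch-style equation $S$ in $X_{\Sigma}\times\mathbb{A}^1$, and then passing to affine space with the dehomogenizing ideal $L_A$ so that $\dim_k$ of the quotient counts the reduced points. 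The transversality and reducedness bookkeeping you flag at the end is resolved in the paper exactly as you suggest: Kleiman's theorem (applicable because the nef hypothesis makes the relevant line bundles globally generated, via Theorem \ref{theorem:nefGlobalSec}) supplies Zariski-open dense sets of admissible choices for the $\lambda_{l,j}$, the forms $\ell(b_j)$, and the scalars $\vartheta_l$, guaranteeing a smooth, reduced, zero-dimensional intersection whose cardinality is independent of the general choices.
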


%To apply this result in practice we may (or may not) want to add additional restrictions on the structure of $X_{\Sigma}$ so that we can be sure of being able to compute all projective degrees associated to subschemes of $X_{\Sigma}$. To ensure, for example, that we can always construct complementary ideals satisfying the hypothesis above to compute the projective degrees associated to a subscheme of $X_{\Sigma}$ it would be sufficient to require that we restrict to projective toric varieties $X_{\Sigma}$. In particular we would like there to exist a basis for $A^1(X_{\Sigma})$ consisting of nef divisors. Restricting to projective toric varieties gives such a basis since a smooth complete toric variety is projective if and only if the cone generated by the nef divisors (often called the \textit{nef cone}, ${\rm Nef}(X_{\Sigma})$, of $X_{\Sigma}$) is full dimensional in ${\rm Pic} (X_{\Sigma})$ (see Proposition 6.3.24 of \cite{david2011toric}). In particular we could restate the theorem above requiring that $X_{\Sigma}$ is a smooth projective toric variety and removing the requirement that $b_1,\dots, b_{q} \in A^1(X_{\Sigma})$ be nef, since this would then be automatic.

Note that the above result does not specify how to construct the required dehomogenizing ideal. Hence the other desirable ingredient for algorithmic usage is an explicit result which can be used to construct a dehomogenizing ideal in a simple and algorithmic manner for \textit{any} zero dimensional subscheme which is the intersection of nef divisors. Such a result is proved in Theorem \ref{theorem:countingPoints} below where we give a explicit expression for a \textit{general dehomogenizing ideal} for any (reduced) zero dimensional subscheme of $X_{\Sigma}$ provided that $X_{\Sigma}$ satisfies what we call the {affine codimension condition}. Explicitly we say that a toric variety $X_{\Sigma}$ satisfies the \textit{affine codimension condition} if the number of primitive collections of the fan $\Sigma$ is equal to $ m-n$ where $n=\dim(X_{\Sigma})$ and $m$ is the number of generating rays of $\Sigma(1)$ (equivalently $m$ is the number of variables in the Cox ring of $X_{\Sigma}$).
 
In this setting we give algorithms to compute the Segre class $s(V,X_{\Sigma})$, the Chern-Schwartz-MacPherson class $c_{SM}(V)$ and the Euler characteristic $\chi(V)$. A third algorithm to compute the $c_{SM}$ class in the special case where $V$ can be seen as a hypersurface in some smooth complete intersection subscheme of $X_{\Sigma}$ is also given. This third algorithm offers performance improvements in some cases by eliminating the need to perform inclusion/exclusion (Proposition \ref{propn:csm_higher_codim}), see \S\ref{subsection:CSM_Complete_int} for more details.

We note that arbitrary products of projective spaces $\pp^{n_1} \times \cdots \times \pp^{n_j}$ are projective (and hence have a basis for $A^1(X_{\Sigma})\cong {\rm Pic} (X_{\Sigma})$ consisting of nef divisors) and satisfy the affine codimension condition above. The affine codimension condition is also satisfied by many other projective toric varieties. For example, of the 124 unique smooth toric Fano fourfolds $42$ satisfy the affine codimension condition, this condition also holds for 205 of the 866 smooth Fano toric varieties with $\dim(X_{\Sigma})=5$ and for 1152 of the 7622 smooth Fano toric varieties with $\dim(X_{\Sigma})=6$ (these can be found using the \textsf{smoothFanoToricVariety} function in Macaulay2 \cite{M2}, see also {\O}bro \cite{obro2008classification}). For these varieties, since they satisfy the affine codimension condition, we may use Theorem \ref{theorem:countingPoints} to obtain a general dehomogenizing ideal and use this to compute characteristic classes for any subscheme of $X_{\Sigma}$ (which is the intersection of hypersurfaces corresponding to nef divisors). For toric varieties not satisfying the affine codimension condition we may still use the results of Theorems \ref{theorem:SegreMultiProj} and \ref{theorem:sat_1mTMultiProj} to compute the Segre class of a subscheme, however we would need to construct a dehomogenizing ideal via a different method.  

The last result proved is Theorem \ref{theorem:csm_complete_int_multi_proj}, this result provides a theoretical basis for an algorithm to compute $c_{SM}$ classes without using the expensive inclusion/exclusion procedure (Proposition \ref{propn:csm_higher_codim}) in some cases. This result, in particular, gives an explicit expression for the $c_{SM}$ class when the subscheme $V$ of $X_{\Sigma}$ can be seen as a hypersurface in a smooth complete intersection subscheme of $X_{\Sigma}$. 

The algorithm presented here to compute Segre classes of subschemes of toric varieties offers substantial performance improvements over the previous algorithm of Moe and Qviller \cite{moe2013segre} which is applicable in a setting similar to that presented here. The algorithm of \cite{moe2013segre} is a generalization of the previous algorithm of Eklund, Jost and Peterson \cite{Jost} and works by using appropriate saturations to compute the ideals of certain residual schemes and then computing their multi-degrees. The key advantage of our algorithm (Algorithm \ref{algorithm:SegreAlgMultiProj}) likely comes from the result of Theorem \ref{theorem:sat_1mTMultiProj} (combined with Theorem \ref{theorem:SegreMultiProj}) since this theorem reduces the problem of computing the Segre class of a subscheme to that of finding the number of solutions to certain zero dimensional polynomial systems; a problem for which there are many effective algorithms. See \S\ref{subsection:MoeQviller} and \S\ref{subsection:PerformenceMultiProj} for further discussions. 

In the context of computing Segre classes the recent algorithm of Harris \cite{harris2015computing} will allow for the computation of certain Segre classes in the Chow ring of projective space not encompassed by the current work, however the algorithm presented here has the advantage of working directly in the toric variety $X_{\Sigma}$, rather than via an embedding. From a practical standpoint, using the algorithm of \cite{harris2015computing} on even simple examples such as subschemes of a product of projective spaces would have a significant added cost due to the fact that one would need to work in an ambient projective space via the Segre embedding. For example considering subschemes $V$ of $\pp^3 \times \pp^3 \times \pp^3$ using the Segre embedding to compute $s(V,\pp^3 \times \pp^3 \times \pp^3)$ would require working in a polynomial ring with $4 \cdot 4 \cdot 4=64$ variables; working in the Cox ring of $\pp^3 \times \pp^3 \times \pp^3$, as we do here, gives a polynomial ring in $4+4+4=12$ variables. Since algebraic methods of all types are heavily dependent on the number of variables in the polynomial rings being considered this is a very substantial difference.

It should be noted that all algorithms presented in this note are probabilistic; this is because they involve a general choice of some scalars. More precisely, in the sense of algebraic geometry and using terminology from books such as Sommese and Wampler \cite{sommese2005numerical}, Algorithms \ref{algorithm:SegreAlgMultiProj}, \ref{algorithm:csm_InExMultiProj}, and \ref{algorithm:csm_Complete_Int_multi_proj} are probability one algorithms as they will return the correct result for any choice of objects within an open dense Zariski set in the associated parameter space. In practice, however, we need to make random choices from finite (albeit large) sets of either integers or rational numbers for computer implementations. A detailed probabilistic analysis of the projective case of these algorithms is carried out by the author in \cite[\S 3.2, \S 3.4]{HelmerTCS}. Let $\mathcal{S}$ denote a finite subset of our coefficient field $k$ from which we choose random elements in our algorithm. In \cite[Proposition 3.6]{HelmerTCS} a probability bound is given on the number elements needed in the set $\mathcal{S}$ to ensure a probability of failure less then a given value. In particular the probability of failure may be made arbitrarily small given a sufficiently large set $\mathcal{S}$. In practice, for computations of Segre classes of subschemes of $\pp^n$, choosing scalars from a set of $32749$ elements yielded no errors in over $10000$ trials. Based on experimental results and our experience we believe the more general case presented here will have similar probabilistic behaviour.

This note will be organized as follows. In \S\ref{section:setting_problem}, we begin by precisely stating the problem to be considered and the setting in which we shall work. Following this, we review several previous results and constructions which are important for this work. Previous algorithms to compute characteristics classes are also reviewed in \S\ref{subsection:MoeQviller}. 

The main results of this note are proved in \S\ref{section:mainresultsMultiProj}. In \S\ref{section:SegreCSMMultiProj} we apply the results of \S\ref{section:mainresultsMultiProj} to construct explicit algorithms to compute the Segre and Chern-Schwartz-MacPherson classes and the Euler characteristic of subschemes of $X_{\Sigma}$. The presentation in \S\ref{section:SegreCSMMultiProj} is restricted to the case where $X_{\Sigma}$ is a projective toric variety satisfying the affine codimension condition. Our algorithm to compute Segre classes of arbitrary subschemes $V$ of $X_{\Sigma}$ is given in Algorithm \ref{algorithm:SegreAlgMultiProj}. In Algorithm \ref{algorithm:csm_InExMultiProj} we present an algorithm to compute $c_{SM}(V)$ and $\chi(V)$ in the toric setting using the inclusion/exclusion property of $c_{SM}$ classes (see Proposition \ref{propn:csm_higher_codim}). In Algorithm \ref{algorithm:csm_Complete_Int_multi_proj} we present an algorithm to compute the $c_{SM}$ class of certain complete intersection subschemes of $X_{\Sigma}$ without using inclusion/exclusion, eliminating the inclusion/exclusion procedure often speeds up computation. Algorithm \ref{algorithm:csm_Complete_Int_multi_proj} is based on Theorem \ref{theorem:csm_complete_int_multi_proj}.

In \S\ref{subsection:PerformenceMultiProj} we discuss the performance of these algorithms. The running times of our test implementation on a variety of examples are given in \S\ref{subsection:runtimeTests} and are compared with those of other known algorithms where possible. In all cases the algorithms presented here offer improved performance in comparison to existing algorithms. Running time bounds for Algorithm \ref{algorithm:SegreAlgMultiProj} and Algorithm \ref{algorithm:csm_InExMultiProj} are given in \S\ref{subsection:RunTimeBoundMultiProj}.

The Macaulay2 \cite{M2} implementation of the Algorithms \ref{algorithm:SegreAlgMultiProj}, \ref{algorithm:csm_InExMultiProj}, and \ref{algorithm:csm_Complete_Int_multi_proj} can be found at \newline \url{https://github.com/Martin-Helmer/char-class-calc-toric}.

\begin{example}
Let $k$ be an algebraically closed field of characteristic zero and let $V=V(I)$ be the subvariety of $\pp^4\times \pp^2\cong \Proj(k[x_0,\dots,x_4]) \times \Proj(k[y_0,\dots,y_2]) $ defined by the ideal $$I=\left( 17 {x}_{0} y_0 y_2-3 {x}_{1} y_0 y_2+9 {x}_{3} y_0 y_2,5 {x}_{1} y_2^{2}+{x}_{3} y_2^{2}-3 {x}_{4}
      y_2^{2},-4 {x}_{1} y_0^{2}+7 {x}_{2} y_0^{2}+12 {x}_{3} y_0^{2}\right)$$ in $R=k[x_0,x_1,x_2,x_3,x_4,y_0,y_1,y_2]$. Also let $A^*(\pp^4 \times \pp^2) \cong \ZZ[h_1,h_2]/(h_1^{5},h_2^3)$ be the Chow ring of $\pp^4\times \pp^2$. $V$ is singular with $\codim(V)=2$ and $V$ is not a complete intersection. Using Algorithm \ref{algorithm:SegreAlgMultiProj} with input $I$ we compute the Segre class $s(V,\pp^4 \times \pp^2)$ in  $A^*(\pp^4\times \pp^2)$ to be\begin{align*}
s(V,\pp^4 \times \pp^2)= &-300 {h}_{1}^{4} {h}_{2}^{2}+40 {h}_{1}^{4} {h}_{2}+80 {h}_{1}^{3} {h}_{2}^{2}-3 {h}_{1}^{4}-12 {h}_{1}^{3} {h}_{2}+{h}_{1}^{3}
-12 {h}_{1} {h}_{2}^{2}+2
      {h}_{1} {h}_{2}+4 {h}_{2}^{2}.
\end{align*}Using Algorithm \ref{algorithm:csm_InExMultiProj} with input $I$ we obtain the Chern-Schwartz-MacPherson class \small{\begin{align*}
c_{SM}(V)= &13 {h}_{1}^{4} {h}_{2}^{2}+10 {h}_{1}^{4} {h}_{2}+22 {h}_{1}^{3} {h}_{2}^{2}+2 {h}_{1}^{4}+13 {h}_{1}^{3} {h}_{2}+18 {h}_{1}^{2} {h}_{2}^{2}+{h}_{1}^{3}+8
      {h}_{1}^{2} {h}_{2}+7 {h}_{1} {h}_{2}^{2}+2 {h}_{1} {h}_{2}+{h}_{2}^{2} 
\end{align*}}\normalsize in $A^* (\pp^4\times \pp^2 )$ and also obtain the Euler characteristic $
\chi(V)= 13.
$  \label{example:csm_ex_multi_proj}
\end{example}

\section{Setting, Review and Problem}
\label{section:setting_problem}
For the algorithmic portions of this note we primarily consider ambient spaces which are smooth projective toric varieties. For some, but not all, of the results projective can be replaced by complete. We also (primarily) consider subschemes of toric varieties over the complex numbers and take $k=\CC$. This is done because several of the results of Cox, Little, and Schenck \cite{david2011toric} which we use are stated in this setting. If the toric variety $\pp^{n_1} \times \cdots \times \pp^{n_j} $ is the ambient space we could work instead over $k$ any algebraically closed field of characteristic zero. We note that MacPherson's original construction \cite{macpherson1974chern} of the $c_{SM}$ class was over $\CC$, however subsequent constructions such as Kennedy \cite{kennedy1990macpherson} or Aluffi \cite{aluffi2006classes} require only an algebraically closed field of characteristic zero.

Let $X_{\Sigma}$ be a smooth projective toric variety of dimension $n$ with homogeneous coordinate ring $R$. Let $V=V(I)$ be any subscheme of $X_{\Sigma}$ defined by an ideal $I$ in $R$ which is homogeneous with respect to the grading on $R$. The problem we consider in this note is the following: determine the Segre class of $V$ in $X_{\Sigma}$, $s(V,X_{\Sigma})$, the Chern-Schwartz-MacPherson class of $V$, $c_{SM}(V)$, and the Euler characteristic of $V$, $\chi(V)$, in a time efficient manner on a computer algebra system. 

We will represent all characteristic classes as elements of the Chow ring of $X_{\Sigma}$, $A^*(X_{\Sigma})$. Proposition \ref{propn:ChowRingDef} gives the concrete realization of $A^*(X_{\Sigma})$ which will be used for all algorithms in this note. We abuse notation and write $s(V,X_{\Sigma})$, and $ c_{SM}(V)$ for the pushforwards to $X_{\Sigma}$ of the Segre class of $V$, and the $c_{SM}$ class of $V$, respectively. 

\subsection{The Segre Class}\label{subsection:settinSegre}

The Segre class is an important invariant in intersection theory, both because it contains important intersection theoretic information and because it can be used to construct other commonly studied structures and invariants. In particular the Chern-Fulton class (see \eqref{eq:CF_def}) and the Chern-Schwartz-MacPherson class (see Proposition \ref{propn:AluffiGeneralCSMSegre_Relation}) may be defined in terms of Segre classes.

For $V$ a proper closed subscheme of a variety $W$, we may define the Segre class of $V$ in $W$ as \begin{equation}
s(V,W)= \sum_{j\geq 1}(-1)^{j-1}\eta_*(\tilde{V}^j) =\eta_* \left( \frac{[\tilde{V}]}{1+[\tilde{V}]}\right) \in A^*(V) \label{eq:general_SegreDef}
\end{equation}
 where $\tilde{V}$ is the exceptional divisor of the blow-up of $W$ along $V$, $Bl_VW$, $\eta: \tilde{V} \to V$ is the projection (and $\eta_*$ is its pushforward), the class $\tilde{V}^k$ is the $k$-th self intersection of $\tilde{V}$, and $[\tilde{V}]$ is the class of $\tilde{V}$ in the Chow ring of the blow-up, $A^*(Bl_VW)$. See Fulton \cite[\S 4.2.2]{fulton} for further details.  
 
The total Chern class of a $j$-dimensional nonsingular variety $V$ is defined as the Chern class of the tangent bundle $T_V$; we express this as $c(V)=c(T_V) \cap [V]$ in the Chow ring of $V$, $A^*(V)$. A definition of the Chern class of a vector bundle can be found in Fulton \cite[\S3.2]{fulton}. If $V$ is a smooth projective varitey the degree of the zero dimensional component of the total Chern class of $V$ is equal to the topological Euler characteristic (this follows from the Gauss-Bonnet-Chern theorem, see, for example, \cite[Theorem 5.21]{eisenbud20163264}), that is \begin{equation}
\int c(T_V) \cap [V]=\chi(V). \label{eq:chern_euler_non_singular}
\end{equation}Here we let $\int \alpha$ denote the degree of the zero dimensional component of the class $\alpha \in A_*(V)$, that is the degree (i.e.\ the coefficient if $\alpha$ has only one term) of the part of $\alpha$ in the dimension zero Chow group $A_0(V)$, see Fulton \cite[Definition 1.4]{fulton} for more details. 

Any algorithm to compute the Segre class will immediately give us an algorithm to compute the Chern-Fulton class $c_F$ (refered to as the Canonical class by Fulton \cite{fulton}) of a subscheme $V$ of a smooth variety $M$ over an algebraically closed field. Specifically we have that \begin{equation}
c_F(V)=c(T_M) \cap s(V,M) \in A^*(M). \label{eq:CF_def}
\end{equation} The Chern-Fulton class $c_F$ is a generalization of the Chern class to singular schemes, see Fulton \cite[Examples 4.2.6, 19.1.7]{fulton}. In particular if $V$ is a smooth subscheme of $M$, any method to compute the Segre class also gives the total Chern class (of the tangent bundle), i.e.\ $$c(V)=c(T_V)\cap [V]=c(T_M) \cap s(V,M).$$ 
\subsection{The Chern-Schwartz-MacPherson Class}\label{subsection:settingCSM}
While there are several generalizations of the total Chern class to singular varieties, all of which agree with $c(T_V) \cap [V]$ for nonsingular $V$, the Chern-Schwartz-MacPherson class is the only generalization which satisfies a property analogous to (\ref{eq:chern_euler_non_singular}) for any $V$, i.e. \begin{equation}
\int c_{SM}(V)=\chi(V). \label{eq:csm_euler}
\end{equation} 

Here we briefly recall the functorial definition of $c_{SM}$ classes arising from MacPherson's results in \cite{macpherson1974chern} (see also Kennedy \cite{kennedy1990macpherson} and Aluffi \cite{aluffi2006classes}). For a scheme $V$, we take $\mathcal{C}(V )$ to be the abelian group of finite linear combinations $\sum_W \mathfrak{m}_W \mathbf{1}_W$, where $W$ are (closed) subvarieties of $V$, $\mathfrak{m}_W \in \ZZ$, and $\mathbf{1}_W$ denotes the function that is $1$ in $W$, and $0$ outside of $W$. Elements $f\in \mathcal{C}(V )$ are referred to as constructible functions and the group $\mathcal{C}(V )$ is termed the group of constructible functions on $V$. We may make $\mathcal{C}$ into a functor by letting $\mathcal{C}$ map a scheme $V$ to the group of constructible functions on $V$ and letting $\mathcal{C}$ map a proper morphism $f: V_1 \to V_2$ to $$\mathcal{C}(f)(\mathbf{1}_W)(p)=\chi(f^{-1}(p) \cap W), \;\;\; W \subset V_1, \; p\in V_2 \; \mathrm{a \; closed \; point}.$$ 

The Chow group functor $\mathcal{A}_*$ is also a functor from algebraic varieties to abelian groups. The $c_{SM}$ class arises from a natural transformation between these two functors; we abuse notation slightly and denote both the natural transformation and the associated class as $c_{SM}$. 
\begin{defn}
The Chern-Schwartz-MacPherson class is characterized by the unique natural transformation between the constructible function functor and the Chow group functor, that is $c_{SM}: \mathcal{C}\to \mathcal{A}_*$ is the unique natural transformation satisfying: \begin{itemize}
\item (\textit{Normalization}) $ c_{SM}(\mathbf{1}_V)=c(T_V) \cap [V] $ for $V $ non-singular  and complete.
\item (\textit{Naturality}) $f_{*}(c_{SM}(\phi))=c_{SM}(\mathcal{C}(f)(\phi))$, for $f:X \to Y$ a proper transformation of projective varieties, $\phi$ a constructible function on $X$. \label{defn:csm_natural_transform}
\end{itemize}
\end{defn}
In all that follows we always consider the $c_{SM}$ class as an element of the Chow group of some ambient smooth variety $M$. More precisely for $V$ a subscheme of a smooth variety $M$ we think of $c_{SM}(V)$ as an element of $A^*(M)$. Let $V_{red}$ denote the support of the scheme $V$, the notation $c_{SM}(V)$ is taken to mean $c_{SM}(\mathbf{1}_V)$ and hence, since $\mathbf{1}_V=\mathbf{1}_{V_{red}} $, we write $c_{SM}(V)=c_{SM}(V_{red}) $. 

The $c_{SM}$ class satisfies the same inclusion/exclusion relation as the Euler characteristic. That is for $V_1,V_2$ subschemes of a smooth variety $M$ we have that
 \begin{equation}
c_{SM}(V_1 \cap V_2)=c_{SM}(V_1)+c_{SM}(V_2)-c_{SM}(V_1 \cup V_2 )\;\;\; {\rm in} \; A^*(M).
\label{eq:csm_inclusion_exclusion}
\end{equation}

Note that this relation for $c_{SM}$ classes will allow us to reduce all computation of $c_{SM}$ classes to the case of hypersurfaces. From this we obtain the following proposition (see also Aluffi \cite{aluffi2003computing}).

\begin{propn}
Let $	V=X_0 \cap \cdots \cap X_{r} = V(f_0) \cap \cdots \cap V(f_r)$ be a subscheme of a smooth variety $M$ with coordinate ring $R$. Write the polynomials defining $V$ as $F=(f_0,\dots , f_r) \in R$ and let $F_{ \left\lbrace S \right\rbrace } = \prod_{i \in S} f_i $ for $S \subset \left\lbrace 1, \dots , r\right\rbrace$ . Then, $$
c_{SM}(V)= \sum_{S \subset \left\lbrace 1, \dots , r\right\rbrace} (-1)^{|S|+1}c_{SM} \left(V( F_{\left\lbrace S \right\rbrace } )\right) \;\;\; {\rm in} \; A^*(M)
$$  where $|S|$ denotes the cardinality of the integer set $S$. \label{propn:csm_higher_codim}
\end{propn}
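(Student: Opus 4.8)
The plan is to reduce the statement to a purely combinatorial identity among the associated constructible functions and then invoke the fact that $c_{SM}$ is a homomorphism of abelian groups on constructible functions. Since $c_{SM}\colon \mathcal{C}\to \mathcal{A}_*$ is a natural transformation of functors valued in abelian groups, for any smooth $M$ the induced map $c_{SM}\colon \mathcal{C}(M)\to A^*(M)$ is $\ZZ$-linear; combined with the conventions $c_{SM}(W)=c_{SM}(\mathbf{1}_W)$ and $c_{SM}(W)=c_{SM}(W_{red})$ (the latter because $\mathbf{1}_W=\mathbf{1}_{W_{red}}$), this $\ZZ$-linearity is the only property of $c_{SM}$ the argument will use. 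Let $\mathcal{I}=\{0,\dots,r\}$ denote the index set.

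First I would record the geometric translation of the terms on the right-hand side. For a nonempty $S\subseteq \mathcal{I}$ one has, set-theoretically, $V(F_{\{S\}})=V\left(\prod_{i\in S} f_i\right)=\bigcup_{i\in S} V(f_i)$, since a product of regular functions vanishes at a point exactly when one of its factors does. Hence $\mathbf{1}_{V(F_{\{S\}})}=\mathbf{1}_{\bigcup_{i\in S} V(f_i)}$ in $\mathcal{C}(M)$, and after applying $c_{SM}$ term by term the claimed formula is equivalent to the constructible function identity
\[
\mathbf{1}_{\bigcap_{i\in \mathcal{I}} V(f_i)} \;=\; \sum_{\emptyset \neq S\subseteq \mathcal{I}} (-1)^{|S|+1}\, \mathbf{1}_{\bigcup_{i\in S} V(f_i)}.
\]

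Next I would verify this identity pointwise, which is the elementary heart of the argument. Fix a closed point $p\in M$ and set $T=\{i\in \mathcal{I} : p\in V(f_i)\}$. The left-hand side evaluates to $1$ precisely when $T=\mathcal{I}$ and to $0$ otherwise, while $\mathbf{1}_{\bigcup_{i\in S} V(f_i)}(p)=1$ exactly when $S\cap T\neq \emptyset$, so the right-hand side equals $\sum_{\emptyset \neq S,\, S\cap T\neq \emptyset}(-1)^{|S|+1}$. Using $\sum_{\emptyset \neq S\subseteq \mathcal{I}}(-1)^{|S|+1}=1$ (from $(1-1)^{|\mathcal{I}|}=0$) together with the same identity applied on $\mathcal{I}\setminus T$, this sum collapses to $1$ when $T=\mathcal{I}$ and to $0$ otherwise, matching the left-hand side. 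Applying the $\ZZ$-linear map $c_{SM}\colon \mathcal{C}(M)\to A^*(M)$ to the verified identity then yields the proposition.

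The only point to guard against is scheme-theoretic: $V(F_{\{S\}})$ may be nonreduced, but this is harmless precisely because $c_{SM}(V(F_{\{S\}}))=c_{SM}(\mathbf{1}_{V(F_{\{S\}})})$ depends only on the support. I expect no genuine obstacle, as the whole argument rests on $c_{SM}$ obeying inclusion/exclusion, which is exactly its $\ZZ$-linearity on $\mathcal{C}$ and is already visible in the two-set relation \eqref{eq:csm_inclusion_exclusion}. Indeed, one could alternatively obtain the formula by induction on the number of hypersurfaces using \eqref{eq:csm_inclusion_exclusion} repeatedly; the constructible-function computation above is preferable mainly because it handles all $2^{|\mathcal{I}|}-1$ terms simultaneously rather than bookkeeping nested unions.
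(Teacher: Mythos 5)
Your proof is correct and is essentially the paper's own argument: the paper deduces this proposition directly from the two-set relation \eqref{eq:csm_inclusion_exclusion}, which is exactly the two-term case of the constructible-function identity you verify pointwise, so your one-shot computation (or the induction you mention as an alternative) is just that derivation carried out in full detail via the $\ZZ$-linearity of $c_{SM}$ on $\mathcal{C}(M)$. You also rightly read the summation set as the full index set $\left\lbrace 0,\dots,r\right\rbrace$ of the generators (the paper's $\left\lbrace 1,\dots,r\right\rbrace$ is a harmless indexing slip) and correctly dispatch the possible nonreducedness of $V(F_{\left\lbrace S\right\rbrace })$ via the convention $\mathbf{1}_{V}=\mathbf{1}_{V_{red}}$.
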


\label{section:ch3Reveiw}

\subsection{The Graded Total Coordinate Ring} \label{subsection:homhgeneCoordRing}
In this subsection we briefly review some notation and results regarding the quotient construction of a toric variety. We will make extensive use of this construction throughout this note, for a more detailed review see, for example, Cox, Little, and Schenck \cite[\S5.1,\S5.2]{david2011toric}.

We restrict to the case where $X_{\Sigma}$ is a smooth complete toric variety defined by a fan $\Sigma \subset N\cong \ZZ^l$. Let $\Sigma(1)$ denote the set of one dimensional cones, also referred to as rays, in the fan $\Sigma$. The homogeneous coordinate ring of $X_{\Sigma}$ is given by $
R=\CC[x_{\rho} \; | \; \rho \in \Sigma(1) ]
$. $R$ is also referred to as the \textit{Cox ring} or \textit{total coordinate ring}. We grade the ring $R$ by defining the \textit{degree} of a monomial $
\mathsf{x}=\prod_{\rho \in \Sigma(1)}x_{\rho}^{a_{\rho}}
$ to be $
\deg(\mathsf{x})=\left[ V\left( \mathsf{x} \right) \right] \in A^1(X_{\Sigma}).
$ With this grading, if we set $
R_{\alpha}=\bigoplus_{\deg(\mathsf{x})=\alpha} \CC \cdot \mathsf{x}$ then we have that $R=\bigoplus_{\alpha \in A^1(X_{\Sigma})} R_{\alpha}.$ Additionally $R_{\alpha}\cdot R_{\beta}\subseteq R_{\alpha+\beta}$.

For a cone $\sigma \in \Sigma$ take $\sigma(1)=\left\lbrace \rho \in \Sigma(1) \; | \; \rho \mathrm{\; is \; a \; face \; of\;} \sigma \right\rbrace$ to be the set of one-dimensional faces of $\sigma$.
We may define the \textit{irrelevant ideal} of the coordinate ring $R$ as the ideal \begin{equation}
B= \left( \prod_{\rho \notin \sigma(1)} x_{\rho} \; | \; \sigma \in \Sigma \right) \subset R.
\end{equation}
Let $\CC^{\Sigma(1)}=\Spec(\CC[x_{\rho}\; | \; \rho \in \Sigma(1)])$ be an affine space. Define the group $G=\Hom_{\ZZ}(A^1(X_{\Sigma}),\CC^*).$  \begin{theorem}[Theorem 5.1.11 of Cox, Little, and Schenck \cite{david2011toric}]
Let $X_{\Sigma}$ be a smooth complete toric variety. Then $X_{\Sigma}=(\CC^{\Sigma(1)}-V(B))/G$ is the geometric quotient of $\CC^{\Sigma(1)}-V(B)$ by $G$. \label{theorem:GeoQuoCox}
\end{theorem}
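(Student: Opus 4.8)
Since this statement is quoted verbatim from Cox, Little, and Schenck, I would reconstruct the argument of that reference: realize $X_\Sigma$ by gluing affine GIT quotients and then upgrade the resulting categorical quotient to a geometric one using smoothness. First I would set up the defining short exact sequence of the quotient construction. Let $M=\Hom_{\ZZ}(N,\ZZ)$ be the character lattice and for each ray $\rho\in\Sigma(1)$ let $u_\rho$ be its primitive generator. Because $X_\Sigma$ is complete the rays span $N_{\mathbb{R}}$, so the map $M\to\ZZ^{\Sigma(1)}$, $m\mapsto(\langle m,u_\rho\rangle)_\rho$, is injective and fits into
\[
0\to M\to \ZZ^{\Sigma(1)}\to A^1(X_\Sigma)\to 0,
\]
where the right-hand map sends $e_\rho$ to $\deg(x_\rho)=[V(x_\rho)]$. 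Applying the exact functor $\Hom_{\ZZ}(-,\CC^*)$ then yields $1\to G\to(\CC^*)^{\Sigma(1)}\to T_N\to 1$, with $T_N=\Hom_{\ZZ}(M,\CC^*)$ the torus of $X_\Sigma$; this exhibits $G=\Hom_{\ZZ}(A^1(X_\Sigma),\CC^*)$ as the subgroup of the big torus $(\CC^*)^{\Sigma(1)}$ acting on $\CC^{\Sigma(1)}$ by coordinatewise scaling.

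Next I would argue cone by cone. For each maximal cone $\sigma\in\Sigma$ put $x^{\hat\sigma}=\prod_{\rho\notin\sigma(1)}x_\rho$; these monomials generate the irrelevant ideal $B$, so the principal open sets $(\CC^{\Sigma(1)})_{x^{\hat\sigma}}=\CC^{\Sigma(1)}-V(x^{\hat\sigma})$ are $G$-invariant and together cover exactly $\CC^{\Sigma(1)}-V(B)$, since $V(B)=\bigcap_\sigma V(x^{\hat\sigma})$. The crux of the proof is to identify the affine GIT quotient of each piece with the toric chart $U_\sigma=\Spec(\CC[\sigma^\vee\cap M])$ by computing a ring of invariants:
\[
\bigl(\CC[x_\rho:\rho\in\Sigma(1)]_{x^{\hat\sigma}}\bigr)^{G}\;\cong\;\CC[\sigma^\vee\cap M].
\]
Here a Laurent monomial $\prod_\rho x_\rho^{a_\rho}$ is $G$-invariant precisely when its exponent vector lies in the image of $M$ (by exactness of the dualized sequence), and requiring regularity on $(\CC^{\Sigma(1)})_{x^{\hat\sigma}}$ — allowing negative exponents only on the variables $x_\rho$ with $\rho\notin\sigma(1)$ — cuts this lattice down to exactly the semigroup $\sigma^\vee\cap M$. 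This explicit semigroup identification, where the combinatorics of the fan enters, is the step I would treat most carefully.

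Having matched the local quotients with the affine charts, I would check that the local quotient morphisms agree on overlaps: $(\CC^{\Sigma(1)})_{x^{\hat\sigma}}\cap(\CC^{\Sigma(1)})_{x^{\hat\tau}}$ corresponds to the common face $\sigma\cap\tau$ and hence to the standard gluing of $U_\sigma$ and $U_\tau$ inside $X_\Sigma$. They therefore glue to a single $G$-invariant morphism $\pi:\CC^{\Sigma(1)}-V(B)\to X_\Sigma$ that is a categorical quotient. The remaining obstacle is to promote \emph{categorical} to \emph{geometric}: I would show that over $\CC^{\Sigma(1)}-V(B)$ each fiber of $\pi$ is a single $G$-orbit and each such orbit is closed. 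This is exactly where smoothness is indispensable — smoothness forces each cone $\sigma$ to be generated by part of a $\ZZ$-basis of $N$, which makes $G$ act with trivial stabilizers (for merely simplicial fans one would only get finite stabilizers and an orbifold quotient) and ensures the orbits in the nonvanishing locus are closed. Once the orbits are closed and separate the fibers, the categorical quotient is automatically geometric, finishing the proof. I expect the invariant-ring computation and the orbit-closedness verification to be the two genuine technical points; the gluing is then formal.
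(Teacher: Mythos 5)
The paper does not prove this statement at all; it is quoted verbatim, with citation, from Cox--Little--Schenck, and your reconstruction follows that reference's own argument (the dualized divisor class sequence exhibiting $G$ inside $(\CC^*)^{\Sigma(1)}$, the cone-by-cone invariant-ring computation $\bigl(\CC[x_\rho:\rho\in\Sigma(1)]_{x^{\hat\sigma}}\bigr)^G\cong\CC[\sigma^\vee\cap M]$, gluing over common faces, then the orbit analysis upgrading categorical to geometric), so in substance your proof matches the only proof on record, and its main steps are correct. One inaccuracy is worth fixing: the geometric (as opposed to merely good categorical) quotient property is equivalent to $\Sigma$ being \emph{simplicial}, not smooth --- this is precisely the dichotomy in CLS Theorem 5.1.11 --- so your claim that smoothness is ``indispensable'' at that step, and that simplicial fans would yield only an orbifold quotient rather than a geometric one, overstates the role of smoothness: finite stabilizers are entirely compatible with a geometric quotient, and the orbit-closedness and fiber-equals-orbit verifications need only simpliciality. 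What smoothness actually buys is that the generators of each cone extend to a $\ZZ$-basis of $N$, which makes the $G$-action \emph{free} on $\CC^{\Sigma(1)}-V(B)$; under the smooth hypothesis of the statement your argument is of course valid, since smooth implies simplicial, but the attribution of which hypothesis drives which conclusion should be corrected.
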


Hence we may regard elements of $\CC^{\Sigma(1)}-V(B)$ as ``homogeneous coordinates" for $X_{\Sigma}$. We now consider the structure of $V(B)$ and $G$ in greater detail. We say the collection $v_{\rho_1}, \dots ,v_{\rho_s}$ of ray generators (i.e.\ $\rho_i=\left\langle v_{\rho_i}\right\rangle$) is a \textit{primitive collection} (see \cite[Definition 5.1.5, Proposition 5.1.6]{david2011toric}) if the collection $v_{\rho_1}, \dots ,v_{\rho_s}$ does not lie in any cone $\sigma \in \Sigma$ but every proper subset does. We may write an irreducible decomposition of $V(B)$ as \begin{equation}
V(B)= \bigcup_{ v_{\rho_1}, \dots ,v_{\rho_s} \; \mathrm{primitive}} V(x_{\rho_1}, \dots, x_{\rho_s}).\label{eq:irred_decomp_Vanishing_irel_ideal}
\end{equation}
To simplify terminology we will also refer to the collection of rays ${\rho_1}, \dots ,{\rho_s}$ as a primitive collection if the associated ray generators $v_{\rho_1}, \dots ,v_{\rho_s}$ form a primitive collection. 

\subsection{The Chow Ring of a Smooth Complete Toric Variety} \label{subsection:ChowRingSmoothToric}
Let $k=\CC$. The following proposition gives us a simple method to compute the Chow ring of a smooth, complete toric variety $X_{\Sigma}$. We use this result to compute the Chow ring $A^*(X_{\Sigma})$ in the algorithms of \S\ref{section:SegreCSMMultiProj}.
\begin{propn}[Theorem 12.5.3 of Cox, Little, and Schenck \cite{david2011toric}] Let $N$ be an integer lattice with dual lattice $\mathcal{M}$ and let $X_{\Sigma}$ be a complete and smooth toric variety with generating rays $\Sigma(1)=\rho_1,\dots, \rho_m $ where $\rho_j=\left\langle v_j \right\rangle$ for $v_j\in N$. Then the Chow ring of $X_{\Sigma}$ has the following presentation \begin{equation}
A^*(X_{\Sigma}) \cong \mathbb{Z}[x_1,\dots,x_r]/(\mathcal{I}+\mathcal{J}), 
\end{equation} with the isomorphism map specified by $[V({\rho_i})]\mapsto [x_i] $. Here $\mathcal{I}$ denotes the Stanley-Reisner ideal of the fan $\Sigma$, that is the ideal in $\mathbb{Z}[x_1,\dots,x_r]$ specified by \begin{equation}
\mathcal{I}=(x_{i_1}\cdots x_{i_s} \; | \; i_{i_j} \; \mathrm{distinct} \; \mathrm{and \;} \rho_{i_1} + \cdots + \rho_{i_s} \; \mathrm{is \; not \; a \; cone \; of \;} \Sigma ) 
\end{equation} and $\mathcal{J}$ denotes the ideal of $\mathbb{Z}[x_1,\dots,x_r]$ generated by linear relations of the rays, that is $\mathcal{J}$ is generated by linear forms $
\sum_{j=1}^r \left\langle \mathfrak{m},v_j \right\rangle x_j$ for $\mathfrak{m}$ ranging over some basis of $\mathcal{M}$.
\label{propn:ChowRingDef}
\end{propn}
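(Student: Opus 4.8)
The plan is to produce an explicit surjective graded ring homomorphism onto $A^*(X_{\Sigma})$ from the quotient $\mathbb{Z}[x_1,\dots,x_m]/(\mathcal{I}+\mathcal{J})$ and then to upgrade surjectivity to an isomorphism by matching ranks degree by degree. First I would define $\psi\colon \mathbb{Z}[x_1,\dots,x_m]\to A^*(X_{\Sigma})$ by $x_i\mapsto [V(\rho_i)]\in A^1(X_{\Sigma})$; since $X_{\Sigma}$ is smooth the intersection product makes $A^*(X_{\Sigma})$ a commutative graded ring, so $\psi$ is a graded ring map. Surjectivity comes from the orbit--cone correspondence: the classes $[V(\sigma)]$ of torus-invariant subvarieties generate $A_*(X_{\Sigma})$ as a group, and for a smooth toric variety the orbit closure attached to a cone $\sigma$ with rays $\rho_{i_1},\dots,\rho_{i_k}$ satisfies $[V(\sigma)]=[V(\rho_{i_1})]\cdots[V(\rho_{i_k})]$, so every group generator of $A_*(X_{\Sigma})$ already lies in the image of $\psi$.

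Next I would verify that both families of relations lie in $\ker\psi$. For the linear relations $\mathcal{J}$, each basis vector $\mathfrak{m}$ of $\mathcal{M}$ determines a character $\chi^{\mathfrak{m}}$, a rational function on $X_{\Sigma}$ whose associated principal (hence rationally trivial) divisor is $\mathrm{div}(\chi^{\mathfrak{m}})=\sum_j\langle\mathfrak{m},v_j\rangle\,V(\rho_j)$; passing to classes gives $\sum_j\langle\mathfrak{m},v_j\rangle\,[V(\rho_j)]=0$, which is exactly the image of the corresponding generator of $\mathcal{J}$. For the Stanley--Reisner ideal $\mathcal{I}$, if distinct rays $\rho_{i_1},\dots,\rho_{i_s}$ do not span a cone of $\Sigma$ then the invariant divisors $V(\rho_{i_1}),\dots,V(\rho_{i_s})$ have empty common intersection, so the product of their classes is $0$ and the monomial generator $x_{i_1}\cdots x_{i_s}$ lies in $\ker\psi$. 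Hence $\psi$ descends to a surjective graded ring homomorphism $\bar\psi\colon \mathbb{Z}[x_1,\dots,x_m]/(\mathcal{I}+\mathcal{J})\to A^*(X_{\Sigma})$.

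The hard part will be proving that $\bar\psi$ is injective, i.e.\ that these two families already account for every relation, and I expect this rank comparison to be the main obstacle. On the target side I would use the affine paving of the complete variety $X_{\Sigma}$ by torus orbits to conclude that $A_*(X_{\Sigma})$ is a free $\mathbb{Z}$-module with $\mathrm{rk}\,A^k(X_{\Sigma})$ equal to the $k$-th entry of the $h$-vector of $\Sigma$. On the source side, smoothness makes $\Sigma$ simplicial so that its Stanley--Reisner ring is Cohen--Macaulay, while completeness guarantees that the $n=\dim N$ linear forms generating $\mathcal{J}$ form a linear system of parameters; killing this regular sequence produces an Artinian ring whose graded Hilbert function is again the $h$-vector of $\Sigma$. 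The delicate point here is to establish that this Artinian quotient is torsion-free over $\mathbb{Z}$ (not merely of the right rank after tensoring with $\mathbb{Q}$). Granting freeness on both sides, a surjection of free $\mathbb{Z}$-modules of equal finite rank in each degree is forced to be an isomorphism, which yields injectivity and the claimed presentation with $[V(\rho_i)]\leftrightarrow [x_i]$.

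If the integral Cohen--Macaulay bookkeeping becomes awkward, the fallback is to pass to cohomology: for a smooth complete toric variety the cycle class map $A^*(X_{\Sigma})\to H^{2*}(X_{\Sigma};\mathbb{Z})$ is an isomorphism and the odd cohomology vanishes, so one can instead invoke the Jurkiewicz--Danilov presentation of $H^*(X_{\Sigma})$ and transport it back along this isomorphism. In either route the real content is the equal-rank comparison showing that $\mathcal{I}+\mathcal{J}$ exhausts the kernel of $\psi$.
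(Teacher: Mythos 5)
The paper itself offers no proof of this proposition: it is imported verbatim as Theorem 12.5.3 of Cox--Little--Schenck, so there is no internal argument to compare against. Your sketch is, in outline, the standard Jurkiewicz--Danilov argument that the cited reference itself uses: surjectivity from generation of $A_*(X_{\Sigma})$ by orbit closures together with $[V(\sigma)]=[V(\rho_{i_1})]\cdots[V(\rho_{i_k})]$ in the smooth case, the relations $\mathcal{J}$ from $\mathrm{div}(\chi^{\mathfrak{m}})=\sum_j \left\langle \mathfrak{m},v_j \right\rangle V(\rho_j)$ and $\mathcal{I}$ from empty intersections of invariant divisors whose rays span no cone, and then a comparison to force injectivity. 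Your fallback route --- compose the surjection $\mathbb{Z}[x_1,\dots,x_m]/(\mathcal{I}+\mathcal{J})\twoheadrightarrow A^*(X_{\Sigma})$ with the cycle class map and invoke the cohomological Jurkiewicz--Danilov presentation, so that an injective composite forces the first surjection to be injective --- is in fact exactly how CLS prove their Theorem 12.5.3, so it is not circular but rather the standard reduction (granting their Theorem 12.3.11 on $H^*(X_{\Sigma};\ZZ)$).

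Two cautions on your primary route. First, ``affine paving by torus orbits'' is wrong as stated: the orbits are algebraic tori, $O(\sigma)\cong(\CC^*)^{\,n-\dim\sigma}$, not affine spaces, so they do not give a cellular decomposition in Fulton's sense; the orbit filtration yields \emph{generation} of $A_*(X_{\Sigma})$ by invariant cycles, not freeness. To get an honest paving one needs a Bia{\l}ynicki--Birula decomposition from a generic one-parameter subgroup, which is clean for projective $X_{\Sigma}$ but requires care here, since the proposition is stated for complete smooth toric varieties and these need not be projective (the paper's own bibliography cites Fujino--Payne's smooth complete threefolds with no nontrivial nef line bundles). Second, you correctly identify integral torsion-freeness of the Artinian Stanley--Reisner quotient as the delicate point: Reisner-type Cohen--Macaulay arguments work over a field, and over $\ZZ$ the freeness in each degree is normally \emph{deduced} from the cohomological statement rather than proved directly, so your primary route has a genuine gap exactly where you flagged it. Since your fallback closes that gap and coincides with the reference's actual proof, the proposal as a whole is sound, modulo replacing the orbit-paving claim; as a minor point, the $x_1,\dots,x_r$ in the statement should read $x_1,\dots,x_m$, one variable per ray.
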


In \S\ref{section:mainresultsMultiProj} we will need an additional property for the elements of $A^1(X_{\Sigma}) \cong \mathrm{Pic}(X_{\Sigma})$. Let $X$ be a normal toric variety; a Cartier divisor $D$ on $X$ is termed \textit{numerically effective} or \textit{nef} if $D\cdot C \geq 0$ for every irreducible complete curve $C\subset X$. 

\begin{theorem}[Theorem 6.3.12 of \cite{david2011toric}]
Let $D$ be a Cartier divisor on a complete toric variety $X_{\Sigma}$. $D$ is nef, that is $ D \cdot C \geq 0$ for all torus-invariant irreducible complete curves $C \subset X_{\Sigma}$, if and only if $D$ is basepoint free, i.e.\ $\oo_{X_{\Sigma}}(D)$ is generated by global sections.
\label{theorem:nefGlobalSec}
\end{theorem}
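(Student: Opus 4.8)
The plan is to translate both conditions into the convex geometry of the support function of $D$ and to show they coincide. First I would reduce to the case of a torus-invariant Cartier divisor: since $X_{\Sigma}$ is smooth and complete, every Cartier divisor class contains a torus-invariant representative $D=\sum_{\rho}a_{\rho}D_{\rho}$, and both nefness and generation by global sections depend only on $\oo_{X_{\Sigma}}(D)$. To such a $D$ I attach its \emph{support function} $\varphi_D\colon N_{\mathbb{R}}\to\mathbb{R}$, the piecewise-linear function that is integral-linear on each cone, given on a maximal cone $\sigma$ by an element $m_{\sigma}\in M$ with $\varphi_D(u_{\rho})=\langle m_{\sigma},u_{\rho}\rangle=-a_{\rho}$ for every ray $\rho\in\sigma(1)$.

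The implication ``basepoint free $\Rightarrow$ nef'' is the formal half and is not special to the toric setting. If $\oo_{X_{\Sigma}}(D)$ is generated by global sections, its sections define a morphism $\phi\colon X_{\Sigma}\to\pp^N$ with $\oo_{X_{\Sigma}}(D)\cong\phi^{*}\oo_{\pp^N}(1)$; for any irreducible complete curve $C$ the projection formula gives $D\cdot C=\deg(\phi|_{C})\cdot\deg\bigl(\oo_{\pp^N}(1)|_{\phi(C)}\bigr)\geq 0$ (and $=0$ when $\phi$ contracts $C$). In particular $D\cdot C\geq 0$ for every torus-invariant complete curve, so $D$ is nef.

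For the converse I would first pin down the torus-invariant irreducible complete curves: by the orbit--cone correspondence these are exactly the orbit closures $V(\tau)$ attached to walls $\tau\in\Sigma(n-1)$, each shared by two maximal cones $\sigma,\sigma'$. Writing $\tau=\mathrm{Cone}(u_1,\dots,u_{n-1})$, $\sigma=\mathrm{Cone}(u_1,\dots,u_{n-1},u_n)$ and $\sigma'=\mathrm{Cone}(u_1,\dots,u_{n-1},u_n')$, smoothness yields a wall relation $u_n+u_n'=\sum_{i=1}^{n-1}b_i u_i$, and a short computation turns the intersection number into a comparison of the two linear pieces of $\varphi_D$,
$$ D\cdot V(\tau)=\langle m_{\sigma'}-m_{\sigma},\,u_n\rangle=\langle m_{\sigma},u_n'\rangle-\varphi_D(u_n'). $$
Thus the nef hypothesis $D\cdot V(\tau)\geq 0$ says precisely that across each wall the linear function $m_{\sigma}$ lies on or above $\varphi_D$ at the opposite ray $u_n'$, i.e. that $\varphi_D$ is \emph{locally convex} across every wall.

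The crux --- and the step I expect to be the main obstacle --- is promoting this local convexity across individual walls to \emph{global} convexity of $\varphi_D$ on all of $N_{\mathbb{R}}$, equivalently to $m_{\sigma}\in P_D:=\{m\in M_{\mathbb{R}} : \langle m,u_{\rho}\rangle\geq -a_{\rho}\text{ for all }\rho\}$ for every maximal cone $\sigma$. Here completeness of $\Sigma$ is essential: since the maximal cones cover $N_{\mathbb{R}}$ and any two are joined by a chain of wall-adjacent maximal cones, one propagates the per-wall inequalities along such chains to obtain $\varphi_D(u)\leq\langle m_{\sigma},u\rangle$ for all $u\in N_{\mathbb{R}}$ and all $\sigma$, which is exactly global convexity (this can fail for incomplete fans, so completeness cannot be dropped). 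Once $\varphi_D$ is known to be convex I would invoke the standard toric dictionary --- the description $H^0(X_{\Sigma},\oo_{X_{\Sigma}}(D))=\bigoplus_{m\in P_D\cap M}\CC\cdot\chi^m$ together with the criterion that $\oo_{X_{\Sigma}}(D)$ is globally generated if and only if $m_{\sigma}\in P_D$ for every maximal cone --- to conclude that convexity of $\varphi_D$ is equivalent to basepoint freeness. Chaining the equivalences, nef $\Longleftrightarrow$ $\varphi_D$ convex $\Longleftrightarrow$ basepoint free, finishes the proof.
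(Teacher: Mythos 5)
The paper offers no proof of this statement at all---it is quoted, with citation, directly from Cox, Little, and Schenck \cite{david2011toric} (their Theorem 6.3.12)---so the only meaningful comparison is with the proof in that reference, and your proposal is essentially a correct reconstruction of it: basepoint free implies nef via the morphism to $\pp^N$ and the projection formula; the torus-invariant complete curves are exactly the $V(\tau)$ for walls $\tau$; the wall computation identifies $D\cdot V(\tau)\geq 0$ with local convexity of $\varphi_D$ across $\tau$ (your sign conventions check out, e.g.\ on $\pp^1$ one gets $\langle m_{\sigma'}-m_\sigma,u_n\rangle=a_0+a_\infty=\deg D$); completeness upgrades local to global convexity; and global convexity is equivalent to $m_\sigma\in P_D$ for all maximal $\sigma$, hence to global generation. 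This is precisely the route taken in \S 6.1 and \S 6.3 of \cite{david2011toric}.

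Two refinements are needed to make the sketch fully watertight. First, the theorem as stated assumes only that $X_\Sigma$ is complete, not smooth, whereas your reduction invokes smoothness and your wall relation $u_n+u_n'=\sum_i b_iu_i$ genuinely requires it. In the general case the relation reads $bu_n+b'u_n'=\sum_i b_iu_i$ with $b,b'>0$, and the intersection number $D\cdot V(\tau)$ acquires a positive factor coming from a lattice index (\cite{david2011toric}, Proposition 6.3.8); since only the sign of $D\cdot V(\tau)$ enters the argument, everything goes through, but as written your proof covers only the smooth case. (The existence of a torus-invariant representative of a Cartier divisor class also needs no smoothness.) Second, in the step you correctly identify as the crux, propagation ``along chains of wall-adjacent maximal cones'' is too loose as stated: telescoping $\langle m_{\sigma}-m_{\sigma_k},u\rangle=\sum_i\langle m_{\sigma_i}-m_{\sigma_{i+1}},u\rangle$ produces nonnegative terms only if $u$ lies on the far side of the hyperplane of each wall crossed, and an arbitrary combinatorial chain does not guarantee this. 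The standard fix is to take the chain traced by a generic straight segment from an interior point of $\sigma$ to $u$: each difference $m_{\sigma_i}-m_{\sigma_{i+1}}$ vanishes on the wall's hyperplane and, by the per-wall inequality, is nonnegative on the entire far half-space, and a straight segment crosses each hyperplane at most once, so every telescoping term is nonnegative and global convexity follows.
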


Proposition 6.3.24 of Cox, Little, and Schenck \cite{david2011toric} states that a smooth complete toric variety is projective if and only if the cone generated by the nef divisors is full dimensional in ${\rm Pic} (X_{\Sigma})$. In particular, then, when $X_{\Sigma}$ is a smooth projective toric variety there exists a basis for $A^1(X_{\Sigma})$ consisting of nef divisors.

\subsection{Previous Algorithms}\label{subsection:MoeQviller}
In \cite{moe2013segre} Moe and Qviller give an algorithm to compute the Segre class of subschemes of smooth projective toric varieties. The algorithm of Moe and Qviller \cite{moe2013segre} is based on a result which gives an expression for the Segre class of a subscheme of a smooth projective toric variety in terms of the classes in the Chow ring of certain residual sets which are computed via saturation. This result of Moe and Qviller \cite{moe2013segre} generalizes a previous result of Eklund, Jost and Peterson \cite{Jost} which gave an expression for the Segre class of a subscheme of $\pp^n$ in terms of residual sets having a similar structure. For both cases, the residual sets are in the sense of Fulton's residual intersection theorem/formula (Theorem 9.2 and Corollary 9.2.3 of Fulton \cite{fulton}). 

The main computational step of the algorithm of Moe and Qviller \cite{moe2013segre} (and similarly for the algorithm of Eklund, Jost and Peterson \cite{Jost} for subschemes of $\pp^n$) is the computation of the saturations to find the residual sets. This can in practice be a quite computationally expensive procedure. Moe and Qviller \cite{moe2013segre} describe their algorithm which uses this result to obtain the Segre classes in Section 5 of \cite{moe2013segre}. Runtime comparisons between the algorithm constructed here and that of \cite{moe2013segre} are given in \S\ref{subsection:PerformenceMultiProj}. 

When computing $s(V,\pp^n)$ for $V$ a subscheme of $\pp^n$ algorithms of Aluffi \cite{aluffi2003computing} and Eklund, Jost, and Peterson \cite{Jost} may also be applied. The algorithm of Aluffi \cite{aluffi2003computing} works by computing the ideal of the blowup of $\pp^n$ along $V$, hence the main computational cost of this algorithm is the cost of computing the Rees algebra. This is often a very computationally expensive operation. When computing $s(V,\pp^n)$ the algorithm presented here reduces to the algorithm of the author in \cite{Helmer2015}, and does indeed offer increased performance in comparison to the algorithms of \cite{aluffi2003computing} and \cite{Jost}, see \cite{Helmer2015} for a detailed comparison in the projective setting.

A separate algorithm to compute $\chi(V)$ and $c_{SM}(V)$ using algebraic methods for $V$ a subscheme of $\pp^n$ was given by Marco-Buzun\'ariz in \cite{BuzEuler2012}. This method, in practice, computes sectional Euler characteristics and its main computational cost arises from the computation of (numerous) primary decompositions, Hilbert polynomials and elimination ideals. As noted by Marco-Buzun\'ariz in \cite[\S8.1]{BuzEuler2012} the computations required are extremely expensive and seem to quickly become impractical even in low dimension and degree.

The main performance benefit of Algorithm \ref{algorithm:SegreAlgMultiProj} seems to be that it explicitly constructs a zero dimensional system, so that we only need to compute the vector space dimension of the quotient rings specified by Theorem \ref{theorem:sat_1mTMultiProj}. While our approach can still be computationally difficult, the explicit nature of the set up allows for a variety of effective algorithms for computing the vector space dimension of a quotient by a zero dimensional ideal to be applied. 

As noted in the introduction the recent algorithm of Harris \cite{harris2015computing} will also allow for the computation of Segre classes of subschemes of a projectively embedded toric variety in the Chow ring of projective space. However in many cases the high codimension of the projective embedding leads to computations in a polynomial ring with significantly more variables (i.e.\ for $\pp^5 \times \pp^5 \times \pp^4$ using the Segre embedding gives a polynomial ring with $6 \cdot 6 \cdot 5=180$ variables, the Cox ring would have $6+6+5=17$ variables).

\subsection{$c_{SM}$ Class of a Hypersurface}\label{subsection:csmHyper}
We now give Theorem I.4 of Aluffi \cite{aluffi1999chern} which will allow us to compute $c_{SM}$ classes by computing Segre classes.  
\begin{propn}[Theorem I.4 of Aluffi \cite{aluffi1999chern}]
Let $V$ be a hypersurface in a nonsingular variety $M$ and let $Y$ be the singularity subscheme of $V$. Then we have \begin{equation}
c_{SM}(V)=c(T_M)\cap \left( s(V,M)+ \sum_{i=0}^n \sum_{j=0}^{n-i} {n-i \choose j}[V]^j \cap (-1)^{n-i}s_{i+j}(Y, M)\right) \label{eq:csm_hyper}
\end{equation} where $[V]$ is the class of $V$ in $A^*(M)$. Here $s_{i+j}(Y, M)$ denotes the dimension $i+j$ component of $s(Y, M)$ and $T_M$ denotes the tangent bundle to $M$. \label{propn:AluffiGeneralCSMSegre_Relation}
\end{propn}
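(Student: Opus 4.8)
Since this statement is Aluffi's hypersurface formula, my plan is to reconstruct the argument through the \emph{Milnor class}, namely the discrepancy between $c_{SM}(V)$ and the Chern--Fulton class. Write $V=V(f)$ as the zero scheme of a section $f$ of the line bundle $\mathcal{L}=\oo_M(V)$, so that $[V]=c_1(\mathcal{L})\in A^1(M)$, and note that the singularity subscheme $Y$ is the scheme-theoretic singular locus of $V$, cut out by $f$ and its first derivatives, i.e.\ the zero scheme of the twisted differential $df$ regarded as a section of $\Omega^1_M\otimes\mathcal{L}$; set $n=\dim M$. The first step is the elementary observation that, by \eqref{eq:CF_def}, the leading term $c(T_M)\cap s(V,M)$ of the claimed formula is exactly the Chern--Fulton class $c_F(V)$. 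Hence the entire content reduces to identifying the Milnor class $c_{SM}(V)-c_F(V)$ with $c(T_M)\cap\mathrm{CORR}$, where $\mathrm{CORR}$ is the stated double sum. Before anything else I would regroup $\mathrm{CORR}$ by dimension: collecting the dimension-$i$ part, one checks that $\sum_{j}\binom{n-i}{j}[V]^{j}\cap s_{i+j}(Y,M)=\big[(1+[V])^{n-i}\cap s(Y,M)\big]_{i}$, so that $\mathrm{CORR}=\sum_{i}(-1)^{n-i}\big[(1+[V])^{n-i}\cap s(Y,M)\big]_{i}$. This exhibits the correction as a \emph{dualize-and-twist} transform of the Segre class $s(Y,M)$ by $\mathcal{L}=\oo_M(V)$ — the $(-1)^{n-i}$ being the dualization sign on a dimension-$i$ class and the $(1+[V])^{n-i}$ the twist in codimension $n-i$ — which is precisely the shadow of Aluffi's $\mu$-class $\mu_{\mathcal{L}}(Y)=c(\Omega^1_M\otimes\mathcal{L})\cap s(Y,M)$.

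The core of the proof is then to establish this Milnor-class identity. My approach is to resolve the rational ``gradient'' (polar) map $M\dashrightarrow\pp(\Omega^1_M\otimes\mathcal{L})$ determined by $df$ by blowing up $M$ along $Y$, giving $\pi:\tilde{M}\to M$ on which the ideal of $Y$ becomes principal, with an exceptional divisor $E$ whose pushforward records $s(Y,M)$ in the sense of \eqref{eq:general_SegreDef}. On $\tilde M$ the transformed hypersurface is controlled along $E$, and I would apply the normalization and naturality axioms of the $c_{SM}$ natural transformation to $\pi$ (and to the smooth locus of $V$, where $\mathbf{1}_V$ contributes $c(T_M)\cap s(V,M)$ alone) in order to push a Chern-class computation on $\tilde M$ down to $c_{SM}(V)$. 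The upshot is that the difference $c_{SM}(V)-c_F(V)$ localizes on $Y$ and is measured by the defect between the ordinary multiplicity and the local Euler obstruction of the hypersurface singularity; the blow-up converts this local defect into the global Segre class $s(Y,M)$, twisted by the conormal data carried by $\Omega^1_M\otimes\mathcal{L}$. This produces exactly the $\mu$-class expression for the Milnor class.

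The final step is Chern-class bookkeeping, which I expect to be routine: one expands $c(\Omega^1_M\otimes\mathcal{L})$ (a rank-$n$ bundle) via $c_k(E\otimes\mathcal{L})=\sum_{l}\binom{n-l}{k-l}c_l(E)\,c_1(\mathcal{L})^{k-l}$, caps against $s(Y,M)$, applies the dual and tensor operations, and regroups by dimension; the dualization yields the signs $(-1)^{n-i}$, the twist by $\mathcal{L}$ yields the factors $(1+[V])^{n-i}$ and hence the binomials $\binom{n-i}{j}$ against the powers $[V]^{j}$, and factoring $c(T_M)$ to the front reproduces the stated formula verbatim. The main obstacle is not this bookkeeping but the computation of the previous paragraph: rigorously proving that the Milnor class is governed precisely by $s(Y,M)$ together with $\Omega^1_M\otimes\mathcal{L}$. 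Concretely, the delicate points are the resolution of the polar map, the careful pushforward of Chern classes through the exceptional divisor $E$, and above all the identification of the local Euler-obstruction discrepancy of the singularity with Segre-class data — this is where the genuine geometric input of MacPherson's transformation enters and where I would expect to spend the bulk of the effort.
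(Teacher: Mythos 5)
Note first that the paper does not prove this proposition at all: it is quoted verbatim as Theorem I.4 of Aluffi \cite{aluffi1999chern}, so the only benchmark is Aluffi's original, paper-length argument. Your outer reductions are correct and match how the formula is usually packaged: by \eqref{eq:CF_def} the leading term $c(T_M)\cap s(V,M)$ is the Chern--Fulton class $c_F(V)$, and your regrouping of the double sum as $\sum_{i}(-1)^{n-i}\bigl[(1+[V])^{n-i}\cap s(Y,M)\bigr]_{i}$ is a valid dimension count (capping $s_{i+j}(Y,M)$ with $[V]^{j}$ lands in dimension $i$), which together with the rank-$n$ twist expansion $c_k(E\otimes\mathcal{L})=\sum_{l}\binom{n-l}{k-l}c_l(E)c_1(\mathcal{L})^{k-l}$ correctly identifies the correction term with the transform of $\mu_{\mathcal{L}}(Y)=c(T^*M\otimes\mathcal{L})\cap s(Y,M)$. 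So you have faithfully reduced the statement to the Milnor-class identity, and the final bookkeeping step is indeed routine.

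The genuine gap is the middle step, which you yourself flag as where the effort lies: it is asserted, not proved, and the mechanism you sketch would not go through as written. Concretely: (a) $\tilde{M}=Bl_Y M$ is in general singular, since $Y$ is the singularity subscheme of an arbitrary hypersurface, so the normalization axiom of Definition \ref{defn:csm_natural_transform} cannot be applied on $\tilde{M}$; and invoking naturality for $\pi:\tilde{M}\to M$ requires computing $\pi_*$ of constructible functions, i.e.\ fiberwise Euler characteristics over the strata of $Y$, which your sketch never addresses --- knowing $c_{SM}$ of the relevant sets upstairs is exactly as hard as the original problem. (b) The claimed identification of ``the defect between the ordinary multiplicity and the local Euler obstruction'' with Segre-class data of $Y$ is not a citable lemma or a routine computation; it is essentially the content of the theorem restated. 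Aluffi's actual proof proceeds differently: he establishes a delicate compatibility lemma describing how both sides of the formula transform under a single blow-up along a smooth center, inducts via an embedded resolution of singularities down to the case of a divisor with normal crossings, and verifies the formula explicitly there; MacPherson's functoriality enters only at those controlled steps, never through a direct Euler-obstruction evaluation. Without carrying out that induction (or independently proving the Euler-obstruction identification, say along Parusi\'nski--Pragacz lines), your outline concentrates the theorem's entire mathematical content in a single unproven paragraph.
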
 

Consider the case where $M=X_{\Sigma}$ is a smooth complete toric variety and $V=V(f)$ for $f$ a polynomial in the coordinate ring $R=k[x_1,\dots, x_m]$ of $X_{\Sigma}$; that is restricting ourselves to the case considered in this note. By Proposition \ref{propn:SingXJacobianMinors} and Proposition \ref{propn:finPartials} the singularity subscheme $Y$ in the theorem above is the scheme defined by the partial derivatives of $f$ with respect to $x_1, \dots, x_m$. In particular Proposition \ref{propn:finPartials} tells us that we need not include $f$ among the defining equations of $Y$ since it is in the ideal generated by its partial derivatives. %Let $n=\dim(X_{\Sigma})$, let $R=k[x_1,\dots,x_m]$ be the total homogeneous coordinate ring (as in \S\ref{subsection:homhgeneCoordRing}) and let $f(x_1,\dots,x_n)$ be a polynomial in $R$. Similar to the case of $\pp^n$ and its graded coordinate ring the action of the algebraic torus on the coordinates induces a homogeneous property on the polynomials $f\in R$. Let $\lambda=(\lambda_1,\dots, \lambda_m)\in (\CC^*)^m$. By the geometric quotient construction of toric varieties (see Lemma 5.1.1 and Theorem 5.1.11 of Cox, Little and Schenck \cite{david2011toric}) we have that $f(\lambda_1x_1,\dots, \lambda_mx_m)=\lambda_1^{l_1}\cdots\lambda_m^{l_m}f(x),$ for some $l_j\in \ZZ$. From this it follows, in a manner similar to the classic proof of Euler's homogeneous function formula, that $f$ is contained in an ideal generated by its partial derivatives. Hence when $V=V(f)$ is a hypersurface in $X_{\Sigma}$ the singularity subscheme $Y$ in the theorem above is the scheme defined by the partial derivatives of $f$ with respect to $x_1, \dots, x_m$.
 
\section{Main Results}\label{section:mainresultsMultiProj}
In this section we present the main results of this note. Throughout this section and in the following sections we take $X_{\Sigma}$ to be a smooth projective toric variety, unless otherwise stated. This assumption will be somewhat relaxed to $X_{\Sigma}$ a smooth complete toric variety in Theorems \ref{theorem:SegreMultiProj}, \ref{theorem:countingPoints}, and \ref{theorem:csm_complete_int_multi_proj}. The restriction to $X_{\Sigma}$ a smooth projective toric variety is required for the construction of complementary cycles used in Theorem \ref{theorem:sat_1mTMultiProj}. 

In \S\ref{subsection:SegreSubscheme} we prove Theorem \ref{theorem:SegreMultiProj} which extends the result of Proposition 3.1 of Aluffi \cite{aluffi2003computing} to subschemes of smooth complete toric varieties $X_{\Sigma}$. For a subscheme $V$ of $X_{\Sigma}$ this result gives us an expression for the Segre class $s(V,X_{\Sigma})$ in terms of the projective degrees of a rational map $\phi: X_{\Sigma} \dashrightarrow \pp^r $. We then prove Theorem \ref{theorem:sat_1mTMultiProj} which gives an expression for the projective degrees of such a rational map in terms of the dimension of an explicit quotient ring. This theorem is the main ingredient in our algorithms to compute characteristic classes of subschemes of toric varieties. The expression in Theorem \ref{theorem:sat_1mTMultiProj} requires that we have a valid choice of a dehomogenizing ideal so we can consider the relevant intersection in affine space via the quotient construction. 

In Theorem \ref{theorem:countingPoints} we give a simple characterization of a general dehomogenizing ideal which may be used for any zero dimensional subscheme of a toric variety $X_{\Sigma}$ which satisfies what we refer to as the affine codimension condition. While Theorem \ref{theorem:countingPoints} is not needed to apply Theorem \ref{theorem:sat_1mTMultiProj} it does greatly simplify the implementation of a general purpose algorithm and our test implementation used in \S\ref{subsection:PerformenceMultiProj} is restricted to the setting of Theorem \ref{theorem:countingPoints}. 

In Theorem \ref{theorem:csm_complete_int_multi_proj} we give an expression for the $c_{SM}$ class of certain types of complete intersection subschemes of toric varieties of the form $X_{\Sigma}$; this result generalizes Theorem 3.3 of the author \cite{HelmerTCS} and leads to a more efficient algorithm that avoids performing inclusion/exclusion when computing the $c_{SM}$ class in some cases.

\subsection{Counting Points in Zero Dimensional Subschemes} \label{subsection:countPoints}
In this subsection $X_{\Sigma}$ will denote a smooth complete toric variety. The result in this subsection is essentially a consequence of the geometric quotient construction of a toric variety, see Cox, Little and Schenck \cite[\S 5.1,\S 5.2]{david2011toric}. In this subsection we again take $k=\CC$ but note that if $X_{\Sigma}=\pp^{n_1}\times \cdots \times \pp^{n_j}$ we could allow $k$ to be any algebraically closed field of characteristic zero. We also let $\Sigma(1)=\left\lbrace \rho_1, \dots, \rho_m \right\rbrace$, $R$ be the total coordinate ring of $X_{\Sigma}$ with irrelevant ideal $B$, and let $\dim(X_{\Sigma})=n$.  

We briefly review some terminology from \S\ref{section:intro}. We say that $X_{\Sigma}$ satisfies the \textit{affine codimension condition} if the number of primitive collections of the fan $\Sigma$ is equal to $ m-n$ (or equivalently if there are $m-n$ primary components in a primary decomposition of the irrelevant ideal $B$). 

Let $\tilde{R}=k[x_1,\dots,x_m]$ be a polynomial ring in the variables of $R$ but without the grading. Let $W=V(J)$ be a reduced zero dimensional subscheme of $X_{\Sigma}$ consisting of $q$ points. We refer to an ideal $L_A$ in $\tilde{R}$ as a \textit{dehomogenizing ideal} for $W$ if the intersection $V(J)\cap V(L_A) $ in $k^m=\Spec(\tilde{R})$ contains $q$ points. We refer to an ideal $L_A(\Lambda)$ as a \textit{general dehomogenizing ideal} for $X_{\Sigma}$ if for a general choice of scalars (i.e.\ scalars in some Zariski open dense set) the intersection $V(J)\cap V(L_A(\Lambda)) $ in $k^m$ contains $q$ points provided that the set $\Lambda$ is sufficiently general.  

Theorem \ref{theorem:countingPoints} below gives an explicit construction of such a general dehomogenizing ideal provided that $X_{\Sigma}$ satisfies the affine codimension condition. In practice this gives rise to a (theoretical) probability one method which can be used to count points in a zero dimensional subscheme of $X_{\Sigma}$ by instead counting points in affine space. In practical implementations general will be replaced by random, however the probability of failure may be made arbitrarily small; see \cite[Appendix A]{HelmerTCS} for a discussion of this in the projective case. 

Thinking of a projective space $\pp^n$, roughly speaking, the idea behind Theorem \ref{theorem:countingPoints} is to prevent (in general) missing counting points at infinity when dehomogenizing. Rather than thinking of dehomogenization as setting some coordinate equal to one and working in $\mathbb{A}^n$ we think of it as an intersection with $V(\ell)$ in $\mathbb{A}^{n+1}$ for an affine linear form $\ell$ (i.e.\ instead of taking $x_0=1$ we intersect with $V(x_0-1)$). To get a general point at infinity, we then take the linear form $\ell$ to be a general affine linear form. As a toric variety $\pp^n={\rm Proj}(k[x_0,\dots,x_n])$ is defined by a fan with $n+1$ rays. The total coordinate ring is $k[x_0,\dots,x_n]$ and the irrelevant ideal is $B=(x_0,\dots,x_n)$, which is a prime ideal, hence we have only one primitive collection. Since $(n+1)-\dim(\pp^n)=(n+1)-n=1$ is equal to the number of primitive collections then $\pp^n$ satisfies the affine codimension condition. The general dehomogenizing ideal for $\pp^n$ is $
L_A(\Lambda)=(\lambda_0x_0+\cdots+\lambda_nx_n-1)
$ for general $\Lambda=(\lambda_0,\dots,\lambda_n) \in (k^*)^{n+1}$. 

Write the total coordinate ring of $X_{\Sigma}$ as $R=k[x_{\rho_1},\dots,x_{\rho_m}]$ and let $\mathfrak{p}= \left\lbrace \mathbf{p_1},\dots,\mathbf{p_{\nu}} \right\rbrace$ be the set of all (unique) primitive collections of rays in $\Sigma(1)$ (note that each $\mathbf{p_{\ell}}$ is a set of rays in $\Sigma(1)$). In the theorem below we use the fact that all primary components of the irrelevant ideal $B$ of $X_{\Sigma}$ are monomial ideals, i.e.\ from \eqref{eq:irred_decomp_Vanishing_irel_ideal} (see also \cite[Definition 5.15, Proposition 5.1.6]{david2011toric}) we have $$B= \bigcap_{ \left\lbrace \rho_1, \dots ,{\rho_s} \right\rbrace \in \mathfrak{p} } (x_{\rho_1}, \dots, x_{\rho_s}).
$$

\begin{theorem}
 Let $X_{\Sigma}$ be a smooth complete toric variety which satisfies the affine codimension condition. Let $R$, $B$ and $\mathfrak{p}$ be as above and let $V=V(I)$ be any (reduced) dimension zero subscheme of $X_{\Sigma}$. The number of points in $V\subset X_{\Sigma}$ is equal to the number of points in the affine set $V(I) \cap V(L_A(\Lambda)) \subset \mathbb{C}^{m}$ where 
\begin{equation}
L_A(\Lambda)= \left( \sum_{\rho_j \in \mathbf{p_1} }\lambda_{j}^{(1)}x_{\rho_{j}} -1, \dots,  \sum_{\rho_j \in \mathbf{p_{\nu}}}\lambda_{j}^{(\nu)}x_{\rho_{j}}  -1 \right),
\end{equation}for general $\lambda_{j}^{(l)} \in \CC^*$ (where $\CC^*$ denotes the algebraic torus). $\Lambda$ denotes the collection of all $\lambda_{j}^{(l)}$. In other words $L_A(\Lambda)$ is a general dehomogenizing ideal for $X_{\Sigma}$. Note that in the summations above we associate a scalar $\lambda_{j}^{(l)}$ to each monomial in each primitive collection. \label{theorem:countingPoints}
\end{theorem}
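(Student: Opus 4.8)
The plan is to deduce everything from the geometric quotient description of Theorem \ref{theorem:GeoQuoCox}. Write $U = \CC^{\Sigma(1)} - V(B)$ and let $\pi: U \to X_{\Sigma}$ be the quotient map, so that the fibers of $\pi$ are exactly the $G$-orbits for $G=\Hom_{\ZZ}(A^1(X_{\Sigma}),\CC^*)$. Since $X_{\Sigma}$ is smooth, $A^1(X_{\Sigma})\cong \mathrm{Pic}(X_{\Sigma})\cong \ZZ^{m-n}$ is free and $G\cong(\CC^*)^{m-n}$ acts freely on $U$ with every orbit isomorphic to $G$. As $V=V(I)$ is reduced of dimension zero with $q$ points and $I$ is homogeneous, the set $V(I)\cap U$ is $G$-invariant and equals $\pi^{-1}(V)$, a disjoint union of exactly $q$ orbits. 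First I would reduce the theorem to the statement: for general $\Lambda$, each of these $q$ orbits meets $V(L_A(\Lambda))$ in exactly one point.

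The reduction rests on one clean observation: $V(L_A(\Lambda))$ is disjoint from $V(B)$. Indeed, any point of $V(B)$ has, for some primitive collection $\mathbf{p_\ell}$, all coordinates $x_{\rho_j}$ with $\rho_j\in\mathbf{p_\ell}$ equal to zero, by the primary decomposition $B=\bigcap_{\mathbf{p}\in\mathfrak{p}}(x_{\rho}\mid \rho\in\mathbf{p})$; but then the $\ell$-th generator of $L_A(\Lambda)$ evaluates to $0-1=-1\neq 0$. Hence $V(I)\cap V(L_A(\Lambda))\subset U$, and this intersection is the disjoint union, over the $q$ orbits, of the sets $(\text{orbit})\cap V(L_A(\Lambda))$. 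So the affine point count equals $q$ precisely when each orbit contributes exactly one point.

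For the per-orbit count, fix an orbit through $x\in U$ and parametrize it by $\mathbf{g}\in G$. Recalling that $x_{\rho}$ is a semi-invariant of weight $\deg(x_{\rho})=[V(x_{\rho})]\in A^1(X_{\Sigma})$, the $\ell$-th equation cutting out $V(L_A(\Lambda))$ restricts on the orbit to $\sum_{\rho_j\in\mathbf{p_\ell}}\lambda_j^{(\ell)}x_{\rho_j}\,\chi_{[V(x_{\rho_j})]}(\mathbf{g})=1$, where $\chi_b$ denotes the character of $G$ attached to $b\in\mathrm{Pic}(X_{\Sigma})$. This is a system of $\nu$ Laurent-monomial equations in $\mathbf{g}\in(\CC^*)^{m-n}$, and the affine codimension condition $\nu=m-n$ makes the number of equations equal to $\dim G$. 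I would prove that for general $\Lambda$ this system has a unique solution by choosing a basis of $\mathrm{Pic}(X_{\Sigma})$ together with an ordering of the primitive collections adapted to a flag, so that the $\ell$-th equation involves only the first $\ell$ torus coordinates and is affine-linear in the $\ell$-th. One then solves for the torus coordinates one at a time, each step being a single equation whose leading coefficient is a general nonzero linear form in the $\lambda_j^{(\ell)}$ (nonzero because $x\notin V(B)$ forces a surviving monomial). Genericity of $\Lambda$ guarantees all these leading coefficients are nonzero, yielding existence and uniqueness of $\mathbf{g}$.

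The hard part will be the combinatorial input that the affine codimension condition actually supplies such an adapted flag, uniformly over all orbits. Concretely, one must show that the divisor classes $[V(x_{\rho})]$, grouped by primitive collection, can be arranged unitriangularly with respect to some basis of $\mathrm{Pic}(X_{\Sigma})$ --- equivalently that the toric mixed volume $MV(P_1,\dots,P_\nu)$ of the polytopes $P_\ell=\mathrm{conv}(\{0\}\cup\{[V(x_{\rho})]\mid \rho\in\mathbf{p_\ell}\})$ equals $1$ --- and moreover that this persists after discarding the monomials killed by the vanishing pattern of any given orbit (that is, for points lying in any torus orbit of $X_{\Sigma}$, where some $x_{\rho}=0$). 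This uniformity is exactly where the equality $\nu=m-n$, rather than the generic inequality (the number of primitive collections is always at least $m-n$), is essential, and verifying it from the structure of primitive collections is the main obstacle. The remaining genericity bookkeeping, ensuring that the fixed coefficients coming from the actual points of $V$ together with the scalars $\Lambda$ avoid the finitely many bad loci, is routine and parallels the projective case treated in \cite[Appendix A]{HelmerTCS}.
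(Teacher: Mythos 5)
Your reduction is sound, and in one respect cleaner than the paper's own argument: the disjointness $V(L_A(\Lambda))\cap V(B)=\emptyset$ holds for \emph{every} $\Lambda$, not just general ones, exactly as you argue from the primary decomposition of $B$, and combined with the freeness of the $G$-action on $\CC^{\Sigma(1)}-V(B)$ (valid since $X_{\Sigma}$ is smooth) it correctly reduces the theorem to the statement that each of the $q$ orbits over $V$ meets $V(L_A(\Lambda))$ in exactly one point. The genuine gap is in the per-orbit count, and you name it yourself: the existence of an ordering of the primitive collections and a basis of ${\rm Pic}(X_{\Sigma})$ making the restricted character system unitriangular, uniformly over all vanishing patterns of coordinates at points of $V$, is asserted but never established, and this is not ``routine bookkeeping'' --- it is the entire content of the theorem, i.e.\ precisely where the hypothesis $\nu=m-n$ must be spent. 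Moreover the intermediate claim that each equation becomes affine-linear in the $\ell$-th torus coordinate is false as stated in general: variables within a single primitive collection can carry different degrees (in Example \ref{example:smoothFanoToric} the collection $\{x_0,x_1,x_2\}$ has $\deg(x_0)=(1,-1)$ but $\deg(x_1)=(1,0)$), so after substituting the previously solved coordinates the $\ell$-th equation generically contains several distinct powers of the new coordinate, and uniqueness of the root is exactly the mixed-volume-one assertion you left open. As written, the proposal proves the easy half (the affine count is supported on the right orbits) and postpones the hard half.

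It is worth contrasting this with how the paper avoids the character combinatorics altogether. The paper's proof never parametrizes orbits: using Proposition 5.2.10 of Cox, Little, and Schenck \cite{david2011toric}, local coordinates on each chart $U_{\sigma}$ ($\sigma$ maximal) are obtained by setting $x_{\rho}=1$ for suitable $\rho\notin\sigma(1)$, and the consequence of the affine codimension condition it extracts is combinatorial: every ray outside a maximal cone lies in some primitive collection, and each primitive collection contains exactly one ray outside each maximal cone, so one may choose one ray per primitive collection and set those $m-n$ coordinates to $1$ compatibly across all charts, fixing a representative of each orbit via Theorem \ref{theorem:GeoQuoCox}. It then deforms the equations $x_{\rho}=1$ to the general affine-linear generators of $L_A(\Lambda)$, checks the resulting slices miss $V(B)$ and still cover $V$, and concludes the count is preserved. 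If you want to salvage your route rather than adopt the paper's, the same fact is the needed input: for a point lying over the chart $U_{\sigma}$ one has $x_{\rho}\neq 0$ for all $\rho\notin\sigma(1)$, the classes $[V(x_{\rho})]$, $\rho\notin\sigma(1)$, form a basis of ${\rm Pic}(X_{\Sigma})$, and each primitive collection contributes exactly one such basis class with a guaranteed nonvanishing coefficient --- this is the ``surviving monomial with the right exponent structure'' your argument requires, but turning it into the claimed triangular solvability still demands an argument you have not supplied.
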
 

\begin{proof}
Because the $\lambda_{j}^{(l)}$ are general and by our assumption on the number of primitive collections, we have that $V(L_A(\Lambda))$ will have codimension $m-n$ in $\CC^m$.  

From Theorem \ref{theorem:GeoQuoCox} we have a geometric quotient $\pi:\CC^m - Z(B) \to X_{\Sigma} $. Following the terminology of Cox, Little, and Schenck \cite{david2011toric}, given a point $p \in X_{\Sigma}$ we say a point $x \in \pi^{-1}(p)$ gives homogeneous coordinates for $p$. Since $\pi$ is a geometric quotient we have $\pi^{-1}(p)=G \cdot x$. 

Given a homogeneous polynomial $f\in R$ we have that if $f(x)=0$ for one choice of homogeneous coordinates of $p \in X_{\Sigma}$ then $f(x)=0$ for all choices of homogeneous coordinates. Hence to count points in $V$ we may fix a choice of homogeneous coordinates for our points $p_1,\dots, p_l \in X_{\Sigma}$. We may do this as follows. 

For a cone $\sigma\in \Sigma$ let $U_{\sigma}$ denote the affine toric variety of $\sigma$ (see Theorem 1.2.18 of Cox, Little, and Schenck \cite{david2011toric}). Since $X_{\Sigma}$ is smooth and complete, the affine open sets $U_{\sigma}$ for $\sigma$ a maximal cone are a torus invariant affine covering of $X_{\Sigma}$. By Proposition 5.2.10 of Cox, Little, and Schenck \cite{david2011toric} (also see the remark following Proposition 5.2.10 of \cite{david2011toric}) $V \cap U_{\sigma}$, the affine piece of $V$ for each maximal cone $\sigma$, may be obtained by setting $x_{\rho}=1$ for some $\rho\notin \sigma(1)$ in each of the polynomials defining $I$; this gives local coordinates on $X_{\Sigma}$. In our case we may patch this together to give global coordinates by choosing a unique ray $\rho$ from each primitive collection and setting $x_{\rho}=1$ for each of these $\rho$. 

More specifically consider a primitive collection $C$, we know that $C$ is not contained in $\sigma(1)$ for all $\sigma \in \Sigma$ and specifically if we are considering some maximal cone $\sigma$ then $C$ is not in this $\sigma(1)$. Hence there is some ray in $C$ which is not in $\sigma(1)$. Now suppose that we have maximal cones $\sigma_1,\dots, \sigma_j$ and primitive collections $C_1, \dots , C_{m-n}$. We may choose one ray $\rho_i$ from each primitive collection $C_i$ such that at least one of the maximal cones does not contain $\rho_i$; further we know from the structure of the irrelevant ideal that all rays not in a maximal cone will appear in some primitive collection meaning that we may choose appropriate rays from each primitive collection so that we can give compatible local coordinates on each maximal cone. Further, again from the structure of the irrelevant ideal, we see that for each maximal cone $\sigma$ there exists exactly one $\rho$ in each primitive collection which is not in $\sigma$.

Hence by setting $x_{\rho}=1$ for some such $\rho$ in each primitive collection we obtain affine sets $V\cap U_{\sigma}$ for each maximal cone $\sigma$ which cover $V$. Taking the intersection of these sets we must obtain all points in $V$ and we may not obtain points which are not in $V$, hence the intersection of these affine sets must have the same number of points as $V$. 

If we instead take a general linear combination of the $x_{\rho}$ for $\rho \in C_i$ and set this linear combination equal to $1$ for each $C_i$ and work in the larger ambient space $\CC^m$, since the linear combination is general, then the vanishing set of this equation will not contain points which lay in $V(B)$ (since by the construction of $B$, given a point in our homogeneous coordinates for $X_{\Sigma}$, if there is at least one coordinate $x_{\rho}\neq 0$ for some $\rho$ in each primitive collection then this point is not in $V(B)$). Hence by taking such linear combinations we would expect to obtain a new set of affine spaces covering $V$ as a subset of $\CC^m$, provided the linear combination is sufficiently general. Taking the intersection of the new affine covering spaces gives the set $V(I) \cap V(L_A(\Lambda)) \subset \CC^m$, and by the arguments above this space will have dimension zero and hence will consist of points in $\CC^m$. Further, since it is obtained from affine pieces which cover $V$ the number of points in $V(I) \cap V(L_A(\Lambda)) \subset \CC^m$ will be the same as the number of points in $V$. \qedhere
\end{proof}

As a toric variety $\pp^n\times \pp^m={\rm Proj}(k[x_0,\dots,x_n]) \times {\rm Proj}(k[y_0,\dots,y_m])$ is defined by a fan with $(n+1)+(m+1)$ rays. In this case the irrelevant ideal is $B=(x_0,\dots,x_n)\cap (y_0,\dots,y_m),$ so there are two primitive collections. Since $(n+1)+(m+1)-\dim(\pp^n\times\pp^m)=2$ is equal to the number of primitive collections the affine codimension condition is satisfied. The general dehomogenizing ideal for $\pp^n \times \pp^m$ is $
L_A(\Lambda)=(\lambda^{(1)}_0x_0+\cdots+\lambda^{(1)}_nx_n-1, \lambda^{(2)}_0y_0+\cdots+\lambda^{(2)}_my_m-1)
$ for general $\Lambda=(\lambda^{(1)}_0,\dots,\lambda^{(1)}_n, \lambda^{(2)}_0, \dots,\lambda^{(2)}_m ) \in (k^*)^{n+1} \times(k^*)^{m+1} .$

As mentioned in the \S\ref{section:intro}, 42 of the 124 unique smooth Fano fourfolds satisfy the affine codimension condition. We consider one such example below. 
\begin{example}
Let $X_{\Sigma}$ be the toric variety defined by the fan $\Sigma$ with rays $\rho_0=(1, 0, 0, 0)$, $\rho_1=(0, 1, 0, 0)$, $\rho_2=(-1, -1, 0, 0)$, $\rho_3=(1, 0, 1, 0)$, $\rho_4=(0, 0, 0, 1)$, $\rho_5=(0, 0, -1, -1)$ and maximal cones $\left\langle \rho_0,\rho_1,\rho_3,\rho_4 \right\rangle$, $\left\langle \rho_0,\rho_1,\rho_3,\rho_5 \right\rangle$, $\left\langle \rho_0,\rho_1,\rho_4,\rho_5 \right\rangle$, $\left\langle \rho_0,\rho_2,\rho_3,\rho_4 \right\rangle$, $\left\langle \rho_0,\rho_2,\rho_3,\rho_5 \right\rangle$, $\left\langle \rho_0,\rho_2,\rho_4,\rho_5 \right\rangle$, $\left\langle \rho_1,\rho_2,\rho_3,\rho_4 \right\rangle$, $\left\langle \rho_1,\rho_2,\rho_3,\rho_5 \right\rangle$, $\left\langle \rho_1,\rho_2,\rho_4,\rho_5 \right\rangle$.
$X_{\Sigma}$ is a smooth Fano projective toric variety (this is \textsf{smoothFanoToricVariety(4,7)} in the ``NormalToricVarieties" Macaulay2 package \cite{M2}). The total coordinate ring is $R=k[x_0,\dots, x_5]$, ${\rm Pic}(X_{\Sigma}) \cong \ZZ^2$, and the grading on $R$ is given by $$\deg(x_0)=(1,-1),\; \deg(x_1)=\deg(x_2)=(1,0), \; \deg(x_3)=\deg(x_4)=\deg(x_5)=(0,1).$$  The divisors $[V(\rho_1)] =[V(\rho_2)]$ and $[V(\rho_3)]=[V(\rho_4)]=[V(\rho_5)]$ are nef. The fan $\Sigma$ has two primitive collections, $R$ has six variables, and $\dim(X_{\Sigma})=4$. Thus $X_{\Sigma}$ satisfies the affine codimension condition (since $6-4=2$ and there are two primitive collections). The irrelevant ideal of $X_{\Sigma}$ is $B=(x_0,x_1,x_2)\cap (x_3,x_4,x_5),$ so by Theorem \ref{theorem:countingPoints} we have that the general dehomogenizing ideal is $
L_A({\Lambda})=\left( \lambda_0 x_0+\lambda_1 x_1+\lambda_2 x_2-1,   \lambda_3 x_3+\lambda_4 x_4+\lambda_5 x_5-1\right)
$ for general $\Lambda=(\lambda_0,\lambda_1,\lambda_2,\lambda_3,\lambda_4,\lambda_5)\in (\CC^*)^3\times(\CC^*)^3$.
\label{example:smoothFanoToric}
\end{example}

It is worth noting that while the affine codimension condition allows us to employ Theorem \ref{theorem:countingPoints}, which is computationally convenient since it builds a general dehomogenizing ideal, it is not strictly necessary if one simply wishes to dehomogenize the homogeneous coordinates. In particular, for a given subscheme $V$ of some smooth complete toric variety $X_{\Sigma}$, we always have an affine covering of $V$ given by $V\cap U_{\sigma}$ for all maximal cones $\sigma \in \Sigma$ where $U_{\sigma}$ is obtained by setting the coordinate $x_{\rho}=1$ for some $\rho \notin \sigma$, see Proposition 5.2.10 of Cox, Little, and Schenck \cite{david2011toric}. Hence when $V$ is a reduced zero dimensional subscheme we could count the number of points in $V$ by computing in these affine charts and patching together the results. The implementation of any algorithm for computing characteristics classes using the procedure of dehomogenization by affine charts and patching to count points would likely be much more complicated than the algorithm outlines presented in \S\ref{section:SegreCSMMultiProj}. However, given an effective computational procedure for working with the affine charts, the result of Theorem \ref{theorem:sat_1mTMultiProj} could be reformulated and the proof would proceed in a nearly identical manner. Since our method using dehomogenizing ideals allows for a simple and effective implementation we will not explore this alternate approach here, however it would be an interesting topic for further study.

\subsection{The Segre Class of Subschemes}\label{subsection:SegreSubscheme}
In this subsection $X_{\Sigma}$ will denote a smooth complete toric variety. Let $R$ be the graded homogeneous coordinate ring of $X_{\Sigma}$. Let $I$ be an ideal in $R$ which is homogeneous with respect to the grading. Then, since $I$ is homogeneous with respect to the grading, (by Cox \cite[\S3]{cox1995homogeneous}) we may choose generators $I=(f_0,\dots,f_r)$ so that $[V(f_i)]=\alpha$ in $A^1(X_{\Sigma})$ for all $i$. Throughout this subsection we suppose that $\alpha$ is nef. Also let $V=V(I)$ be the closed subscheme of $X_{\Sigma}$ defined by $I$. Observe that $V$ is the base of the linear system defined by the sections $f_0,\dots, f_r$, see Fulton \cite[\S 4.4]{fulton}.

Define a rational map $\phi:X_{\Sigma} \dashrightarrow \pp^r$ given by \begin{equation}
\phi: p \mapsto (f_0(p):\cdots: f_r(p)).\label{eq:rational_map_of_ideal}
\end{equation} Let $
\Gamma_I \subset X_{\Sigma} \times \pp^r$ denote the closure of the graph of $\phi$. Let $\mathfrak{h}$ denote the hyperplane class in $\pp^r$ and let $\pi: \Gamma_I \to X_{\Sigma}$ be the projection. Working from the graph $\Gamma_I$ we define a class 
\begin{equation}
G= \sum_{{\iota}=0}^n [Y_{\iota}]   \in A^*(X_{\Sigma}),  \label{eq:G_biproj}
\end{equation} where $[Y_{\iota}]=\pi_*(\mathfrak{h}^{\iota} \cdot [\Gamma_I])$. Note that by definition $[Y_{\iota}]=\left[ \overline{\phi^{-1}(\pp^{r-{\iota}})} \right]$ where $\pp^{r-{\iota}}$ denotes a general linear subspace of dimension $r-{\iota}$ in $\pp^r$. Put another way $[Y_{\iota}]$ is the class of the closure of the inverse image under $\phi$ of a general linear subspace of codimension ${\iota}$ in $\pp^r$.

\begin{lemma}
With the notations and assumptions above we have that $Y_{\iota}$ is generically reduced, has pure codimension $\iota$, and we have that\begin{equation}
Y_{\iota}= \overline{V(P_1, \dots, P_{\iota})-V(I)}\label{eq:Yiota_class_1}
\end{equation} with the $P_i$ being a general linear combination of $f_0,\dots,f_r$. That is $ P_i=\sum_j \lambda^{(i)}_j f_j$ for general $\lambda^{(i)}_j \in k $ prescribed by the general linear subspace $\pp^{r-{\iota}}$. Further, the class $[Y_{\iota}]$ will be the same for any general choice of $\lambda^{(i)}_j \in k $. \label{lemma:Yiota}
\end{lemma}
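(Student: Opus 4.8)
The plan is to analyze the map $\phi$ and its graph $\Gamma_I$ geometrically, and to identify the class $[Y_{\iota}] = \pi_*(\mathfrak{h}^{\iota}\cdot[\Gamma_I])$ with the class of the closure of $V(P_1,\dots,P_\iota)$ away from the base locus $V(I)$. First I would recall the meaning of the hyperplane intersections. A general hyperplane $H_i = V(\sum_j \lambda^{(i)}_j z_j) \subset \pp^r$ (in homogeneous coordinates $z_0,\dots,z_r$ on $\pp^r$) pulls back under $\phi$ to the divisor $V(P_i)$ where $P_i = \sum_j \lambda^{(i)}_j f_j$, \emph{away from} the base locus $V(I)$ where $\phi$ is undefined. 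Intersecting $\iota$ such general hyperplanes gives a general linear subspace $\pp^{r-\iota}$ of codimension $\iota$, and taking the closure of the preimage gives $Y_\iota$. This is precisely the geometric content of $[Y_{\iota}] = [\,\overline{\phi^{-1}(\pp^{r-\iota})}\,]$ stated just before the lemma, so the identity \eqref{eq:Yiota_class_1} should follow by carefully tracking the relationship between $\Gamma_I$, the projection $\pi$, and the hyperplane class $\mathfrak{h}$.

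The key steps, in order, are as follows. I would work on the graph $\Gamma_I \subset X_\Sigma \times \pp^r$, where $\phi$ becomes a genuine morphism $\mathrm{pr}_2 \colon \Gamma_I \to \pp^r$ (projection to the second factor). The class $\mathfrak{h}^{\iota}\cdot[\Gamma_I]$ is represented by the intersection of $\Gamma_I$ with the preimages of $\iota$ general hyperplanes in $\pp^r$, i.e.\ with $\mathrm{pr}_2^{-1}(\pp^{r-\iota})$ for a general $\pp^{r-\iota}$. By generality (a Bertini-type argument, using that the $\lambda^{(i)}_j$ are general and that a general linear subspace meets things transversally), this intersection is generically reduced and of pure codimension $\iota$ in $\Gamma_I$. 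Pushing forward by $\pi = \mathrm{pr}_1$ and using that $\pi \colon \Gamma_I \to X_\Sigma$ is birational (it is an isomorphism over the open set $X_\Sigma \setminus V(I)$ where $\phi$ is defined), the image is $\overline{\phi^{-1}(\pp^{r-\iota})}$, which by the hyperplane-pullback description equals $\overline{V(P_1,\dots,P_\iota) - V(I)}$. The three assertions---generic reducedness, pure codimension $\iota$, and the explicit equation \eqref{eq:Yiota_class_1}---then all come out of this transversality/generality analysis, and the independence of $[Y_\iota]$ on the general choice follows because all general choices give rationally (indeed algebraically) equivalent cycles, as they form a family parametrized by an open subset of a Grassmannian or affine space of coefficient choices.

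The main obstacle I expect is the generic reducedness and pure-codimensionality claim, which requires a genuine Bertini-type argument adapted to this toric, possibly-singular-base-locus setting. The subtlety is that $\phi$ is only a rational map and $V(I)$ may be a complicated (non-reduced, high-codimension) scheme, so one must be careful that taking general hyperplane sections on $\pp^r$ pulls back to a generically transversal intersection on $X_\Sigma \setminus V(I)$ and that the closure operation does not introduce embedded or lower-dimensional components changing the cycle class. I would handle this by invoking generic smoothness/Bertini on the smooth open locus where $\phi$ is a morphism (the complement of $V(I)$ in $X_\Sigma$, identified with an open subset of $\Gamma_I$), noting that $X_\Sigma$ is smooth so Bertini applies, and then arguing that the closure adds only components supported on $V(I)$, which are removed by the residual construction $\overline{V(P_1,\dots,P_\iota) - V(I)}$. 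The independence of the class on the general choice is comparatively routine: it follows from the fact that $\mathfrak{h}^\iota \cdot [\Gamma_I]$ is a fixed class in $A^*(\Gamma_I)$ independent of which general representative $\pp^{r-\iota}$ we pick, together with the well-definedness of the pushforward $\pi_*$.
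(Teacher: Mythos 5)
Your proposal is correct and follows essentially the same route as the paper: both restrict to the open locus $X_{\Sigma} - V(I)$ where $\phi$ is an honest morphism (your graph $\Gamma_I$ discussion just unwinds the paper's own definition $[Y_{\iota}]=\pi_*(\mathfrak{h}^{\iota}\cdot[\Gamma_I])$, since $\pi$ is an isomorphism over that locus), identify the pullbacks of the $\iota$ general hyperplanes with the $V(P_i)$, and obtain generic reducedness, pure codimension $\iota$, and the independence of the class from a transversality statement for the general linear subspace $\pp^{r-\iota}$. The only cosmetic difference is that the paper cites Kleiman's transversality theorem where you invoke a Bertini/generic-smoothness argument; over an algebraically closed field of characteristic zero these are interchangeable here, since the relevant linear system is base-point-free on the smooth open locus.
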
\begin{proof}By construction $V$ forms the base of a linear system (see for example Fulton \cite[\S 4.4]{fulton}), hence $\phi$ is a morphism away from $V$. Let $\tilde{\phi}:X_{\Sigma}-V\to \pp^r$ be this morphism. To compute $[Y_{\iota}]$ we must compute the class of the inverse image of a general linear subspace $\pp^{r-{\iota}}$ under the morphism $\tilde{\phi}$. By Kleiman's transversality theorem \cite{kleiman1974transversality} (see also Eisenbud and Harris \cite[Theorem 1.7 (b)]{eisenbud20163264}) we have that $Y_{\iota}=\tilde{\phi}^{-1}(\pp^{r-{\iota}})$ is generically reduced and has pure codimension $\iota$. Rather than working with the map $\tilde{\phi}$ we may instead work directly with $\phi$ provided we avoid $V$, since these maps agree outside of $V$. The coordinates of the image of the rational map $\phi$ are given by $f_0,\dots, f_r$ thus, away from $V$, $\tilde{\phi}^{-1}(\pp^{r-{\iota}})$ is given by the intersection of $\iota$ hypersurfaces defined by general linear combinations of $f_0,\dots, f_r$; this gives \eqref{eq:Yiota_class_1}. Note that while $Y_{\iota}$ will depend on the choice of the linear subspace $\pp^{r-{\iota}}$ (which corresponds to a choice of $\lambda^{(i)}_j \in k $) the class $[Y_{\iota}]$ will not as long as the choice is sufficiently general, i.e.\ provided the scalars lie in the Zariski dense set prescribed by Kleiman's transversality theorem. \qedhere
\end{proof}

 Also note that $[Y_{\iota}]=\alpha^{\iota}$ for ${\iota}<\codim(V)$ since $V$ has no components of codimension less than $\codim(V)$, i.e.\ for  ${\iota}<\codim(V) $ \begin{equation}
[Y_{\iota}]= \left[ {V(P_1+\cdots+P_{\iota})}\right]. \label{eq:Yiota_iotaLessThanCodimV}
\end{equation}Further note that \begin{equation}
[Y_{\iota}]=0 \;\;\; \mathrm{for} \;\iota>r. \label{eq:Yiota_gre_min_n_r}
\end{equation}

Take $\omega_1^{(\iota)},\dots \omega_m^{(\iota)}$ to be a basis for $A^{\iota}(X_{\Sigma})$, then the class $[Y_{\iota}] \in A^*(X_{\Sigma})$ will have the form \begin{equation}
[Y_{\iota}]=\sum_{i=1}^m \gamma_{i}^{(\iota)}\omega_i^{(\iota)} ,\label{eq:projectiveMultiDegrees}
\end{equation}we refer to the $\gamma_{i}^{(\iota)}$ as the \textit{projective degrees} of the rational map $\phi$. Note that these projective degrees reduce to the usual projective degrees when $X_{\Sigma}=\pp^n$ is a single projective space (see \cite[Example 19.4]{harris1992algebraic}). We will, however, often find it notationally simpler to work with the classes $[Y_{\iota}]$ and the class $G$ of \eqref{eq:G_biproj}.
 
We now state a notation of Aluffi \cite[\S 1.4]{aluffi1995singular} for operations in the Chow ring of a smooth algebraic variety $M$. Let $\alpha = \sum_{i\geq 0}\alpha^{(i)}$ be a cycle class in $A^*(M)$ with $\alpha^{(i)}$ denoting the piece of $\alpha$ of codimension $i$ in $A^*(M)$, that is $\alpha^{(i)} \in A^i(M)$. Also take $\mathcal{L}$ to be some line bundle on $M$. In this setting define the following notation (which will be used in the proof of the theorem below):
 \begin{equation}
  \alpha \otimes \mathcal{L} = \sum_{i \geq 0} \frac{\alpha^{(i)}}{c(\mathcal{L})^i}. \label{eq:ALuffi_chow_ring_tensor_notations}
\end{equation}

\begin{theorem} 
Let $X_{\Sigma}$ be a smooth complete toric variety with total coordinate ring $R$. Let $V=V(I)$ be a subscheme of $X_{\Sigma}$ defined by an ideal $I=(f_0,\dots, f_r)$ homogeneous with respect to the grading on $R$ and assume, without loss of generality, that $[V(f_i)]=\alpha$ in $ A^1(X_{\Sigma})$ for all $i$. Further suppose that $\alpha$ is nef. Then we have that
$$\vspace{-2mm}
 s(V,X_{\Sigma}) =1- \frac{1}{c(\oo(\alpha))}\cdot \sum_{\iota \geq 0} \frac{[Y_{\iota}]}{(1+\alpha)^{\iota} }.
$$ 

\label{theorem:SegreMultiProj}
\end{theorem}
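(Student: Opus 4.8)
The plan is to realize the Segre class $s(V,X_\Sigma)$ via the blow-up description in \eqref{eq:general_SegreDef} and then identify the exceptional divisor with data from the graph $\Gamma_I$ of the rational map $\phi$. The key geometric fact is that the closure of the graph $\Gamma_I \subset X_\Sigma \times \pp^r$ is isomorphic to the blow-up $Bl_V X_\Sigma$ of $X_\Sigma$ along $V$, because $V$ is the base scheme of the linear system spanned by $f_0,\dots,f_r$ (all of the same nef degree $\alpha$). Under this identification the exceptional divisor corresponds to a divisor whose class is expressible in terms of the hyperplane class $\mathfrak{h}$ pulled back from $\pp^r$ and the class $\alpha = [V(f_i)]$ pulled back from $X_\Sigma$. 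Concretely, since each $f_i$ is a section of $\oo(\alpha)$, the tautological relation on the graph gives that the exceptional divisor $E$ satisfies $[E] = \pi^*\alpha - \mathfrak{h}$ (or equivalently $\mathfrak{h} = \pi^*\alpha - [E]$) in $A^*(\Gamma_I)$; this is the standard relation for a blow-up along a base locus of a linear system.

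First I would set up this identification carefully and record that $\eta_* $ in \eqref{eq:general_SegreDef} factors through $\pi_*$, so that computing $s(V,X_\Sigma)$ reduces to pushing forward a rational expression in $[E]$ and $\mathfrak{h}$ from $\Gamma_I$ to $X_\Sigma$. Using $s(V,X_\Sigma)=\eta_*\bigl([\tilde V]/(1+[\tilde V])\bigr)$ with $[\tilde V]=[E]$, I would substitute $[E]=\pi^*\alpha - \mathfrak{h}$ and expand $[E]/(1+[E])$ as a power series in $\mathfrak{h}$ with coefficients that are polynomials in $\alpha$. Then I apply the projection formula: since $\pi_*(\mathfrak{h}^\iota)=[Y_\iota]$ by the very definition in \eqref{eq:G_biproj}, each monomial $\mathfrak{h}^\iota$ pushes to the class $[Y_\iota]$, while powers of $\pi^*\alpha$ pass through $\pi_*$ unchanged by the projection formula. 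The Aluffi tensor notation \eqref{eq:ALuffi_chow_ring_tensor_notations} is exactly the bookkeeping device that repackages the resulting double sum; the factor $1/c(\oo(\alpha))=1/(1+\alpha)$ and the denominators $(1+\alpha)^\iota$ emerge from collecting the $\alpha$-contributions associated to tensoring by $\oo(\alpha)$.

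The hard part will be justifying the relation $[E]=\pi^*\alpha - \mathfrak{h}$ and the $\cong Bl_V X_\Sigma$ identification rigorously in the toric setting, and then controlling the formal manipulation so that the series $[E]/(1+[E])$ reorganizes precisely into the tensor-notation form. In particular I would need to verify that the expansion
\begin{equation}
\eta_*\!\left(\frac{[E]}{1+[E]}\right) = \eta_*\!\left(\frac{\pi^*\alpha - \mathfrak{h}}{1+\pi^*\alpha-\mathfrak{h}}\right)
\end{equation}
can be rewritten, after applying $\pi_*$ termwise and using $\pi_*(\mathfrak{h}^\iota)=[Y_\iota]$, as a clean expression involving $\sum_{\iota}[Y_\iota]/(1+\alpha)^\iota$ divided by $(1+\alpha)$. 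This is essentially the computation carried out by Aluffi in the $\pp^n$ case in \cite[Proposition 3.1]{aluffi2003computing}, so I expect the structure of the argument to mirror that proof closely; the genuinely new content is checking that the nef hypothesis on $\alpha$ guarantees $\phi$ is well-defined as a morphism away from $V$ and that Kleiman transversality (already invoked in Lemma \ref{lemma:Yiota}) legitimizes the intersection-theoretic interpretation of the $[Y_\iota]$ in the toric Chow ring $A^*(X_\Sigma)$.

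Finally I would assemble the pieces: substitute the pushed-forward series into \eqref{eq:general_SegreDef}, recognize the tensor-by-$\oo(\alpha)$ pattern, and simplify to obtain
\begin{equation}
s(V,X_\Sigma) = 1 - \frac{1}{c(\oo(\alpha))}\cdot\sum_{\iota\geq 0}\frac{[Y_\iota]}{(1+\alpha)^\iota},
\end{equation}
noting that the constant term $1$ arises from the codimension-zero contribution (equivalently from rewriting $[E]/(1+[E]) = 1 - 1/(1+[E])$). The nef assumption enters to ensure $\alpha$ behaves like an effective, globally generated class so that $\oo(\alpha)$ has the sections needed to define $\phi$ and so that the intersection-theoretic identities hold in $A^*(X_\Sigma)$.
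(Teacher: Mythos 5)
Your proposal is correct and follows essentially the same route as the paper's proof: identifying $\Gamma_I$ with $Bl_V(X_\Sigma)$, deriving $[E]=\alpha-\mathfrak{h}$ from the relation $\sigma^*(\oo_{\pp^r}(1))=\pi^*(\oo_{X_\Sigma}(\alpha))\otimes \oo(-E)$ of Fulton \cite[\S 4.4]{fulton} (which is exactly where the nef hypothesis enters, since it guarantees $\oo(\alpha)$ is generated by global sections), applying \eqref{eq:general_SegreDef}, and pushing forward with $\pi_*(\mathfrak{h}^\iota\cdot[\Gamma_I])=[Y_\iota]$ while using Aluffi's tensor notation \eqref{eq:ALuffi_chow_ring_tensor_notations} to collect the $(1+\alpha)$-denominators. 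The only cosmetic difference is that the paper carries out the series reorganization as a short chain of identities ending in $1-\bigl(G\otimes\oo(\alpha)\bigr)/c(\oo(\alpha))$, which is precisely the ``tensor-by-$\oo(\alpha)$ pattern'' you describe.
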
 \begin{proof}
By construction, the graph $\Gamma_I$ is isomorphic to the blow-up of $X_{\Sigma}$ along $V$, $Bl_V(X_{\Sigma})$. Note that $V$ is the zero scheme of a section of $\oo(\alpha)^{r+1}$. Let $E=\pi^{-1}(V)$ be the exceptional divisor of the blow-up $Bl_V(X_{\Sigma})$. From Fulton \cite[\S4.4]{fulton} (which we may apply since $\alpha$ is nef and hence generated by global sections) we have that $ \sigma^*(\oo_{\pp^r}(1))=\pi^*(\oo_{X_{\Sigma}}(\alpha)) \otimes \oo(-E)$ where $\sigma:Bl_V(X_{\Sigma}) \to \pp^r$ is the projection; let $[E]$ be the class of the exceptional divisor in the Chow ring of $X_{\Sigma}\times \pp^r$. From this we have $\mathfrak{h}=\alpha-[E]$ and hence $[E]=\alpha-\mathfrak{h}$. Applying Fulton \cite[Corollary 4.2.2]{fulton} (given in \eqref{eq:general_SegreDef} above) we have $$
s(V,X_{\Sigma})= \pi_* \left( \frac{[E]}{1+[E]}\right)=\pi_* \left( \frac{\alpha- \mathfrak{h}}{1+\alpha-\mathfrak{h}}\right).
$$
We may simplify this expression as follows:\footnotesize \begin{align*}
\pi_* \left( \frac{\alpha-\mathfrak{h}}{1+\alpha-\mathfrak{h}}\right) =&\; \pi_* \left( \frac{[\Gamma_I](1+\alpha-\mathfrak{h}) -[\Gamma_I]}{1+\alpha-\mathfrak{h}}\right) \\
=&\;\pi_* \left([\Gamma_I]- \frac{1}{1+\alpha-\mathfrak{h}}\cdot [\Gamma_I]\right) \\
=&\;1-\pi_* \left(\frac{1}{1+\alpha} \cdot \frac{1+\alpha}{1+\alpha-\mathfrak{h}}\cdot [\Gamma_I]\right) \\
=&\;1-\frac{1}{c(\oo(\alpha))} \cdot\pi_* \left(\left( \frac{1}{1-\frac{\mathfrak{h}}{1+\alpha}}\cdot [\Gamma_I] \right)  \right)\\
=&\;1-\frac{1}{c(\oo(\alpha))} \cdot\pi_* \left(\left( \frac{1}{1-\mathfrak{h}}\cdot [\Gamma_I] \right) \otimes \oo(\alpha) \right)\\
=&\;1-\frac{ G \otimes \oo(\alpha) }{{c(\oo(\alpha))}}.
\end{align*}\normalsize This concludes the proof.\qedhere \end{proof}

We remark that Theorem \ref{theorem:SegreMultiProj} generalizes the result of Aluffi \cite[Proposition 3.1]{aluffi2003computing}. 

\subsection{Computing the Projective Degrees}

We now state and prove a result which will allow us to compute the classes $[Y_{\iota}]$ of \eqref{eq:G_biproj}, and hence to compute the Segre class via Theorem \ref{theorem:SegreMultiProj}, using a computer algebra system by calculating the projective degrees $\gamma_{i}$ as in \eqref{eq:projectiveMultiDegrees}.

In this subsection we take $X_{\Sigma}$ to be a smooth projective toric variety with $\dim(X_{\Sigma})=n$; the restriction to $X_{\Sigma}$ a projective (rather than a complete) smooth toric variety is needed for the construction of the complementary cycles used below. As above we let $R=k[x_1,\dots, x_m]$ be the graded total coordinate ring of $X_{\Sigma}$ and let $I$ be an ideal in $R$ which is homogeneous with respect to the grading. Since $X_{\Sigma}$ is a smooth projective toric variety then the codimension one Chow group $A^1(X_{\Sigma})$ will have a basis consisting of nef divisors, see Proposition 6.3.24 of of Cox, Little and Schenck \cite{david2011toric}. As in \S\ref{subsection:SegreSubscheme} we choose generators $I=(f_0,\dots,f_r)$ so that $[V(f_i)]=\alpha \in A^1(X_{\Sigma})$ for all $i$.  Additionally, since $A^1(X_{\Sigma})$ has a basis of nef divisors, we may also assume $\alpha$ is nef.

Take $[Y_{\iota}]  = \sum_{ i=1}^\mu \gamma_{i}^{(\iota)} \omega_i^{(\iota)} $ with the $\gamma_{i}^{(\iota)}\in \ZZ$ being the projective degrees of the rational map $\phi: X_{\Sigma} \dashrightarrow \pp^r$ specified by $\phi:p \mapsto (f_0(p):\cdots:f_r(p))$ and with $ \omega_1^{(\iota)}, \dots, \omega_\mu^{(\iota)}$ a basis of $A^{\iota}(X_{\Sigma})$. Let $b_1,\dots, b_{q} \in A^1(X_{\Sigma})$ denote a fixed nef basis for $A^1(X_{\Sigma})$. Since the divisors $b_j$ are nef we may express the rational equivalence class of a point as a monomial in $b_1,\dots, b_{q}$. In particular let $\zeta=b_1^{c_1}\cdots b_{q}^{c_q}$ denote the rational equivalence class of a point in the dimension zero Chow group, $A_0(X_{\Sigma})$. Similarly we may write the basis elements $ \omega_1^{(\iota)}, \dots, \omega_\mu^{(\iota)}\in A^{\iota}(X_{\Sigma})$ as monomials in $b_1,\dots, b_{q}$. Since the $b_j$ are nef and since $\zeta$ is the class of a point then each exponent of $b_j$ appearing in $\omega_i^{(\iota)}$ must be less or equal to $c_j$, the exponent of $b_j$ in $\zeta$. Hence $\zeta$ is divisible by $\omega_i^{(\iota)}$. Recall from \S\ref{section:intro} that we refer to the class $a_i^{(\iota)}=\zeta/\omega_i^{(\iota)}$ as the \textit{complementary cycle} to $\omega_i^{(\iota)}$. For $b\in A^1(X_{\Sigma})$ let $\ell(b)$ be a general form in $R$ with $ [\ell(b)]=b \in A^*(X_{\Sigma})$. Writing $a_i^{(\iota)}=b_1^{j_1}\cdots b_q^{j_q}$ for $b_1,\dots, b_{q} \in A^1(X_{\Sigma})$ let $L_{a_i^{(\iota)}}$ be the ideal generated by $ j_1$ forms $\ell(b_1)$, $ j_2$ forms $\ell(b_2)$,$\dots$, and $ j_q$ forms $\ell(b_q)$. We refer to the ideal $L_{a_i^{(\iota)}}$ as the \textit{complementary ideal} associated to $\omega_i^{(\iota)}$.

The statement of the theorem below makes use of a dehomogenizing ideal $L_A$ of a (reduced) dimension zero subscheme of $X_{\Sigma}$. If we restrict to toric varieties $X_{\Sigma}$ satisfying the affine codimension condition of \S\ref{subsection:countPoints} we may let $L_A$ be the general dehomogenizing ideal $L_A(\Lambda)$ (for general $\Lambda$) of Theorem \ref{theorem:countingPoints}. 

%As noted in \S\ref{section:intro}, if we restrict to $X_{\Sigma}$ a smooth projective toric variety then the requirement in Theorem \ref{theorem:sat_1mTMultiProj} for $b_1,\dots, b_{q}$ to be nef is redundant since . 

 \begin{theorem} Let $X_{\Sigma}$ be a smooth projective toric variety with $R$ its total homogeneous coordinate ring. Let $I=(f_0,\dots,f_r)$ be an ideal in $R$ as above, whigh generators choosen so that $[V(f_i)]=\alpha \in A^1(X_{\Sigma})$ for all $i$ and so that $\alpha$ is nef. With the notation above we have that the projective degrees of the rational map specified by $\phi:p \mapsto (f_0(p):\cdots:f_r(p))$ are given by \begin{equation}
 \gamma_{i}^{(\iota)}=\dim_k \left( R[T]/ \left(( P_1, \dots, P_{\iota},S)+L_{a_i^{(\iota)}} +L_A \right) \right), 
\label{eq:mainProjDegreFormula}
 \end{equation}
 where the $P_{\ell}$ are general linear combinations of $f_0,\dots, f_r$, $L_{a_i^{(\iota)}}$ is the complementary ideal to $\omega_i^{(\iota)}\in A^{\iota}(X_{\Sigma})$, $L_A$ is a dehomogenizing ideal of $V( P_1, \dots, P_{\iota})\cap V\left( L_{a_i^{(\iota)}} \right)$ and $
S= 1-T\sum_{l=0}^r \vartheta_l f_l$ for general $\vartheta_l \in k$. Further we have that 
\begin{align*}
&[Y_{\iota}]=\alpha^{\iota} \in A^*(X_{\Sigma}) \;\;\; \mathrm{for\;} \iota=0,\dots, \codim(V)-1, \; \mathrm{and} \\
&[Y_{\iota}]=0 \in A^*(X_{\Sigma}) \;\;\; \mathrm{for\;} \iota>\min(n,r).
\end{align*}
\label{theorem:sat_1mTMultiProj} 
\vspace{-8mm}
\end{theorem}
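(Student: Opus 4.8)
The plan is to show that the vector-space dimension on the right-hand side of \eqref{eq:mainProjDegreFormula} computes the intersection number $\deg\!\left([Y_\iota]\cdot a_i^{(\iota)}\right)$, and that this number equals the projective degree $\gamma_i^{(\iota)}$. The second identity is where the nef basis is essential: writing each basis monomial as $\omega_j^{(\iota)}=b_1^{e_1}\cdots b_q^{e_q}$ with $e_k\le c_k$ and the complementary cycle as $a_i^{(\iota)}=\zeta/\omega_i^{(\iota)}$, the product $\omega_j^{(\iota)}\cdot a_i^{(\iota)}$ is a monomial of top degree $n$ in the $b_k$. I would argue that, in $A^n(X_\Sigma)\cong\ZZ\cdot\zeta$, one has $\omega_j^{(\iota)}\cdot a_i^{(\iota)}=\delta_{ij}\,\zeta$: when $i=j$ this is $\zeta$ by construction, and when $i\ne j$ the equality $\sum e_k=\sum f_k=\iota$ (with $f$ the exponent vector of $\omega_j^{(\iota)}$) forces some exponent of the product to exceed the corresponding $c_k$, whence the monomial vanishes through the Stanley--Reisner relations of Proposition \ref{propn:ChowRingDef} (this is transparent for $\pp^{n_1}\times\cdots\times\pp^{n_j}$, where $h_k^{n_k+1}=0$). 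Consequently $[Y_\iota]\cdot a_i^{(\iota)}=\gamma_i^{(\iota)}\,\zeta$, so the complementary cycles form a basis dual to the $\omega_j^{(\iota)}$ under the Poincar\'e pairing and the single intersection number recovers $\gamma_i^{(\iota)}$.

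Next I would give this intersection number a concrete point count. By Lemma \ref{lemma:Yiota}, $Y_\iota=\overline{V(P_1,\dots,P_\iota)-V(I)}$ is generically reduced of pure codimension $\iota$, and the complementary ideal $L_{a_i^{(\iota)}}$ cuts out a general representative of $a_i^{(\iota)}$ of codimension $n-\iota$; by Kleiman transversality the scheme $Y_\iota\cap V\!\left(L_{a_i^{(\iota)}}\right)$ is zero dimensional and reduced, so $\gamma_i^{(\iota)}$ is literally the number of its points in $X_\Sigma$. The remaining issue is to realize this finite set by an ideal in $R[T]$. The subtraction of $V(I)$ is implemented by the Rabinowitsch-type relation $S=1-T\sum_l\vartheta_l f_l$: adjoining $S$ to $(P_1,\dots,P_\iota)+L_{a_i^{(\iota)}}$ localizes away $V\!\left(\sum_l\vartheta_l f_l\right)$. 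Since $\sum_l\vartheta_l f_l\in I$ this removes $V(I)$, and since the finitely many points of $Y_\iota\cap V\!\left(L_{a_i^{(\iota)}}\right)$ lie off $V(I)$, a general choice of the $\vartheta_l$ keeps $\sum_l\vartheta_l f_l$ nonvanishing there, so no good point is lost. As $V(I)\subseteq V(P_1,\dots,P_\iota)$, every point of $V(P_1,\dots,P_\iota)\cap V\!\left(L_{a_i^{(\iota)}}\right)$ off $V(I)$ already lies on $Y_\iota$, so the localization isolates exactly $Y_\iota\cap V\!\left(L_{a_i^{(\iota)}}\right)$.

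Finally I would transfer the count to affine space and read it off as a vector-space dimension. Using the geometric quotient of Theorem \ref{theorem:GeoQuoCox} together with the dehomogenizing ideal $L_A$ (the general dehomogenizing ideal of Theorem \ref{theorem:countingPoints} when the affine codimension condition holds), the number of points of the zero-dimensional set in $X_\Sigma$ equals the number of points of $(P_1,\dots,P_\iota,S)+L_{a_i^{(\iota)}}+L_A$ in $\Spec(R[T])$; as this scheme is reduced, its coordinate ring has $k$-dimension equal to that count, giving \eqref{eq:mainProjDegreFormula}. The boundary statements follow from the description of $Y_\iota$: for $\iota<\codim(V)$, equation \eqref{eq:Yiota_iotaLessThanCodimV} identifies $[Y_\iota]$ with a general complete intersection of $\iota$ sections each of class $\alpha$, so $[Y_\iota]=\alpha^\iota$; for $\iota>\min(n,r)$ either $A^\iota(X_\Sigma)=0$ or, by \eqref{eq:Yiota_gre_min_n_r}, the preimage $\phi^{-1}(\pp^{r-\iota})$ is empty, so $[Y_\iota]=0$. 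The main obstacle is the duality $\omega_j^{(\iota)}\cdot a_i^{(\iota)}=\delta_{ij}\,\zeta$ of the first paragraph for an arbitrary smooth projective toric variety, and, interlocking with it, checking in the second paragraph that the general form $\sum_l\vartheta_l f_l$ genuinely avoids the residual points while still exhausting $V(I)$.
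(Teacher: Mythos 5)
Your proposal reproduces the paper's argument step for step: the reduction of $\gamma_i^{(\iota)}$ to the intersection number $[Y_\iota]\cdot a_i^{(\iota)}=\gamma_i^{(\iota)}\zeta$, the use of Kleiman transversality (licensed by the nef hypothesis, so the relevant line bundles are globally generated) to make $\tilde{Y_\iota}\cap V(L_{a_i^{(\iota)}})$ reduced and zero dimensional, the Rabinowitsch-style excision of $V(I)$ by $S=1-T\sum_l\vartheta_l f_l$ with the $\vartheta_l$ chosen off finitely many hyperplanes so that no residual point is lost, and the final passage to $\mathbb{A}^{m+1}$ via the geometric quotient and $L_A$, with the boundary statements read off from \eqref{eq:Yiota_iotaLessThanCodimV} and \eqref{eq:Yiota_gre_min_n_r}. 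These are exactly Steps 1--3 of the paper's proof, and that part of your write-up is sound.

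The gap is in the one place you go beyond the paper: your proof of the duality $\omega_j^{(\iota)}\cdot a_i^{(\iota)}=\delta_{ij}\,\zeta$. It is not true in a general smooth projective toric variety that a monomial in the nef basis vanishes as soon as one exponent exceeds the corresponding exponent of $\zeta$. Take the Hirzebruch surface $\mathbb{F}_1$ (smooth, projective, and satisfying the affine codimension condition), with nef basis $b_1=H=E+F$, $b_2=F$, where $E^2=-1$, $EF=1$, $F^2=0$. Then $HF=\zeta$, so $c=(1,1)$, the monomial basis of $A^1$ is forced to be $\{H,F\}$, and $a_F=\zeta/F=H$; but $H\cdot a_F=H^2=\zeta\neq 0$, even though the exponent of $b_1$ in $H^2$ exceeds $c_1$. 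So the Stanley--Reisner vanishing you invoke holds only when the presentation in the nef basis truncates variable by variable, as for $\pp^{n_1}\times\cdots\times\pp^{n_j}$ where $h_k^{n_k+1}=0$; in general the relations mix the $b_k$ and the offending monomial survives. You correctly flagged this duality as the main obstacle --- note that the paper itself merely asserts $\omega_j^{(\iota)}a_i^{(\iota)}=0$ for $i\neq j$ ``from the definition'' in Step 1, so you located the fragile point, but your proposed justification does not close it. The consequence is real, not cosmetic: without orthogonality the quotient-ring dimension in \eqref{eq:mainProjDegreFormula} computes $\deg\left([Y_\iota]\cdot a_i^{(\iota)}\right)$, which on $\mathbb{F}_1$ equals $\gamma_1^{(1)}+\gamma_2^{(1)}$ rather than an individual projective degree; to recover the $\gamma_j^{(\iota)}$ one would have to invert the pairing matrix with entries $\deg\left(\omega_j^{(\iota)}\cdot a_i^{(\iota)}\right)$, which both your argument and the paper's implicitly take to be the identity.
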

\begin{proof}
The statement for $\iota< \codim(V)$ is given in \eqref{eq:Yiota_iotaLessThanCodimV} and the statement for $\iota>\min(n,r)$ is given in \eqref{eq:Yiota_gre_min_n_r}. Before proving \eqref{eq:mainProjDegreFormula}, we give a brief overview. The goal is to compute the projective degree $ \gamma_{i}^{(\iota)}$ by constructing an appropriate zero dimensional ideal and applying algebraic methods.% Note that $\gamma_i^{(\iota)}$ is the coefficient of $\omega_i^{(\iota)}$ in the class $[Y_{\iota}]$; intersection theory (think in particular a `moving lemma', see Eisenbud and Harris \cite[Appendix A]{eisenbud20163264}) tells us that this coefficient corresponds to the number of points in an intersection of $Y_{\iota}$ with a sufficiently general variety of class $a_i^{(\iota)}$. 

By appropriate choices of general forms in $R$ we may construct the zero dimensional ideal corresponding to the class $[Y_{\iota}]\cdot a_i^{(\iota)}$. Lemma \ref{lemma:Yiota} gives us an explicit expression for $Y_{\iota}$, in light of this we need to consider an ideal defined by general linear combinations of the generators of $I$, but we must ensure no points in $V(I)$ are included. To accomplish this we use an argument in the style of the Rabinowitsch trick; that is we add a variable $T$ and an equation which cannot be satisfied by points in $V(I)$ given a general choice of some scalars. The structure of this added equation necessitates working in the affine algebraic setting. This will be accomplished by adding the dehomogenizing ideal $L_A$ (here we employ the geometric quotient construction of $X_{\Sigma}$, see Theorem \ref{theorem:GeoQuoCox}). It is important to remember that while the varieties defined in the proof may depend on a choice of forms or constants the number of points in the resulting reduced dimension zero scheme will not (provided the choice is general). 

Following this outline our argument will have three main steps. The first step is the construction of a subscheme of $X_{\Sigma}$ whose closure corresponds to the intersection product $[Y_{\iota}]\cdot a_i^{(\iota)}$. The second step is writing this as a closed subscheme of $X_{\Sigma} \times \mathbb{A}^1$. The final step is to dehomogenize via the dehomogenizing ideal $L_A$ and obtain an intersection in affine space. 

\textbf{Step 1} (writing an intersection corresponding to $[Y_{\iota}]\cdot a_i^{(\iota)}$). Take $\iota$ such that $\codim(V) \leq \iota \leq \min(n,r)$. We wish to compute the class $[Y_{\iota}]$ in the Chow ring $A^*\left( X_{\Sigma}\right)$; by Lemma \ref{lemma:Yiota} $Y_{\iota}$ is the closure of the open set $
\tilde{Y_{\iota}}=V(P_1, \dots, P_{\iota})-V(I).
$ From the definition of $a_i^{(\iota)}$ we have that $\omega_j^{(\iota)} a_i^{(\iota)} =0$ for $i\neq j$ and, hence, that $
[Y_{\iota}]\cdot a_i^{(\iota)} = \gamma_i^{(\iota)} \zeta
$. Since all relevant line bundles are generated by global sections (due to our nef assumption) it follows from Kleiman's transversality theorem (see Kleiman \cite[Theorem 2]{kleiman1974transversality} or Eisenbud and Harris \cite[Theorem 1.7]{eisenbud20163264}) that there exist Zariski open dense sets $U_1,U_2$ so that for constants $\lambda_{l,j}$ and linear forms $\ell{(b_j)}$ chosen in $U_1$ and $U_2$ respectively we have that $$\tilde{Y_{\iota}}\cap V(L_{a_i^{(\iota)}})=\bigcap_{l=1}^{{\iota}}V \left(\sum_{j=0}^{r} \lambda_{l,j} f_j\right) \cap V(L_{a_i^{(\iota)}}) -V(f_0,\dots ,f_r)$$ is smooth (scheme-theoretically), reduced, and has dimension $0$. In other words, the zero dimensional set associated to the class $\gamma_i^{(\iota)} \zeta $ is given by $\tilde{Y_{\iota}}\cap V(L_{a_i^{(\iota)}})$, meaning to find $ \gamma_i^{(\iota)} $ we must find the degree of $\tilde{Y_{\iota}}\cap V(L_{a_i^{(\iota)}})$, i.e.\ the number of points in $\tilde{Y_{\iota}}\cap V({L_{a_i^{(\iota)}}})$. Hence we wish to compute $$ \gamma_i^{(\iota)} =\mathrm{card} \left( \bigcap_{l=1}^{{\iota}}V \left(\sum_{j=0}^{r} \lambda_{l,j} f_j\right) \cap V(L_{a_i^{(\iota)}}) -V(f_0,\dots ,f_r)\right),$$ where $\mathrm{card}$ denotes the number of points in a zero dimensional set.

%Now if we choose sufficiently general linear forms (so that all intersections are transverse, which is possible since all relevant line bundles are generated by global sections due to our nef assumption) then the zero dimensional set associated to $\gamma_i^{(\iota)} \zeta $ is given by $$
%\tilde{Y_{\iota}}\cap V(L_{a_i^{(\iota)}})=\left( V( P_1, \dots, P_{\iota})-V(I) \right) \cap V(L_{a_i^{(\iota)}})
%$$ and hence to find $ \gamma_i^{(\iota)} $ we must find the degree of $\tilde{Y_{\iota}}\cap V(L_{a_i^{(\iota)}})$, i.e.\ the number of points in $\tilde{Y_{\iota}}\cap V({L_{a_i^{(\iota)}}})$. Hence we wish to compute $$ \gamma_i^{(\iota)} =\mathrm{card} \left( \bigcap_{l=1}^{{\iota}}V \left(\sum_{j=0}^{r} \lambda_{l,j} f_j\right) \cap V(L_{a_i^{(\iota)}}) -V(f_0,\dots ,f_r)\right),$$ where $\mathrm{card}$ denotes the number of points in a zero dimensional set. 
%By Kleiman's transversality theorem (see Kleiman \cite[Theorem 2]{kleiman1974transversality} or Eisenbud and Harris \cite[Theorem 1.7]{eisenbud20163264}) we have that there exists Zariski open dense sets $U_1,U_2$ so that for constants $\lambda_{l,j}$ and linear forms $\ell{(b_j)}$ chosen in $U_1$ and $U_2$ respectively we have that $$\widetilde{W}=\bigcap_{l=1}^{{\iota}}V \left(\sum_{j=0}^{r} \lambda_{l,j} f_j\right) \cap V(L_{a_i^{(\iota)}}) -V(f_0,\dots ,f_r)$$ is smooth (scheme-theoretically), reduced, and has dimension $0$. 

It is important at this point to note that, for our purposes here, since we just wish to compute projective degrees we are only interested in the \textit{number} of points in $\tilde{Y_{\iota}}\cap V(L_{a_i^{(\iota)}})$ and not in the points themselves. Hence while the scheme $\tilde{Y_{\iota}}\cap V(L_{a_i^{(\iota)}})$ does depend on the choice of $\lambda_{l,j}$ and $\ell{(b_j)}$ the number of points in $\tilde{Y_{\iota}}\cap V(L_{a_i^{(\iota)}})$ does not, provided we choose $\lambda_{l,j}$ and $\ell{(b_j)}$ from the Zariski dense sets $U_1$ and $U_2$ prescribed by Kleiman's transversality theorem. Hence we obtain the desired projective degree for any general choice of $\lambda_{l,j}$ and $\ell{(b_j)}$.

\textbf{Step 2} (moving to $X_{\Sigma}\times \mathbb{A}^1$). Let $
W=\bigcap_{l=1}^{{\iota}}V \left(\sum_{j=0}^{r} \lambda_{l,j} f_j\right) \cap V(L_{a_i^{(\iota)}}).
$ In what follows we fix $\lambda_{l,j}$ and $\ell{(b_j)}$ so that they lay in the desired sets $U_1$ and $U_2$.  Hence we may write the set $\tilde{Y_{\iota}}\cap V(L_{a_i^{(\iota)}})$ as a finite collection of points, that is we may write $W -V(f_0,\dots ,f_r)=\left\lbrace p_0,\dots, p_s\right\rbrace$. Then $$U_3=\pp^{r} - \bigcup_{i=0}^sV \left(f_0(p_i)y_0+\cdots +f_r(p_i)y_r \right)$$ is open and dense in $\pp^{r}=\Proj(k[y_0,\dots,y_r])$, because $(f_0(p_i),\dots,f_r(p_i))\neq (0,\dots,0)$ for all $i$. Take $ \vartheta=(\vartheta_0,\dots ,\vartheta_r)\in U_3$; then $$\left( W\cap V \left( \sum_{j=0}^r \vartheta_{j} f_j \right) \right) -V(f_0,\dots ,f_r)$$ is empty. Now consider the ideals $L_{a_i^{(\iota)}}$ and $ \left(\sum_{j=0}^r \lambda_{l,j} f_j \right)$ as ideals in the ring $ R[T]$, and define $V_S=V(S)$ where $ S= 1-T \cdot \sum_{j=0}^r \vartheta_{j} f_j \in R[T].$ 

For a point $p\in V(f_0,\dots,f_r)$ we have that $f_j(p)=0,$ for $j=0,1,\dots, r$ which implies that $p$ is not in $V_S$ since $p$ cannot be a solution to the equation $ 1-T \cdot \sum_{j=0}^r \vartheta_{j} f_j=0$. Now take $p \in W-V(f_0,\dots ,f_r)$ then $$T_p=\frac{1}{\sum_{j=0}^r \vartheta_{j} f_j(p)}$$ is well defined since for $\vartheta\in U_3$ we have that $W\cap V \left( \sum_{j=0}^r \vartheta_{j} f_j \right)  -V(f_0,\dots ,f_r) $ is empty, so $(p,T_p) \in V_S$. Now let $\widehat{W} \subset  X_{\Sigma} \times \mathbb{A}^1$ be the variety given by a linear embedding of $W$ in $ X_{\Sigma} \times \mathbb{A}^1$, where $\mathbb{A}^1=\Spec(k[T])$. We have $
\pi(\widehat{W}\cap V_S)=W-V(f_0,\dots,f_r),
$ where $\pi $ is the projection $\pi:  X_{\Sigma}  \times \mathbb{A}^1 \to  X_{\Sigma} $, and in particular $
\mathrm{card}(\widehat{W}\cap V_S)=\mathrm{card}(W-V(f_0,\dots,f_r)).
$

\textbf{Step 3} (dehomogenizing $X_{\Sigma}$). By assumption $L_A$ is chosen to be a dehomogenizing ideal (if $X_{\Sigma}$ satisfies the affine codimension condition we could take $L_A=L_A(\Lambda)$ to be the general dehomogenizing ideal prescribed by Theorem \ref{theorem:countingPoints}), hence rather than considering the intersection $\widehat{W}\cap V_S $ in $ X_{\Sigma} \times \mathbb{A}^1$ we may consider $W$ to be a set of reduced points in $ \mathbb{A}^{m}$ via the geometric quotient construction of $X_{\Sigma}$ (see Theorem \ref{theorem:GeoQuoCox}). That is we intersect with the vanishing of the dehomogenizing ideal $V(L_A)$ in $\mathbb{A}^m$ which gives $$W= \bigcap_{\ell=0}^{\iota}V \left(\sum_{j=0}^{r} \lambda_{\ell,j} f_j\right) \cap V({L_{a_i^{(\iota)}}}) \cap V(L_A)\subset \mathbb{A}^{m}$$ and we then consider the intersection $\widehat{W}\cap V_S$ in $\mathbb{A}^{m+1}$. As the points in $\widetilde{W}$ are reduced the cardinality of the zero dimensional set $$ \bigcap_{\ell=0}^{{\iota}}V \left(\sum_{j=0}^{r} \lambda_{\ell,j} f_j\right) \cap V(L_{a_i^{(\iota)}}) \cap V(L_A) \cap V_S \subset \mathbb{A}^{m+1}$$ is given by the vector space dimension of $
 R[T]/(( P_1, \dots, P_{\iota},S)+L_{a_i^{(\iota)}}+L_A).
$\qedhere\end{proof}
\begin{example}
Let $X_{\Sigma}$ be the smooth Fano fourfold of Example \ref{example:smoothFanoToric}. Recall that the total coordinate ring is $R=k[x_0,\dots, x_5]$, with $\ZZ^2\cong {\rm Pic}(X_{\Sigma})$ grading given by $\deg(x_0)=(1,-1),$ $\deg(x_1)=\deg(x_2)=(1,0),$ $\deg(x_3)=\deg(x_4)=\deg(x_5)=(0,1).$ Using the isomorphism of Proposition \ref{propn:ChowRingDef} we have that $$A^*(X_{\Sigma})\cong \ZZ[h_0,\dots,h_5]/(h_0h_1h_2,h_3h_4h_5,h_0-h_2+h_3,h_1-h_2,h_3-h_5,h_4-h_5)$$ with $h_i=[V(\rho_i)]$ where $\Sigma(1)=\left\lbrace\rho_0,\dots,\rho_5 \right\rbrace$ are the generating rays. The divisors $h_2$ and $h_5$ are nef and form a basis for $A^1(X_{\Sigma})\cong {\rm Pic}(X_{\Sigma})$, we write classes in $A^*(X_{\Sigma})$ in terms of this basis. Consider the ideal in $R$ given by $I=(x_2^3x_3x_4^9-15x_2^3x_3^5x_5^5,5x_1^2x_2x_3^5x_4^5+x_1^2x_2x_3x_5^9)$ and the corresponding subscheme $V=V(I)$ of $X_{\Sigma}$. $V$ is a singular subscheme of $X_{\Sigma}$ and $\codim(V)=1$. Using Theorem \ref{theorem:sat_1mTMultiProj} and the general dehomogenizing ideal of Example \ref{example:smoothFanoToric} we compute the projective degrees of the associated rational map and obtain the class $G$ (as in \eqref{eq:G_biproj}) to be $$
G=\sum_{\iota}[Y_{\iota}]=1+2h_2+10h_5 \in A^*(X_{\Sigma}).
$$ For this $V$ the class $\alpha$ in Theorem \ref{theorem:SegreMultiProj} is given by $ \alpha=3h_2+10h_5$, applying Theorem \ref{theorem:SegreMultiProj} gives $$
s(V,X_{\Sigma})=16083h_2^2h_5^2-414h_2^2h_5-1680h_2h_5^2+3h_2^2+46h_2h_5+120h_5^2+h_2-h_5 \in A^*(X_{\Sigma}).
$$
\label{example:SegreSmoothFano}
\end{example}
\subsection{The $c_{SM}$ Class of Complete Intersections}\label{subsection:CSM_Complete_int}
In this subsection we prove Theorem \ref{theorem:csm_complete_int_multi_proj} which extends the result of the author \cite[Theorem 3.3]{HelmerTCS} to the case where $V=V(I)$ is a subscheme of a smooth complete toric variety  $ X_{\Sigma}$. While this theorem can be seen as a repacking of the result of Proposition \ref{propn:AluffiGeneralCSMSegre_Relation} (this is discussed in the proof of Theorem \ref{theorem:csm_complete_int_multi_proj} below) it is nonetheless a helpful result for computational purposes. In particular this result allows us to avoid the inclusion/exclusion procedure of Proposition \ref{propn:csm_higher_codim} in certain cases. 

Inclusion/exclusion has two significant computational costs. First, given an ideal $I$ with $r$ generators, we must compute $2^r$ Segre classes of the corresponding singularity subschemes. Second, since we must consider unions, the generators of the ideals of the singularity subschemes considered may have substantially higher degree than the degree of the generators of $I$. Thus it is desirable to avoid the inclusion/exclusion procedure whenever possible. The result of Theorem \ref{theorem:csm_complete_int_multi_proj} gives us a method to do this in some cases. Using this we construct Algorithm \ref{algorithm:csm_Complete_Int_multi_proj}. 
\begin{theorem} Let $ X_{\Sigma}$ be a dimension $n$ smooth complete toric variety and let $V=V(f_0,\dots,f_r)$ be a possibly singular global complete intersection subscheme of $ X_{\Sigma}$ which can be written as the intersection of a smooth subvariety and an hypersurface in $X_{\Sigma}$. Let the hypersurfaces $$V_0=V(f_0),\dots,V_r=V(f_r)$$ be ordered such that we have that $V_0\cap \cdots \cap  V_{r-1}$ is smooth. In $A^*(X_{\Sigma})\cong \ZZ[x_1,\dots,x_m]/(\mathcal{I}+\mathcal{J})$ (here we write $A^*(X_{\Sigma})$ in the notation of Proposition \ref{propn:ChowRingDef}) we have \scriptsize \begin{equation}
c_{SM}(V)= \frac{(1+x_1)\cdots (1+x_m) }{(1+[V_0])\cdots(1+[V_r])} \cdot \left([V_0]\cdots [V_r]+ \left( (-1)^r\sum_{j=0}^r\sum_{i=0}^j {r-i \choose j-i} (-1)^i [V_r]^{j-i}c_i \right) \cdot \left( \sum_{i=0}^n \frac{(-1)^is^{(i)}(Y,X_{\Sigma})}{(1+[V_r])^i}\right) \right), \label{eq:csm_complete_int_multi_proj}
\end{equation} \normalsize where $c_i$ is the dimension $i$ component of $(1+[V_0])\cdots(1+[V_r])$,  $s^{(i)}(Y,X_{\Sigma})$ is the codimension $i$ component of the Segre class of $Y$ in $X_{\Sigma}$ where $Y$ denotes the singularity subscheme of $V$ and $x_i=[V(\rho_i)] \in A^*(X_{\Sigma})$ where $\left\lbrace \rho_1,\dots, \rho_m \right\rbrace =\Sigma(1)$ are the generating rays. 
\label{theorem:csm_complete_int_multi_proj}
\end{theorem}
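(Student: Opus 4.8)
The plan is to reduce the statement to Aluffi's hypersurface formula (Proposition \ref{propn:AluffiGeneralCSMSegre_Relation}) applied \emph{inside} the smooth ambient variety $M'=V_0\cap\cdots\cap V_{r-1}$, and then to transport the resulting identity from $A^*(M')$ to $A^*(X_{\Sigma})$ by adjunction and the projection formula. By hypothesis $M'$ is smooth; since $V=V(f_0,\dots,f_r)$ is a global complete intersection, $V$ is the Cartier divisor cut out on $M'$ by $f_r$, so $V$ is a hypersurface in the nonsingular variety $M'$ with $[V]_{M'}=\iota^*[V_r]\cap[M']$, where $\iota\colon M'\hookrightarrow X_{\Sigma}$ is the inclusion. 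Writing $Y$ for the singularity subscheme of this hypersurface (equivalently the singularity subscheme of $V$, which is supported away from the smooth locus contributed by $V_0,\dots,V_{r-1}$), Proposition \ref{propn:AluffiGeneralCSMSegre_Relation} with ambient space $M'$ gives
\begin{equation*}
c_{SM}(V)=c(T_{M'})\cap\left(s(V,M')+\sum_{i=0}^{n-r}\sum_{j=0}^{n-r-i}\binom{n-r-i}{j}[V]_{M'}^{\,j}\cap(-1)^{n-r-i}s_{i+j}(Y,M')\right),
\end{equation*}
a class on $V$ that we push forward to $A^*(X_{\Sigma})$.

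First I would collect the three translation rules needed to rewrite every ingredient in $A^*(X_{\Sigma})$. (i) Adjunction for the smooth complete intersection $M'$, together with the fact that $c(T_{X_{\Sigma}})=(1+x_1)\cdots(1+x_m)$ for a smooth complete toric variety, gives $c(T_{M'})=\iota^*\!\left(\frac{(1+x_1)\cdots(1+x_m)}{(1+[V_0])\cdots(1+[V_{r-1}])}\right)$. (ii) Since $V$ is a Cartier divisor in $M'$, one has $s(V,M')=\frac{1}{1+[V_r]}\cap[V]_{M'}$, while $\iota_*[V]_{M'}=[V_0]\cdots[V_r]$ and $\iota_*[M']=[V_0]\cdots[V_{r-1}]$. (iii) Because $M'$ is regularly embedded in $X_{\Sigma}$ with $N_{M'/X_{\Sigma}}=\bigoplus_{k=0}^{r-1}\oo(V_k)|_{M'}$, the compatibility of Segre classes under a regular embedding (which follows from the normal bundle exact sequence) yields $s(Y,M')=c(N_{M'/X_{\Sigma}})\cap s(Y,X_{\Sigma})=(1+[V_0])\cdots(1+[V_{r-1}])\cap s(Y,X_{\Sigma})$ in $A_*(Y)$. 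Applying the projection formula, rule (i) combined with (ii) turns $\iota_*\big(c(T_{M'})\cap s(V,M')\big)$ into exactly $\frac{(1+x_1)\cdots(1+x_m)}{(1+[V_0])\cdots(1+[V_r])}\cdot[V_0]\cdots[V_r]$, which is the first summand in the bracket of \eqref{eq:csm_complete_int_multi_proj}. In the singularity term the factor $(1+[V_0])\cdots(1+[V_{r-1}])$ from (iii) cancels the same factor appearing in $c(T_{M'})$ from (i); restoring the common prefactor $\frac{(1+x_1)\cdots(1+x_m)}{(1+[V_0])\cdots(1+[V_r])}$ then forces the compensating factor $(1+[V_0])\cdots(1+[V_r])$ to appear inside the bracket, which is precisely the origin of the coefficients $c_i$, the components of $(1+[V_0])\cdots(1+[V_r])$.

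The remaining work, and the main obstacle, is the purely formal repacking of Aluffi's double sum into the closed form displayed in the theorem. Converting the dimension-indexed $s_{i+j}(Y,M')$ to the codimension-indexed components $s^{(i)}(Y,X_{\Sigma})$ via rule (iii), and absorbing the powers $[V]_{M'}^{\,j}$ together with the surviving factor $(1+[V_r])$ using the tensor operation $\otimes\,\oo(V_r)$ of \eqref{eq:ALuffi_chow_ring_tensor_notations}, reorganizes the sum $\sum_{i,j}\binom{n-r-i}{j}(-1)^{n-r-i}[V_r]^{j}(\cdots)$ into the factor $\big(\sum_{i=0}^n\frac{(-1)^i s^{(i)}(Y,X_{\Sigma})}{(1+[V_r])^i}\big)$ multiplied by $(-1)^r\sum_{j=0}^r\sum_{i=0}^j\binom{r-i}{j-i}(-1)^i[V_r]^{j-i}c_i$. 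The care here is in tracking the dimension-to-codimension bookkeeping (the shift by $\dim M'=n-r$), in checking that the binomial coefficients $\binom{n-r-i}{j}$ collapse to $\binom{r-i}{j-i}$ once the $(1+[V_r])$ powers are cleared, and in confirming that out-of-range components vanish so that the finite sums in \eqref{eq:csm_complete_int_multi_proj} are exactly the surviving truncations. Once this combinatorial identity is verified, the two pieces assemble into \eqref{eq:csm_complete_int_multi_proj}, completing the proof.
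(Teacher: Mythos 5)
Your proposal is correct in outline but takes a genuinely different route from the paper. The paper's proof is short: it observes, as you do, that $V(f_r)$ is a hypersurface in the smooth variety $M'=V_0\cap\cdots\cap V_{r-1}$, so that Proposition \ref{propn:AluffiGeneralCSMSegre_Relation} applies, but it then outsources the entire translation back to the ambient space to Theorem 1.1 of Fullwood \cite{fullwood2014milnor}, which directly gives $c_{SM}(V)=c(T_{X_{\Sigma}})\cap s(V,X_{\Sigma})+\frac{(-1)^r c(T_{X_{\Sigma}})}{c(\mathcal{E})}\cdot c(\mathcal{E}^{\vee}\otimes\mathcal{L})\cdot\left(\sum_{i=0}^n\frac{(-1)^i s^{(i)}(Y,X_{\Sigma})}{(1+[V_r])^i}\right)$ with $\mathcal{E}$ the bundle associated to $V_0\cap\cdots\cap V_r$ and $\mathcal{L}=\oo(V_r)$. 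The stated formula then follows by substituting $c(T_{X_{\Sigma}})=(1+x_1)\cdots(1+x_m)$, the complete-intersection Segre class $s(V,X_{\Sigma})=[V_0]\cdots[V_r]/\bigl((1+[V_0])\cdots(1+[V_r])\bigr)$, and expanding $c(\mathcal{E}^{\vee}\otimes\mathcal{L})$ via Remark 3.2.3 of Fulton \cite{fulton} --- which is exactly where the coefficients $\binom{r-i}{j-i}$ and the $c_i$ come from. You instead re-derive the content of Fullwood's theorem by hand: Aluffi's formula inside $M'$, adjunction for $c(T_{M'})$, the projection formula, and the comparison $s(Y,M')=c(N_{M'/X_{\Sigma}}|_Y)\cap s(Y,X_{\Sigma})$. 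Your route buys self-containedness; the paper's buys that the dimension-to-codimension bookkeeping and the binomial collapse you yourself flag as ``the main obstacle'' are already done in the cited references.

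Two caveats. First, your rule (iii) is the crux, and your justification --- that it ``follows from the normal bundle exact sequence'' --- is inadequate: that argument is only available when $Y$ itself is regularly embedded, which the singularity subscheme essentially never is. The correct justification works at the level of normal cones: because $M'$ is smooth (hence locally a product inside $X_{\Sigma}$) there is an exact sequence of cones $0\to C_YM'\to C_YX_{\Sigma}\to N_{M'/X_{\Sigma}}|_Y\to 0$, and then Example 4.1.6 of Fulton \cite{fulton} yields $s(Y,X_{\Sigma})=c(N_{M'/X_{\Sigma}}|_Y)^{-1}\cap s(Y,M')$. Smoothness of $M'$ along $Y$ is essential, not cosmetic: if $Y$ is the reduced node of a nodal plane curve $M'\subset\pp^2$, then $s(Y,M')=2[\mathrm{pt}]$ while $c(N_{M'/\pp^2}|_Y)\cap s(Y,\pp^2)=[\mathrm{pt}]$, so the rule fails for a merely regularly embedded $M'$. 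Second, the combinatorial repacking in your final paragraph is asserted rather than carried out; it is true --- it amounts to expanding $c(\mathcal{E}^{\vee}\otimes\mathcal{L})=\prod_{k=0}^{r-1}\bigl(1+[V_r]-[V_k]\bigr)$ (the $k=r$ factor being trivial) in terms of the Chern classes $c_i$ of the full rank-$(r+1)$ bundle $\mathcal{E}$, which is precisely the Fulton Remark 3.2.3 step in the paper --- but as written your argument is an outline at that point, whereas the paper closes it by citation.
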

\begin{proof}
By assumption $ V(f_r)$ is a hypersurface in the smooth variety $V_0\cap \cdots \cap  V_{r-1}$, this allows us to apply Proposition \ref{propn:AluffiGeneralCSMSegre_Relation}. A quick simplification of the resulting expression in $A^*(X_{\Sigma})$ is given by employing Theorem 1.1 of Fullwood \cite{fullwood2014milnor}. In our case the result of \cite[Theorem 1.1]{fullwood2014milnor} gives the following,
\begin{equation}
c_{SM}(V)=c(T_{X_{\Sigma}})\cap s(V,X_{\Sigma})+\frac{(-1)^r c(T_{X_{\Sigma}})}{c(\mathcal{E}) } \cdot \left( c( \mathcal{E}^{\vee} \otimes \mathcal{L}) \cdot \left( \sum_{i=0}^n \frac{(-1)^is^{(i)}(Y,X_{\Sigma})}{(1+[V_r])^i}\right) \right)
\end{equation}
where $\mathcal{E}$ is the line bundle associated to $V_0 \cap \cdots \cap V_{r}$ and $\mathcal{L}$ is the line bundle associated to $V_r $. Hence $c(\mathcal{E})=(1+[V_0])\cdots(1+[V_r])$ and $c(\mathcal{L})=1+[V_r]$. Note that we have $c(T_{X_{\Sigma}})=(1+x_1)\cdots (1+x_m) $, see Cox \cite[Proposition 13.1.2]{david2011toric}. Since $V$ is a complete intersection we have that $$
s(V,X_{\Sigma})=\frac{[V_0]\cdots [V_r]}{(1+[V_0])\cdots(1+[V_r])}. 
$$  Using Remark 3.2.3 of Fulton \cite{fulton} to expand $ c( \mathcal{E}^{\vee} \otimes \mathcal{L})$ gives the expression in \eqref{eq:csm_complete_int_multi_proj}. 
\qedhere \end{proof}

If a complete intersection subscheme $V$ of $X_{\Sigma}$ does not satisfy the assumptions of Theorem \ref{theorem:csm_complete_int_multi_proj} the theorem may still be applied in conjunction with a specialized form of inclusion/exclusion to reduce the total number of inclusion/exclusion steps required. We state a specialized form of the inclusion/exclusion property that is helpful in such cases in Proposition \ref{propn:Inclusion_exclusion_singular_part_only} below; this result follows directly from the inclusion/exclusion property of the $c_{SM}$ class.

\begin{propn}
Let $X$ be a smooth variety. Let $Z\subset X_{\Sigma}$ be smooth (scheme-theoretically) and let $ V_1=V(f_1),$ $V_2=V(f_2)$ be singular hypersurfaces in $X_{\Sigma}$. If $V=Z\cap V_1\cap V_2$, then we have \begin{equation}
c_{SM}(V)=c_{SM}(Z\cap V_1) + c_{SM}(Z\cap V_2)-c_{SM}(Z\cap (V_1 \cup V_2)) \in A^*(X), \label{eq:IE_to_extend_on_sing_gen_CSM}
\end{equation}
 here $V_1 \cup V_2$ is the scheme generated by $f_1 \cdot f_2$.  \label{propn:Inclusion_exclusion_singular_part_only}
\end{propn}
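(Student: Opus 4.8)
The plan is to read this off as a direct specialization of the inclusion/exclusion relation \eqref{eq:csm_inclusion_exclusion} for $c_{SM}$ classes, applied to a cleverly chosen pair of subschemes rather than to $V_1,V_2$ themselves. The only genuine content is repackaging the triple intersection $V=Z\cap V_1\cap V_2$ as a pairwise intersection and then recognizing the resulting union term.

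First I would set $W_1=Z\cap V_1$ and $W_2=Z\cap V_2$, regarded as subschemes of the smooth variety $X_{\Sigma}$, and observe that $V=Z\cap V_1\cap V_2=W_1\cap W_2$. Applying the inclusion/exclusion property \eqref{eq:csm_inclusion_exclusion} to the pair $W_1,W_2$ then gives
\begin{equation*}
c_{SM}(W_1\cap W_2)=c_{SM}(W_1)+c_{SM}(W_2)-c_{SM}(W_1\cup W_2)\quad\text{in }A^*(X).
\end{equation*}
The remaining step is to identify the union term. Using the set-theoretic distributivity of intersection over union on the underlying supports, $W_1\cup W_2=(Z\cap V_1)\cup(Z\cap V_2)=Z\cap(V_1\cup V_2)$, and $V_1\cup V_2=V(f_1f_2)$ is exactly the scheme in the statement. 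Substituting $W_1\cap W_2=V$ and this expression for $W_1\cup W_2$ yields \eqref{eq:IE_to_extend_on_sing_gen_CSM}.

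The justification of the distributive identity is where I would be careful, but it presents no real obstacle: since $c_{SM}(V)=c_{SM}(\mathbf{1}_V)=c_{SM}(\mathbf{1}_{V_{red}})$ as noted after the definition of the class, the $c_{SM}$ class depends only on the underlying reduced support. Hence any scheme-structure ambiguity in the intersections and unions above is irrelevant, and only the pointwise constructible-function identity $\mathbf{1}_{W_1\cap W_2}=\mathbf{1}_{W_1}+\mathbf{1}_{W_2}-\mathbf{1}_{W_1\cup W_2}$ on supports matters; this is precisely the identity that \eqref{eq:csm_inclusion_exclusion} is the image of under the natural transformation $c_{SM}$. I would also remark that the smoothness hypothesis on $Z$ plays no role in establishing the identity itself; it is imposed only so that the proposition can be combined with Theorem \ref{theorem:csm_complete_int_multi_proj} to actually evaluate the three terms $c_{SM}(Z\cap V_1)$, $c_{SM}(Z\cap V_2)$, and $c_{SM}(Z\cap(V_1\cup V_2))$ efficiently, each being the $c_{SM}$ class of a hypersurface inside a smooth variety.
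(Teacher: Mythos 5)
Your proof is correct and takes exactly the route the paper intends: the paper gives no separate argument, stating only that the proposition ``follows directly from the inclusion/exclusion property of the $c_{SM}$ class,'' and your derivation---applying \eqref{eq:csm_inclusion_exclusion} to $W_1=Z\cap V_1$, $W_2=Z\cap V_2$ and identifying $W_1\cup W_2=Z\cap(V_1\cup V_2)$ on supports---is precisely that omitted verification, with the reduction-to-supports point and the observation about the role of smoothness of $Z$ correctly handled. Nothing is missing.
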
 
When $Z$ is a smooth complete intersection subscheme of $X_{\Sigma}$ each of the terms in (\ref{eq:IE_to_extend_on_sing_gen_CSM}) can be computed using Theorem \ref{theorem:csm_complete_int_multi_proj}. This gives an algorithm which reduces the number of inclusion/exclusion steps required when working with subschemes which are the intersection of a smooth complete intersection subscheme and some collection of hypersurfaces. 
\section{Algorithms}\label{section:SegreCSMMultiProj}
In this section we summarize how the results of \S\ref{section:mainresultsMultiProj} can be used to construct algorithms to compute characteristic classes of subschemes of smooth complete toric varieties. To simplify the presentation of the algorithms in this section we restrict to the case where $X_{\Sigma}$ is a smooth projective toric variety satisfying the affine codimension condition of \S\ref{subsection:countPoints}.

%This restriction to $X_{\Sigma}$ projective could be relaxed, however more restrictive assumptions on $V$ would then be required. In particular we would need to be able to write all complementary cycles associated to $V$ (see the definitions preceding Theorem \ref{theorem:sat_1mTMultiProj}) as a product of nef divisors so that Theorem \ref{theorem:sat_1mTMultiProj} could be applied to compute all projective degrees appearing in \eqref{eq:projectiveMultiDegrees} for all $[Y_{\iota}]$ appearing in Theorem \ref{theorem:SegreMultiProj} for the given $V$.

It should be recalled, as discussed in \S\ref{section:intro}, that all algorithms presented here are probabilistic; this is because they involve a general choice of some scalars which will in practice be replaced by a random choice. Based on experimental results, making random choices from a subset of the coefficient field with more than $25000$ elements seems to result in a failure rate below 1 in 2000. The failure rate can be further decreased by choosing from larger subsets.

Let $n=\dim(X_{\Sigma})$ and let $R=k[x_{\rho_1},\dots, x_{\rho_m}]$ be the total coordinate ring of $X_{\Sigma}$. The class of a point will be denoted $\zeta \in A_0(X_{\Sigma}) $, and $ V=V(I)$ will be a subscheme of $X_{\Sigma}$ defined by an ideal $I=(f_0,\dots, f_r)$ homogeneous with respect to the grading on $R$. The generating rays of the fan $\Sigma$ will be denoted $\left\lbrace \rho_1,\dots, \rho_m \right\rbrace =\Sigma(1)$. All algorithms will represent the Chow ring $A^*(X_{\Sigma})$ via the isomorphism of Proposition \ref{propn:ChowRingDef}, that is $
A^*(X_{\Sigma})\cong \ZZ[x_1,\dots,x_m]/(\mathcal{I}+\mathcal{J})
$ for $\mathcal{I}$ and $\mathcal{J} $ as in Proposition \ref{propn:ChowRingDef}. Let $B$ be the irrelevant ideal of $X_{\Sigma}$ and let $\mathfrak{p}= \left\lbrace \mathbf{p_1},\dots,\mathbf{p_{\nu}} \right\rbrace$ be the set of all (unique) primitive collections of rays (each $\mathbf{p_{\ell}}$ is a set of rays in $\Sigma(1)$) so that $$B= \bigcap_{ \left\lbrace \rho_1, \dots ,{\rho_s} \right\rbrace \in \mathfrak{p} } (x_{\rho_1}, \dots, x_{\rho_s}).
$$For $\beta\in A^1(X_{\Sigma})$ let $R.\mathrm{random}(\beta)$ be a function which creates a general polynomial $f$ in $R$ such that $[V(f)]=\beta\in A^*(X_{\Sigma})$.

We first present Algorithm \ref{algorithm:SegreAlgMultiProj}, an algorithm to compute the Segre class of a subscheme of $X_{\Sigma}$ via the results of Theorems \ref{theorem:countingPoints}, \ref{theorem:SegreMultiProj}, and \ref{theorem:sat_1mTMultiProj}. In Algorithm \ref{algorithm:SegreAlgMultiProj} we will assume, without loss of generality, that the generators of $I$ are chosen so that $[V(f_j)]=\alpha$ for all $j$. Further suppose that $\alpha$ is nef. 

\begin{algorithm} 
 \textbf{Input:} A smooth projective toric variety $X_{\Sigma}$ and an ideal $I=(f_0,\dots, f_r)$ defining a subscheme $V=V(I)$ of $X_{\Sigma}$ as above.\newline 
 \textbf{Output:} $s(V,X_{\Sigma} )$ in $A^*(X_{\Sigma} )$. \vspace{-4mm}
\begin{itemize} 
\item Let $ P_{j}=\sum_{l=0}^r  \lambda_{j,l} f_l \;$ for $j=1,\dots,n$ and for general $ \lambda_{j,l}$.

\item \textbf{For ${\iota}=\codim(V)$ to $ \min(n,r)$:}
\begin{itemize}
\item $J_{\iota}=R[T].\mathrm{ideal}(P_1,\dots,P_{\iota})$.
\item $K_{\iota}=J_{\iota}+R[T].\mathrm{ideal}\left(1-T\cdot \sum_{j=0}^r \vartheta_j f_j \right)$; $\vartheta_j$ a general scalar in ${k}$.
\item Let  $\Omega^{({\iota})}= \left\lbrace \omega_1^{({\iota})}, \dots, \omega_{\nu}^{({\iota})} \right\rbrace $ denote the monomial basis of $A^{\iota}(X_{\Sigma})$. 
\item \textbf{For $\omega$ in $\Omega^{({\iota})}$:} \begin{itemize}
\item Let $ a^{(\iota)}=\frac{\zeta}{\omega}$ and factor $a^{(\iota)}=b_1^{j_1}\cdots b_q^{j_q}$ for $b_j\in A^1(X_{\Sigma})$.
\item $ L= \sum_{w=0}^{j_1} R[T].\mathrm{ideal}(R.\mathrm{random}(b_1) )+ \cdots + \sum_{w=0}^{j_q} R[T].\mathrm{ideal}(R.\mathrm{random}(b_q) )$
\item $
L_A= R[T].\mathrm{ideal}\left(   \sum_{\rho_j \in \mathbf{p_1} }\lambda_{j}^{(1)}x_{\rho_{j}} -1, \dots,  \sum_{\rho_j \in \mathbf{p_{\nu}}}\lambda_{j}^{(\nu)}x_{\rho_{j}}  -1  \right)$, for general $\lambda_{j}^{(\ell)}$.
\item Set $ \gamma_{\omega}=\dim_k \left(R[T]/\left( K_{\iota}+ L+L_A\right) \right)$.
\end{itemize}
\item Set $\displaystyle [Y_{\iota}]=\sum_{\omega \in \Omega^{({\iota})}} \gamma_{\omega} \cdot \omega \in A^*(X_{\Sigma}).$
\end{itemize}
\item\textbf{Return } $\displaystyle {s(V,X_{\Sigma} )= 1- \frac{1}{1+\alpha}\cdot \sum_{\iota \geq 0} \frac{[Y_{\iota}]}{(1+\alpha)^{\iota} } \in  A^*(X_{\Sigma})}$.
%\item \textbf{Return $s(V,X_{\Sigma})$}.
\end{itemize}
 \label{algorithm:SegreAlgMultiProj}
\end{algorithm} \vspace{-5mm}
\normalsize

In Algorithm \ref{algorithm:csm_InExMultiProj} we give an algorithm which uses Proposition \ref{propn:AluffiGeneralCSMSegre_Relation}, the inclusion/exclusion property of $c_{SM}$ classes, and Algorithm \ref{algorithm:SegreAlgMultiProj} to compute $c_{SM}(V)$ for $V$ a subscheme of $X_{\Sigma}$. 
\begin{algorithm} 
 \textbf{Input:} A smooth projective toric variety $X_{\Sigma}$ and an ideal $I=(f_0,\dots, f_r)$ defining a subscheme $V=V(I)$ of $X_{\Sigma}$ as above.\newline
 \textbf{Output:} $c_{SM}(V)$ in $A^*(X_{\Sigma})$ and/or $\chi(V)$. \vspace{-4mm}
\begin{itemize}
\item Let $\mathrm{csm}=0 \in A$.
\item Let $\mathcal{S}$ be the set of all distinct non-empty subsets of $\left\lbrace f_0,\dots, f_r \right\rbrace $.
\item \textbf{For $\left\lbrace f_{i_1},\dots,f_{i_{\mathfrak{s}}} \right\rbrace\in \mathcal{S}$} \begin{itemize}
\item Let $g=f_{i_1}\cdots f_{i_{\mathfrak{s}}} $ in $R$.
\item Let $J$ be the Jacobian ideal of $g$, that is the ideal defining the singularity subscheme $Y=V(J)$ of $W=V(g)$. $J$ is generated by the partial derivatives of $g$ (see \S\ref{subsection:csmHyper}). 
\item Let $[W]=[V(g)]$.
\item Calculate $s(W,X_{\Sigma})=s(V(g),X_{\Sigma})=\frac{[W]}{1+[W]} \in A^*(X_{\Sigma}).$
\item Compute $s(Y,X_{\Sigma})=s(V(J),X_{\Sigma})\in A^*(X_{\Sigma})$ using Algorithm \ref{algorithm:SegreAlgMultiProj}.
\item $c(T_{X_{\Sigma}})=(1+x_1)\cdots (1+x_m)\in A^*(X_{\Sigma}) $.\small
\item $\displaystyle{\mathrm{csm}=\mathrm{csm}+(-1)^{\mathfrak{s}+1} c(T_{X_{\Sigma}})\cdot \left(s(W,X_{\Sigma}) +\sum_{j=0}^n \sum_{l=0}^{n-j} {n-j \choose l}[W]^l \cdot (-1)^{n-j}s_{j+l}(Y, X_{\Sigma})  \right).}$\normalsize
\end{itemize}
\item $c_{SM}(V)=\mathrm{csm}$, set $\chi(V)$ equal to the coefficient of $\zeta$ in $c_{SM}(V)$.
\item \textbf{Return $c_{SM}(V)$ and/or $\chi(V)$}
\end{itemize}
\label{algorithm:csm_InExMultiProj}
\end{algorithm}\vspace{-5mm}

Algorithm \ref{algorithm:csm_Complete_Int_multi_proj} directly computes the $c_{SM}$ class of a complete intersection subscheme which can be written as the intersection of an embedded smooth variety and a hypersurface. Avoiding inclusion/exclusion is often more efficient, since less Segre class computations are performed. Let $V$ be a subscheme of $X_{\Sigma}$ as above. In the algorithm below we use Proposition \ref{propn:SingXJacobianMinors} to write the equations defining the singularity subscheme of $V$. 

\begin{algorithm} 
 \textbf{Input:} A smooth projective toric variety $X_{\Sigma}$ and an ideal $I=(f_0,\dots, f_r)$ as above where $V=V(I)$ is a complete intersection subscheme and $V(f_0)\cap \cdots \cap  V(f_{r-1})$ is smooth. \newline 
 \textbf{Output:} $c_{SM}(V)$ in $A^*(X_{\Sigma})$ and/or $\chi(V)$. \vspace{-4mm}
\begin{itemize}
\item Let $K$ be the ideal defined by the $(r+1)\times (r+1)$ minors of the Jacobian matrix of $I$.
\item Let $J=(K+I):B^{\infty}$  so that $Y=V(J)$ is the singularity subscheme of $V$.
\item Compute $s(Y,X_{\Sigma})\in A^*(X_{\Sigma})$ using Algorithm \ref{algorithm:SegreAlgMultiProj}.
\item Set $c_i$ equal to the dimension $i$ part of $(1+[V(f_0)])\cdots(1+[V(f_r)])\in A^*(X_{\Sigma})$.
\item $s(V,X_{\Sigma})=\frac{(1+x_1)\cdots (1+x_m) }{(1+[V(f_0)])\cdots(1+[V(f_r)])}$.
\item \scriptsize $ \displaystyle{
c_{SM}(V)= s(V,X_{\Sigma}) \cdot \left([V(f_0)]\cdots [V(f_r)]+ \left( (-1)^r\sum_{j=0}^r\sum_{i=0}^j {r-i \choose j-i} (-1)^i [V(f_r)]^{j-i}c_i \right) \cdot \left( \sum_{i=0}^n \frac{(-1)^is^{(i)}(Y,X_{\Sigma})}{(1+[V(f_r)])^i}\right) \right) } $ \normalsize
\item Set $\chi(V)$ equal to the coefficient of $\zeta$ in $c_{SM}(V)$.
\item \textbf{Return $c_{SM}(V)$ and/or $\chi(V)$}
\end{itemize}
\label{algorithm:csm_Complete_Int_multi_proj}
\end{algorithm}\vspace{-5mm}
Note that, via Proposition \ref{propn:Inclusion_exclusion_singular_part_only}, Algorithm \ref{algorithm:csm_Complete_Int_multi_proj} can be extended to work on any subscheme of a smooth complete intersection in $X_{\Sigma}$. This allows us to reduced the number of inclusion/exclusion steps and to increase the efficiency of $c_{SM}$ class computations in some cases. 
\section{Performance} \label{subsection:PerformenceMultiProj}
As above $X_{\Sigma}$ will be denote a smooth complete toric variety. In \S\ref{subsection:runtimeTests} we discuss the real life performance of our algorithms to compute Segre classes, $c_{SM}$ classes and the Euler characteristic of subschemes of $X_{\Sigma}$. Runtime bounds for Algorithm \ref{algorithm:SegreAlgMultiProj} and Algorithm \ref{algorithm:csm_InExMultiProj} are given in \S\ref{subsection:RunTimeBoundMultiProj}. 

\subsection{Runtime Tests}\label{subsection:runtimeTests}

In Table \ref{table:SegreResultsMultiProj} we compare the runtimes of Algorithm \ref{algorithm:SegreAlgMultiProj} to the runtimes of the algorithm of Moe and Qviller \cite{moe2013segre}. The implementation of Moe and Qviller linked to in \cite{moe2013segre} is used for testing. 

All test computations were performed in Macaulay2 \cite{M2} (version 1.8) over $\mathbb{GF}(32749)$ on a computer with a 2.9GHz Intel Core i7-3520M CPU and 8 GB of RAM. A list of the ideals defining the examples presented in this subsection may be found at \url{https://github.com/Martin-Helmer/char-class-calc-toric/blob/master/Examples.m2}, note that all examples are singular and hence could not be considered general in any particular sense. It seems in practice that the runtime of Algorithm \ref{algorithm:SegreAlgMultiProj} is primarily influenced by the degree and number of generators of the ideal, the number of variables in the total coordinate ring, the codimension, and the sparsity of the polynomials defining the ideal (i.e.\ the number of monomials in each polynomial); this is consistent with the result of Proposition \ref{propn:run_time_b_Segre_Multi_proj} below. 

As can be seen in Table \ref{table:SegreResultsMultiProj} Algorithm \ref{algorithm:SegreAlgMultiProj} is consistently and often quite considerably faster than the algorithm of Moe and Qviller \cite{moe2013segre}. The main computational cost of the algorithm of Moe and Qviller \cite{moe2013segre} is the computation of the saturations to find the residual sets. This can, in practice, be a quite computationally expensive procedure. The main computational cost of Algorithm \ref{algorithm:SegreAlgMultiProj} is the computation of the projective degrees via the result of Theorem \ref{theorem:sat_1mTMultiProj}. The computational performance advantage of Algorithm \ref{algorithm:SegreAlgMultiProj} seems to be primarily due to the extremely explicit nature of the result of Theorem \ref{theorem:sat_1mTMultiProj}. This result, in particular, allows us to take advantage of a variety of fast algorithms to solve zero dimensional systems and gives more control over how computations are performed in our implementation. Even in the case where $X_{\Sigma}=\pp^n$, where the algorithm of \cite{moe2013segre} reduces to that of Eklund, Jost and Peterson \cite{Jost}, Algorithm \ref{algorithm:SegreAlgMultiProj} still offers markedly improved performance in comparison with either the implementation of \cite{Jost} or the implementation of \cite{moe2013segre}, see \cite{Helmer2015} for a more on the $X_{\Sigma}=\pp^n$ case.

As discussed in \S\ref{section:intro} and \S\ref{subsection:MoeQviller} the algorithm of Harris \cite{harris2015computing} could be applied to compute the examples in Table \ref{table:SegreResultsMultiProj}. Using the implementation of Harris \cite{harris2015computing} (linked to in \cite{harris2015computing}) we attempted to compute all the examples in Table \ref{table:SegreResultsMultiProj} which are subschemes of a product of projective spaces via the Segre embedding. None of the examples finished computing in 600 seconds. For reference, the runtime of the algorithm of \cite{harris2015computing} to compute the Segre class $s(V(f),\pp^2\times \pp^2)$ for $f$ a general form of degree $(3,3)$ was approximately $135$ seconds, using Algorithm \ref{algorithm:SegreAlgMultiProj} this computation took less than $0.1$ seconds. 

\begin{table}[h!]
\centering
\resizebox{.7\linewidth}{!}{
\begin{tabular}{@{} l *5c @{}}
\toprule 
 \multicolumn{1}{c}{{\color{Ftitle} \textbf{Input}}}    & {\color{Ftitle} toricSegreClass (\cite{moe2013segre}) }  &   {\color{Ftitle} Algorithm \ref{algorithm:SegreAlgMultiProj} }  \\ 
 \midrule 
  Codim.\ $3$ in $\pp^2 \times \pp^3$  & - & {\color{line} 33.6s}\\ 
  Codim.\ $2$ in $\pp^1 \times \pp^1 \times \pp^1$  & 32.0s & {\color{line} 0.1s}\\ 
  Codim.\ $2$ in $\pp^3 \times \pp^2$  & 2.0s & {\color{line} 0.2s}\\
  Hypersurface in $\pp^5 \times \pp^3$  & 147.4s & {\color{line} 0.5s}\\ 
  Codim.\ $2$ in $\pp^2 \times \pp^3 \times \pp^1$  & 66.8s & {\color{line} 0.5s}\\ 
  Codim.\ $2$ in $\pp^2 \times \pp^2 \times \pp^2$  & 15.7s & {\color{line} 0.5s}\\
  Codim.\ $2$ in $\pp^4 \times\pp^3 \times \pp^3$  & - & {\color{line} 7.4s}\\ 
  Codim.\ $2$ in $\pp^4 \times \pp^3 \times \pp^5$  & - & {\color{line} 22.4s}\\ 
  Codim.\ $4$ in $\pp^2 \times \pp^2 \times \pp^1$  & - & {\color{line} 2.7s}\\
 Codim.\ $1$ with 2 gens. in Dim. 3 $X_{\Sigma_1}$  & 7.6s & {\color{line} 0.1s}\\
 Codim.\ $1$ with 3 gens. in Dim. 3 $X_{\Sigma_1}$  & - & {\color{line} 1.0s}\\
  Example \ref{example:SegreSmoothFano}  & 0.6s & {\color{line} 0.1s}\\
\bottomrule
 \end{tabular}}
 \caption[Comparision of algorithms to compute the Segre class of a subscheme of a smooth complete toric variety]{Runtimes of different algorithms for computing the Segre class of a subscheme of a some $X_{\Sigma}$.  The - denotes computations that were stopped after 600 seconds. \label{table:SegreResultsMultiProj}}
 \end{table}
 
In Table \ref{table:CSMResultsMultiProj} we give the running times to compute the $c_{SM}$ class and/or Euler characteristic using Algorithm \ref{algorithm:csm_InExMultiProj}. There are no other implemented algorithms to compute the $c_{SM}$ class and Euler characteristic in this setting, so we are not able to offer direct comparisons. Any algorithm to compute Segre classes $s(W,X_{\Sigma})$ for $W$ a subscheme of $X_{\Sigma}$ could be adapted to compute $c_{SM}$ classes of subschemes of $X_{\Sigma}$ using inclusion/exclusion (Proposition \ref{propn:csm_higher_codim}). The speed of all such computations would depend on the speed of the required Segre class computations, subsequently it seems likely that Algorithm \ref{algorithm:csm_InExMultiProj} would offer a performance advantage.    

\begin{table}[h!]
\centering
\resizebox{.7\linewidth}{!}{
\begin{tabular}{@{} l *4c @{}}
\toprule 
 \multicolumn{1}{c}{{\color{Ftitle} \textbf{Input}}}  & {\color{Ftitle} Algorithm \ref{algorithm:csm_InExMultiProj} }  \\ 
 \midrule 
  Codim.\ $2$ in $\pp^2 \times \pp^2$  & {\color{line} 0.3s}\\
  Codim.\ $2$ in $\pp^6 \times \pp^2$ with deg.\ $(3,0), (0,2)$ eqs.  & {\color{line} 3.9s}\\
  Codim.\ $2$ in $\pp^5 \times \pp^3$ with deg.\ $(2,1)$ and $(1,1)$ eqs.  & {\color{line} 12.4s}\\
  Codim.\ $2$ in $\pp^2 \times \pp^2 \times \pp^3$ with deg.\ $(2,1,0)$ and $(0,1,2)$ eqs.  & {\color{line} 4.8s} \\
  Codim.\ $3$ in $\pp^2 \times \pp^2 \times \pp^3$ with deg.\ $(2,1,0),(0,1,2),(1,2,0)$ eqs.  \quad \quad  & {\color{line} 52.4s} \\
  Codim.\ $1$ in with 2 gens. Dim. 3 $X_{\Sigma_1}$  & {\color{line} 0.3s}\\
  Codim.\ $1$ in with 3 gens. Dim. 3 $X_{\Sigma_1}$  & {\color{line} 2.0s}\\
\bottomrule
 \end{tabular}}
 \caption{Running time of Algorithm \ref{algorithm:csm_InExMultiProj} to find $c_{SM}(V)$ and $\chi(V)$ for subschemes $V$ of $X_{\Sigma}$.  \label{table:CSMResultsMultiProj}}
\end{table}

In Table \ref{table:CSMCompleteIntResultsMultiProj} we compare the running times of Algorithm  \ref{algorithm:csm_Complete_Int_multi_proj}, our direct algorithm to compute the $c_{SM}$ class and Euler characteristic using Theorem \ref{theorem:csm_complete_int_multi_proj}, to the running time of Algorithm \ref{algorithm:csm_InExMultiProj}, our algorithm using inclusion/exclusion in $X_{\Sigma}$. The runtime of Algorithm \ref{algorithm:csm_Complete_Int_multi_proj} includes the time required to compute the singularity subscheme, which is often a considerable percentage of the overall runtime of the algorithm. As such, a more efficient way to compute the singularity subscheme than that presented in Algorithm \ref{algorithm:csm_Complete_Int_multi_proj} could result in a more marked performance gain versus inclusion/exclusion. 
\begin{center}

\begin{table}[h!]
\centering
\resizebox{.6\linewidth}{1.25cm}{
\begin{tabular}{@{} l *5c @{}}
\toprule 
 \multicolumn{1}{c}{{\color{Ftitle} \textbf{Input}}} & {\color{Ftitle} Algorithm \ref{algorithm:csm_InExMultiProj} }  & {\color{Ftitle} Algorithm \ref{algorithm:csm_Complete_Int_multi_proj} }  \\ 
 \midrule 
  Codimension $3$ in $\pp^2 \times \pp^2$  &1.6s & {\color{line} 0.3s}\\
  Codimension $2$ in $\pp^2 \times \pp^3$  &1.9s & {\color{line} 1.0s}\\
  Codimension $3$ in $\pp^2 \times \pp^2 \times \pp^2$  &5.7s & {\color{line} 0.2s}\\
  Codimension $2$ in $\pp^3 \times \pp^2 \times \pp^2$  &3.1s & {\color{line} 0.9s}\\
  
\bottomrule
 \end{tabular}}
 \caption[The $c_{SM}$ class and Euler characteristic of certain compmplete intersections]{Times to compute $c_{SM}(V)$ and $\chi(V)$ for subschemes $V$ of $X_{\Sigma}$ satisfying the assumptions of Theorem \ref{theorem:csm_complete_int_multi_proj} using Algorithm \ref{algorithm:csm_InExMultiProj} and Algorithm \ref{algorithm:csm_Complete_Int_multi_proj}. \label{table:CSMCompleteIntResultsMultiProj}}
 \end{table}
 
 \end{center}
 
\subsection{Running Time Bounds}\label{subsection:RunTimeBoundMultiProj}
Here we consider running time bounds for Algorithms \ref{algorithm:SegreAlgMultiProj} and \ref{algorithm:csm_InExMultiProj}. In the case where $X_{\Sigma}=\pp^n$ complexity results of a different flavour for the problem of computing projective degrees of a rational map as in \eqref{eq:rational_map_of_ideal} and the problem of computing the Euler characteristic of a possibly singular subvariety of $\pp^n$ can be found in B{\"u}rgisser, Cucker, and Lotz \cite{burgisser2005counting}. Roughly speaking \cite[Theorem 1.2]{burgisser2005counting} this tells us that these problems are difficult among the set of problems involving counting points in zero dimensional algebraic sets constructed via generic choices. This result of \cite{burgisser2005counting} is consistent with Proposition \ref{propn:run_time_b_Segre_Multi_proj} and Corollary \ref{corr:run_time_b_csm_multi_proj}.

Throughout this subsection let $\delta(D,N)$ be the number of arithmetic operations required to find the number of points in a zero dimensional affine variety $W$ defined by a polynomial system containing $N$ degree $D$ polynomials in $N$ variables.

Using the algorithm of Lecerf \cite{Lecerf03} we have that the number of arithmetic operations to solve such a system is polynomial in $\oo(N^5D^{3N})$. There also exist bounds of similar order on some Gr\"{o}bner basis algorithms for solving zero dimensional systems. See, for example, Hashemi and Lazard \cite{HashemiLazardGbBounds} or Faug\`{e}re, Gianni, Lazard, and Mora \cite{faugere1993efficient}. Note that while this bound is essentially polynomial in the B\'ezout bound $D^N$, which is the upper bound on the actual number of solutions, $S$, the complexity is still exponential relative to the number of digits, $\log(S)$, in a computer representation of the number $S$. In practice factors like the sparsity of the terms in the defining polynomials will also play a role in the cost $\delta(D,N)$.

In the results below (as in \S\ref{section:SegreCSMMultiProj}) we let $X_{\Sigma}$ be a smooth projective toric variety satisfying the affine codimension condition of \S\ref{subsection:countPoints}. Let $\dim(X_{\Sigma})=n$, let $R$ be the total coordinate ring, and let $N$ be the number of generating rays in $\Sigma(1)$. Take $I=(f_0,\dots,f_r)$ to be a homogeneous ideal, with respect to the grading, in $R$ and let $V=V(I)$ be the subscheme defined by $I$. Further assume, without loss of generality, that $\deg(f_i)=\alpha \in A^1(X_{\Sigma})$ for all $i=0,\dots,r$ and let $D$ be the sum of the exponents of the monomial in $\alpha$ having the largest total degree.

\begin{propn}
Let $X_{\Sigma}$, $I$, $N$, $D$ and $\delta$ be as above and suppose that $\alpha$ is nef. We have that the number of arithmetic operations required to compute the Segre class $s(V,X_{\Sigma} )$ using Algorithm \ref{algorithm:SegreAlgMultiProj} is of order$$
\oo \left(\delta(D+1,N+1) \cdot \sum_{\iota=\codim(V)}^{\min(n,r)} \left( \sum_{i=\iota}^n (-1)^{i-\iota} {i \choose \iota} \left| \Sigma(\iota)\right| \right)  \right),
$$ where $\left| \Sigma(\iota)\right|$ denotes the number of cones in $\Sigma$ of dimension $\iota$. \label{propn:run_time_b_Segre_Multi_proj}
\end{propn}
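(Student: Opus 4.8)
The plan is to identify the dominant arithmetic cost of Algorithm \ref{algorithm:SegreAlgMultiProj} as the repeated computation of the vector-space dimensions $\dim_k\left(R[T]/(K_\iota + L + L_A)\right)$ in its innermost loop, and then to bound separately the cost of a single such computation and the number of times it is performed. Each such dimension is exactly a projective degree $\gamma_i^{(\iota)}$ as in Theorem \ref{theorem:sat_1mTMultiProj}, so the first step is to count the generators and the variables of the associated zero-dimensional ideal. The ambient ring is $R[T]=k[x_1,\dots,x_N,T]$, which has $N+1$ variables. The generators fall into four groups: the $\iota$ forms $P_1,\dots,P_\iota$; the single polynomial $S=1-T\sum_l \vartheta_l f_l$; the complementary ideal $L_{a_i^{(\iota)}}$; and the dehomogenizing ideal $L_A$. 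First I would observe that the complementary cycle $a_i^{(\iota)}=\zeta/\omega_i^{(\iota)}$ has codimension $n-\iota$ (since $\zeta \in A^n$ and $\omega_i^{(\iota)} \in A^\iota$), so $L_{a_i^{(\iota)}}$ is generated by $n-\iota$ forms; and, by the affine codimension condition together with Theorem \ref{theorem:countingPoints}, $L_A$ is generated by exactly $N-n$ linear forms, one per primitive collection. Summing gives $\iota + 1 + (n-\iota) + (N-n) = N+1$ generators in $N+1$ variables, so each dimension computation is a zero-dimensional solve of precisely the type measured by $\delta(\cdot,N+1)$.

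Next I would pin down the governing degree. The highest-degree generator is $S$: since each $f_l$ has total degree $D$ (by the definition of $D$ from the leading monomial of $\alpha$) and $S$ multiplies this by $T$, the polynomial $S$ has total degree $D+1$, while every other generator has degree at most $D$. Using monotonicity of $\delta$ in its first argument, I would bound the cost of one dimension computation by regarding the system as $N+1$ polynomials of degree at most $D+1$ in $N+1$ variables, yielding $\oo(\delta(D+1,N+1))$ per call.

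The remaining step is to count the calls. The outer loop runs exactly over $\codim(V) \le \iota \le \min(n,r)$, because outside this range the classes $[Y_\iota]$ are given in closed form by Theorem \ref{theorem:sat_1mTMultiProj} ($[Y_\iota]=\alpha^\iota$ or $0$) and cost nothing to leading order. For each fixed $\iota$ the inner loop runs over the monomial basis $\Omega^{(\iota)}$ of $A^\iota(X_{\Sigma})$, so the number of dimension computations for that $\iota$ is $|\Omega^{(\iota)}| = \dim_k A^\iota(X_{\Sigma})$, and the total is $\sum_{\iota=\codim(V)}^{\min(n,r)} \dim_k A^\iota(X_{\Sigma})$. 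Here I would invoke the standard combinatorial formula for the ranks of the Chow groups of a smooth complete toric variety, which expresses $\dim_k A^\iota(X_{\Sigma})$ as the alternating sum $\sum_{i=\iota}^n (-1)^{i-\iota}\binom{i}{\iota}\left|\Sigma(\cdot)\right|$ of cone numbers displayed in the statement. Multiplying the per-call bound by this count yields the claimed estimate. Finally I would remark that all auxiliary operations --- forming the random combinations $P_j$, assembling $K_\iota$, $L$, $L_A$, and reconstructing $s(V,X_{\Sigma})$ from the $[Y_\iota]$ via the truncated power series of $1/(1+\alpha)^{\iota+1}$ inside the finite-dimensional ring $A^*(X_{\Sigma})$ --- are polynomial in $N$ and in $\dim_k A^*(X_{\Sigma})$, hence dominated by the zero-dimensional solves and invisible at leading order.

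The step I expect to be the main obstacle is the exact bookkeeping of the generator count: confirming that $L_A$ contributes precisely $N-n$ generators (this is where the affine codimension condition is essential) and that $L_{a_i^{(\iota)}}$ contributes precisely $n-\iota$, so that the total is exactly $N+1$ and the computation genuinely falls under $\delta(\cdot,N+1)$ rather than a larger or smaller system. A secondary subtlety is justifying that $D+1$ is really the governing degree: the generators $\ell(b_j)$ of the complementary ideal represent the fixed nef basis classes $b_j$ and so have degree bounded independently of $\alpha$ (typically linear in the standard bases), whence $S$ dominates, and any lower-degree generators are absorbed by monotonicity of $\delta$.
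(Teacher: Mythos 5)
Your proposal is correct and follows essentially the same route as the paper's (much terser) proof: the cost is $\delta(D+1,N+1)$ per projective-degree computation, multiplied by the number of such computations, namely $\sum_{\iota=\codim(V)}^{\min(n,r)}\mathrm{rank}\,A^{\iota}(X_{\Sigma})$ with the ranks given by Danilov's combinatorial formula (the paper cites Danilov, Theorem 10.8, where you invoke the ``standard formula''). Your explicit bookkeeping --- that the zero-dimensional system consists of exactly $\iota+1+(n-\iota)+(N-n)=N+1$ polynomials in the $N+1$ variables of $R[T]$, with $S$ of degree $D+1$ dominating --- is carried out in more detail than the paper's one-line assertions, but it is the same argument.
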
\begin{proof} 
By Danilov \cite[Theorem 10.8]{danilov1978geometry} a basis of the Chow group $A^{\iota}(X_{\Sigma})$ will contain $$
\mathrm{rank} \left(A^{\iota}(X_{\Sigma}) \right)=\sum_{i=\iota}^n (-1)^{i-\iota} {i \choose \iota} \left| \Sigma(\iota)\right|, 
$$elements. For each element we must solve one linear system in an affine space of dimension $N+1$. The largest total degree of a polynomial appearing in the systems we consider will be one plus the sum of the exponents of the monomial in $\alpha$ having the largest total degree.\qedhere\end{proof}
Examining Algorithm \ref{algorithm:csm_InExMultiProj} we note that one Segre class, namely that of the appropriate singularity subscheme, must be calculated for each subset of the generators of $I$ when finding $c_{SM}(V(I))$.

\begin{corr}
Let $X_{\Sigma}$, $I$, $N$, $D$ and $\delta$ be as above and suppose that $\alpha$ is nef. Let $\kappa$ be the minimum codimension of the singularity subscheme of all hypersurfaces of all products of the generators of $I$. The number of arithmetic operations required to compute $c_{SM}(V)$ using Algorithm \ref{algorithm:csm_InExMultiProj} has order $$\oo \left(2^{r+1} \cdot \delta((r+1)\cdot D+1,N+1) \cdot \sum_{\iota=\kappa}^{n} \left( \sum_{i=\iota}^n (-1)^{i-\iota} {i \choose \iota} \left| \Sigma(\iota)\right| \right)   \right).$$ \label{corr:run_time_b_csm_multi_proj}
\end{corr}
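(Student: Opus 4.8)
The plan is to treat Corollary \ref{corr:run_time_b_csm_multi_proj} as a bookkeeping consequence of Proposition \ref{propn:run_time_b_Segre_Multi_proj}: I would count the Segre-class computations performed by Algorithm \ref{algorithm:csm_InExMultiProj}, bound the cost of each one uniformly by a single worst-case instance of the Segre bound, and multiply. The first step is to isolate the dominant cost in the body of the outer loop, which ranges over the collection $\mathcal{S}$ of non-empty subsets of $\{f_0,\dots,f_r\}$, of which there are $2^{r+1}-1=\oo(2^{r+1})$. For each subset $\{f_{i_1},\dots,f_{i_{\mathfrak{s}}}\}$ the only step whose cost grows with the ambient data is the single call to Algorithm \ref{algorithm:SegreAlgMultiProj} computing $s(Y,X_{\Sigma})$, where $Y=V(J)$ is the singularity subscheme of the hypersurface $W=V(g)$ and $g=f_{i_1}\cdots f_{i_{\mathfrak{s}}}$. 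I would then argue that the remaining per-iteration work — forming $g$ and its Jacobian ideal $J$, computing the hypersurface Segre class $[W]/(1+[W])$, the Chern class $c(T_{X_{\Sigma}})=(1+x_1)\cdots(1+x_m)$, and the final truncated products in $A^*(X_{\Sigma})$ — consists of polynomial manipulations of bounded degree whose cost is absorbed into the factor $\delta$, so that it does not affect the order of the bound.

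Next I would apply Proposition \ref{propn:run_time_b_Segre_Multi_proj} to each Segre computation, replacing its parameters by the worst case taken uniformly over $S\in\mathcal{S}$. The singularity ideal $J$ is generated by the partial derivatives of $g$; since $g$ has degree class $\mathfrak{s}\,\alpha$ with $\mathfrak{s}\le r+1$, the largest-total-degree monomial of $g$ has exponent sum at most $(r+1)D$, and this dominates the degree of every generator of $J$. Letting this uniform bound $(r+1)D$ play the role of the quantity $D$ in Proposition \ref{propn:run_time_b_Segre_Multi_proj}, the Rabinowitsch-type equation $S$ contributes the usual single extra degree, so each Segre computation costs at most $\delta((r+1)D+1,\,N+1)$ times a sum of Chow-group ranks. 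For that sum, the inner loop of Algorithm \ref{algorithm:SegreAlgMultiProj} runs over $\iota$ from $\codim(Y)$ up to $\min(n,N-1)$ (the singularity ideal having $N$ generators, one partial derivative per variable); bounding the lower limit below by $\kappa$, the minimum codimension of any singularity subscheme arising from a product of generators of $I$, and the upper limit above by $n$, and invoking Danilov's rank formula $\mathrm{rank}(A^{\iota}(X_{\Sigma}))=\sum_{i=\iota}^n(-1)^{i-\iota}{i \choose \iota}|\Sigma(\iota)|$ exactly as in the proof of Proposition \ref{propn:run_time_b_Segre_Multi_proj}, produces the factor $\sum_{\iota=\kappa}^{n}\big(\sum_{i=\iota}^n(-1)^{i-\iota}{i \choose \iota}|\Sigma(\iota)|\big)$ as a single upper bound valid for every subset.

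Multiplying the $\oo(2^{r+1})$ iterations by this uniform per-iteration bound then yields the stated order. The step requiring the most care — rather than any genuine difficulty — is the uniform degree and codimension bookkeeping: one must check that the full product of all $r+1$ generators is the worst case for the degree, that bounding the partial derivatives of $g$ crudely by $\deg(g)\le(r+1)D$ is exactly what produces the clean factor $(r+1)D+1$ through the Rabinowitsch equation, and that $\kappa$ is a legitimate lower limit (finite, being a minimum over the finitely many products $g$) dominating the codimension range of every individual Segre computation. I would also explicitly confirm that the auxiliary class arithmetic is genuinely subordinate to $\delta$, so that no hidden cost escapes the claimed bound.
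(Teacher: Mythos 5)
Your proposal is correct and takes essentially the same approach as the paper, whose own proof is just two sentences: count the $2^{r+1}$ subsets of $\{f_0,\dots,f_r\}$, observe that the Jacobian ideal of $f_0\cdots f_r$ has generators of total degree at most $(r+1)\cdot D$, and invoke Proposition \ref{propn:run_time_b_Segre_Multi_proj}. Your extra bookkeeping (the full product as worst case, the Rabinowitsch $+1$, $\kappa$ as a uniform lower limit for the $\iota$-range, and absorbing the Chow-ring arithmetic into $\delta$) simply makes explicit what the paper leaves implicit.
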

\begin{proof}
There are $2^{r+1}$ subsets of $\left\lbrace f_0,\dots,f_r \right\rbrace$. The maximum total degree of elements in the Jacobian ideal of $ f_0\cdots f_r$ will be $(r+1)\cdot D$.\qedhere \end{proof}
\begin{small}

\noindent
{\bf Acknowledgements.} The author was supported by a Natural Sciences and Engineering Research Council of Canada (NSERC) postdoctoral fellowship during the preparation of this note. The author would like to thank the anonymous referees for their many helpful comments and suggestions and would also like to thank \'{E}ric Schost for helpful comments on earlier drafts of this note. 
\end{small}

\begin{appendices}
\section{The Singularity Subscheme in Cox Coordinates}
Let $X_{\Sigma}$ be a smooth complete toric variety with graded total coordinate ring (i.e.~Cox ring) $R$ and let $V$ be a subscheme of $X_{\Sigma}$. In this appendix we prove two results regarding the equations in $R$ defining $V_{\rm Sing}$, the singularity subscheme of $V$ in $X_{\Sigma}$. These results are fairly direct consequences of the structure of the graded total coordinate ring, the geometric quotient construction of $X_{\Sigma}$ and the toric ideal-variety correspondence, see Chapter 5 of Cox, Little and Schenck \cite{david2011toric}. It is likely that both propositions are well known, but we could not find a precise reference so we give short proofs here. We use these results in the algorithms presented in \S\ref{section:SegreCSMMultiProj}.
\begin{propn}
Let $X_{\Sigma}$ be a smooth complete toric variety with graded total coordinate ring $R=k[x_1,\dots, x_m]$. Let $f\in R$. Then we have that the polynomial $f$ is contained in the ideal in $R$ generated by the partial derivatives of $f$, that is we have that $
f \in \left( \frac{df}{dx_1},\dots,  \frac{df}{dx_m} \right).
$\label{propn:finPartials}
\end{propn}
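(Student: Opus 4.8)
The key observation is that in the setting where this proposition is used $f$ is homogeneous with respect to the $A^1(X_{\Sigma})$-grading on $R$ and defines a hypersurface, so $\beta := \deg(f) \in A^1(X_{\Sigma})$ is a \emph{nonzero} class. (This nonvanishing is essential: if $f$ were a nonzero constant the ideal of partials would be $(0)$ and the statement would fail. Since $X_{\Sigma}$ is complete we have $R_0 = k$, so a homogeneous $f$ is nonconstant precisely when $\beta \neq 0$.) The plan is to establish a generalized Euler relation adapted to the multigrading and then simply solve for $f$.

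First I would record the multigraded Euler identity. For any group homomorphism $\chi \in \Hom_{\ZZ}(A^1(X_{\Sigma}), \ZZ)$ introduce the derivation $D_{\chi} = \sum_{j=1}^m \chi(\deg x_j)\, x_j \frac{d}{dx_j}$ of $R$. On a monomial $x^a = \prod_j x_j^{a_j}$ one has $x_j \frac{d}{dx_j} x^a = a_j x^a$, hence $D_{\chi}(x^a) = \chi\!\left(\sum_j a_j \deg x_j\right) x^a = \chi(\deg x^a)\, x^a$. Applying this termwise to $f$, and using that every monomial of $f$ has the same degree $\beta$, gives $D_{\chi}(f) = \chi(\beta) f$; that is, $\chi(\beta) f = \sum_{j=1}^m \chi(\deg x_j)\, x_j \frac{df}{dx_j}$.

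Next I would choose $\chi$ so that $\chi(\beta) \neq 0$. Since $X_{\Sigma}$ is smooth and complete, $A^1(X_{\Sigma}) \cong \mathrm{Pic}(X_{\Sigma})$ is a free abelian group, so the nonzero class $\beta$ has infinite order; projecting onto a coordinate in which $\beta$ has a nonzero entry yields $\chi \in \Hom_{\ZZ}(A^1(X_{\Sigma}), \ZZ)$ with $\chi(\beta) \neq 0$. Dividing the displayed identity by $\chi(\beta)$ (legitimate as $\mathrm{char}\,k = 0$) produces $f = \frac{1}{\chi(\beta)} \sum_{j=1}^m \chi(\deg x_j)\, x_j \frac{df}{dx_j}$, exhibiting $f$ as an explicit $R$-linear combination of its partial derivatives and hence proving $f \in \left( \frac{df}{dx_1}, \dots, \frac{df}{dx_m} \right)$.

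The only genuine obstacle is conceptual rather than computational: one must verify that the class-group grading supplies an Euler operator that does not annihilate $\beta$, which reduces to the freeness of $A^1(X_{\Sigma})$ for smooth complete $X_{\Sigma}$ together with $\beta \neq 0$. I would note for context that $D_{\chi}$ is nothing but the infinitesimal generator of the $G = \Hom_{\ZZ}(A^1(X_{\Sigma}), \CC^*)$-action underlying the geometric quotient of Theorem \ref{theorem:GeoQuoCox}, and the relation $D_{\chi}(f) = \chi(\beta) f$ simply records that $f$ is a $G$-semi-invariant of weight $\beta$.
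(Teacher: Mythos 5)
Your proof is correct and is essentially the paper's own argument: the paper likewise derives the Euler relation by differentiating the semi-invariance identity $f(g\cdot x)=\lambda_1^{l_1}\cdots\lambda_m^{l_m}f(x)$ for the $G$-action with respect to the group parameters and then setting all $\lambda_j=1$, which is exactly your derivation $D_\chi$ applied to $f$. Your write-up is in fact slightly more careful than the paper's, since you verify that a character $\chi\in\Hom_{\ZZ}(A^1(X_{\Sigma}),\ZZ)$ with $\chi(\beta)\neq 0$ exists (via freeness of $A^1(X_{\Sigma})$ for smooth complete $X_{\Sigma}$) and flag the implicit hypothesis that $f$ is homogeneous and nonconstant, points the paper's proof passes over in silence.
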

\begin{proof}
Let $G={\rm Hom}_{\ZZ}({\rm Cl}(X_\Sigma), \CC^*)$ be the subgroup of the algebraic torus $(\CC^*)^m$ specified by Lemma 5.1.1 of Cox, Little and Schenck \cite{david2011toric} (see also \S\ref{subsection:homhgeneCoordRing} above). By \cite[Theorem 5.1.11, \S5.2]{david2011toric} we have that $R$ is a multi-graded ring and its elements are homogeneous in the sense that, if we fix $f(x)\in R$ and $g=(\lambda_1,\dots, \lambda_m)\in G$, we have:$$
f(g\cdot x)=f(\lambda_1\cdot x_1,\dots, \lambda_m\cdot x_m)=\lambda_1^{l_1}\cdots\lambda_m^{l_m}f(x),
$$where $l_j\in \ZZ$. Note that we may have $\lambda_{j_1}=\cdots=\lambda_{j_v}$ for some $j_1,\dots, j_v$ determined by $G$. Differentiating with respect to some $\lambda_{j_1}=\cdots=\lambda_{j_v}$ using the chain rule we obtain \footnotesize $$
x_{j_1}\frac{df}{x_{j_1}}(\lambda_1\cdot x_1,\dots, \lambda_m\cdot x_m)+\cdots + x_{j_v}\frac{df}{x_{j_1}}(\lambda_1\cdot x_1,\dots, \lambda_m\cdot x_m)=l_{j_1}\cdots l_{j_v} \lambda_1^{l_1}\cdots\lambda_{j_1}^{l_{j_1}-1}\cdots \lambda_{j_v}^{l_{j_v}-1} \cdots \lambda_m^{l_m}f(x).
$$\normalsize Setting $\lambda_1= \cdots= \lambda_m=1$ in the above shows $f$ is in the ideal defined by the partial derivatives. This proves the proposition. Note that in the case where $X_{\Sigma}=\pp^n$ we have $G=\left\lbrace (\lambda, \dots, \lambda)\in (\CC^*)^m \; | \; \lambda\in \CC^*\right\rbrace\cong \CC^*.$ This gives $\lambda=\lambda_1=\cdots = \lambda_m\in \CC^*$ with the $l_j$'s being the power to which $x_j$ appears and their sum being the degree of the homogeneous polynomial $f$. From this we get the classical Euler's homogeneous function formula (and its corollaries) for singly graded homogeneous polynomials. 
\end{proof}

\begin{propn}
Let $X_{\Sigma}$ be a smooth complete toric variety with total homogeneous coordinate ring $R=k[x_1,\dots,x_m]$ and with irrelevant ideal $B$ so that $X_{\Sigma}= (\CC^m-V(B))/G$. Let $V=V(f_1,\dots, f_s)$ be a subscheme of $X_{\Sigma}$ defined by the $B$-saturated ideal $I=(f_1,\dots, f_s)$ (i.e.~$I=I:B^{\infty}$) in $R$. Let $$
J(I)=\begin{pmatrix}
\frac{df_1}{dx_1} & \cdots & \frac{df_1}{dx_m}\\
\vdots & \ddots & \vdots\\
\frac{df_s}{dx_1} & \cdots & \frac{df_s}{dx_m}
\end{pmatrix}
$$ be the Jacobian matrix. Set $n=\dim(X_\Sigma)$ and $r=\dim(V)$. Define $K$ be the ideal in $R$ generated by all $(n-r)\times (n-r)$ minors of the Jacobian matrix $J(I)$. The singularity subscheme of $V$ is given by $V_{\rm Sing}=V((I+K):B^{\infty})$.\label{propn:SingXJacobianMinors}
\end{propn}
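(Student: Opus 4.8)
The plan is to reduce the statement to the classical Jacobian criterion for an affine variety by working through the geometric quotient presentation $X_{\Sigma} = (\CC^m - V(B))/G$ of Theorem \ref{theorem:GeoQuoCox}. Singularity is a local property, and the quotient map $\pi : \CC^m - V(B) \to X_{\Sigma}$ is the tool that transports it between $X_{\Sigma}$ and the ambient affine space. First I would record that, since $X_{\Sigma}$ is smooth and complete, $\mathrm{Cl}(X_{\Sigma})$ is free of rank $m-n$, so $G=\Hom_{\ZZ}(\mathrm{Cl}(X_{\Sigma}),\CC^*)\cong(\CC^*)^{m-n}$ is a torus and $\pi$ is a $G$-torsor; in particular $\pi$ is a smooth morphism, flat of relative dimension $\dim G = m-n$.

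Next I would set $\tilde{V}=\pi^{-1}(V)\subseteq \CC^m - V(B)$. Because $I$ is $B$-saturated, $\tilde{V}$ is exactly the restriction to $\CC^m - V(B)$ of the affine scheme $V(I)\subseteq \CC^m$, and since $\pi$ is smooth of relative dimension $m-n$ the scheme $\tilde{V}$ has pure codimension $\codim(V,X_{\Sigma})=n-r$ in $\CC^m$ at every point of $\CC^m - V(B)$. The central step is the transfer of regularity across the smooth morphism $\pi$: for a point $p\in V$, the scheme $V$ is smooth at $p$ if and only if $\tilde{V}$ is smooth along the fibre $\pi^{-1}(p)$. Consequently $p\in V_{\rm Sing}$ exactly when $\pi^{-1}(p)$ is contained in the singular locus of $\tilde{V}$, so that $V_{\rm Sing}$ is the image under $\pi$ of $\tilde{V}_{\rm Sing}$.

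I would then apply the classical Jacobian criterion to the affine scheme $V(I)\subseteq \CC^m$: since $\tilde{V}$ is pure of codimension $n-r$ on $\CC^m - V(B)$, its singular locus there is cut out by the ideal $I+K$, where $K$ is generated by the $(n-r)\times(n-r)$ minors of the Jacobian matrix $J(I)$. (Equidimensionality of $V$ is precisely what licenses the use of the single number $n-r$ rather than varying codimensions on different components.) Finally I would translate this affine description back to $X_{\Sigma}$ via the toric ideal--variety correspondence of Chapter 5 of \cite{david2011toric}: a subscheme of $X_{\Sigma}$ corresponds to a $B$-saturated ideal, and the ideal of $\pi(\tilde{V}_{\rm Sing})$ in $X_{\Sigma}$ is obtained from $I+K$ by discarding the extraneous components supported on the irrelevant locus $V(B)$, i.e.\ by saturating. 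This yields $V_{\rm Sing}=V((I+K):B^{\infty})$.

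The main obstacle I anticipate is making the regularity-transfer step and the codimension bookkeeping fully rigorous. One must genuinely use that $\pi$ is smooth of positive relative dimension $m-n$ (not a finite quotient), that the $B$-saturation hypothesis on $I$ is exactly what makes $\tilde{V}$ the honest preimage of correct pure codimension $n-r$, and that the Jacobian criterion is invoked on an equidimensional scheme. Once $\pi$ is known to be a smooth $G$-torsor, the equality $\dim\tilde{V}=\dim V+(m-n)$ and the preservation of regularity along the fibres are standard properties of smooth morphisms, so the only remaining care is to confirm that the saturation precisely accounts for the difference between the singular locus of $V(I)$ on all of $\CC^m$ and its restriction to $\CC^m - V(B)$.
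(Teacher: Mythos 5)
Your proposal is correct and follows essentially the same route as the paper's proof: pass to the affine scheme $V(I)\subset\CC^m$ (the paper's ``affine cone'' $\tilde{V}$), apply the classical Jacobian criterion with $(n-r)\times(n-r)$ minors, descend through the geometric quotient $\pi:\CC^m-V(B)\to X_{\Sigma}$, and invoke the toric ideal--variety correspondence so that saturation by $B$ discards the components supported on $V(B)$. Your treatment is in fact slightly more careful than the paper's at the regularity-transfer step, where you justify it via $\pi$ being a smooth $G$-torsor of relative dimension $m-n$, while the paper simply asserts the correspondence of singular points.
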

\begin{proof}
Since the ideal $I$ is $B$-saturated and $X_{\Sigma}$ is smooth then, by the toric ideal-variety correspondence (see \cite[Proposition 5.2.7]{david2011toric}), $I$ is the unique ideal associated to $V$. Consider the $r+(m-n)$ dimensional subscheme $\tilde{V}$ of $\CC^m$ defined by the ideal $I$ in the affine space $\CC^m$; that is let $\tilde{V}$ be the affine cone over $V$. Since $\tilde{V}$ is defined by the ideal $I$, a point $p\in \tilde{V}$ is a singular point if and only if the Jacobian matrix $J(I)$ has less than maximal rank at $p$, equivalently $p$ is a singular point if and only if all $(m-(r+(m-n)))\times (m-(r+(m-n)))=(n-r)\times (n-r)$ minors of $J(I)$ vanish at $p$. Hence the singularity subscheme of $\tilde{V}$ in $\CC^m$ is defined by the ideal $I+K$ where $K$ is generated by the $(n-r)\times (n-r)$ minors of $J(I)$. By the geometric quotient construction of $X_{\Sigma}$ (see Theorem \ref{theorem:GeoQuoCox} and \S5.2 of \cite{david2011toric}) we have that any singular point $p\in \tilde{V}$ has a corresponding singular point in $V$, provided that $p\notin V(B) \subset\CC^m$. Hence, again using the toric ideal-variety correspondence, we conclude that the singularity subscheme of $V$ is the subscheme of $X_{\Sigma}$ defined by the $B$-saturated ideal $(I+K):B^{\infty}$. 
\end{proof}

\end{appendices}

\begin{footnotesize}
\bibliographystyle{plain}

\begin{thebibliography}{10}

\bibitem{aluffi1995singular}
P. Aluffi:
\newblock Singular schemes of hypersurfaces.
\newblock {\em Duke Mathematical Journal}, 80(2):325--352, 1995.

\bibitem{aluffi1999chern}
P. Aluffi:
\newblock {C}hern classes for singular hypersurfaces.
\newblock {\em Transactions of the AMS},
  351(10):3989--4026, 1999.

\bibitem{aluffi2003computing}
P Aluffi:
\newblock Computing characteristic classes of projective schemes.
\newblock {\em J. of Symbolic Comp.\ }, 35(1):3--19, 2003.

\bibitem{aluffi2006classes}
P. Aluffi:
\newblock Classes de {C}hern des vari{\'e}t{\'e}s singulieres, revisit{\'e}es.
\newblock {\em Comp.\ Ren.\ Math.\ }, 342(6):405--410, 2006.

\bibitem{aluffi2009cherntadpole}
P. Aluffi and M. Esole: {C}hern class identities from tadpole matching in type {IIB} and {F}-theory. {\em J. of High Energy Phy.}, (03):032, 2009.

\bibitem{burgisser2005counting}
P. B{\"u}rgisser, F. Cucker, and M. Lotz:
\newblock Counting complexity classes for numeric computations. iii: Complex
  projective sets.
\newblock {\em Foundations of Computational Mathematics}, 5(4):351--387, 2005.

\bibitem{collinucci2009d}
A. Collinucci, F. Denef, and M. Esole: D-brane deconstructions in {IIB} orientifolds. {\em J. of High Energy Phy.}, (02):005, 2009.

\bibitem{cox1995homogeneous}
D. Cox:
\newblock The homogeneous coordinate ring of a toric variety.
\newblock {\em Journal of Alg. Geo.}, 1995, Erratum 2014.

\bibitem{david2011toric}
D. Cox, J. Little, and H. Schenck:
\newblock {\em Toric varieties}, volume 124.
\newblock American Mathematical Soc., 2011.

\bibitem{danilov1978geometry}
V.I. Danilov:
\newblock The geometry of toric varieties.
\newblock {\em Russian Mathematical Surveys}, 33(2):97--154, 1978.

\bibitem{eisenbud20163264}
D. Eisenbud and J. Harris:
\newblock {\em 3264 and all that: A second course in algebraic geometry}.
\newblock Cambridge University Press, 2016.

\bibitem{Jost}
D. Eklund, C. Jost, and C. Peterson:
\newblock A method to compute {S}egre classes of subschemes of projective
  space.
\newblock {\em Journal of Algebra and its Applications}, 2013.

\bibitem{faugere1993efficient}
J. Faug\`{e}re, P. Gianni, D. Lazard, and T. Mora:
\newblock Efficient computation of zero-dimensional {G}r{\"o}bner bases by
  change of ordering.
\newblock {\em J. of Symbolic Comp.\ }, 16(4):329--344, 1993.

\bibitem{fujino2005smooth}
O. Fujino and S. Payne:
\newblock Smooth complete toric threefolds with no nontrivial nef line bundles.
\newblock {\em Proceedings of the Japan Academy, Series A, Mathematical
  Sciences}, 81(10):174--179, 2005.

\bibitem{fullwood2014milnor}
J. Fullwood:
\newblock On {M}ilnor classes via invariants of singular subschemes.
\newblock {\em Journal of Sing.}, 8:1--10, 2014.

\bibitem{fulton}
W. Fulton:
\newblock {\em Intersection Theory}.
\newblock Springer, 2nd edition, 1998.

\bibitem{M2}
D. Grayson and M. Stillman:
\newblock {\em Macaulay2, a software system for research in algebraic
  geometry}, 2016.

\bibitem{harris2015computing}
C. Harris:
\newblock Computing {S}egre classes in arbitrary projective varieties.
\newblock {\em arXiv:1511.06906}, 2015.

\bibitem{harris1992algebraic}
J. Harris:
\newblock {\em Algebraic geometry: a first course}, volume 133.
\newblock Springer, 1992.

\bibitem{HashemiLazardGbBounds}
A. Hashemi, D. Lazard, and J.~Meakin:
 Sharper complexity bounds for zero-dimensional {G}r\"obner bases and
  polynomial system solving. {\em International Journal of Algebra and Computation}, 21(5):703 --
  713, 2011.

\bibitem{Helmer2015}
M. Helmer:
\newblock Algorithms to compute the topological {E}uler characteristic,
  {C}hern-{S}chwartz-{M}acpherson class and {S}egre class of projective
  varieties.
\newblock {\em J. of Symbolic Comp.\ }, 2015.

\bibitem{HelmerTCS}
M.~Helmer, with Appendix by M. Helmer and \'{E}. Schost:
\newblock A direct algorithm to compute the topological {E}uler characteristic
  and {C}hern-{S}chwartz-{M}acpherson class of projective complete intersection
  varieties.
\newblock {arxiv: 1410.4113},
  2015.
  
\bibitem{huh2012maximum}
J. Huh:
\newblock The maximum likelihood degree of a very affine variety.
\newblock {\em Comp.\ Math.\ }, pages 1--22, 2012.

\bibitem{jost2013algorithm}
C. Jost:
\newblock An algorithm for computing the topological {E}uler characteristic of
  complex projective varieties.
\newblock {\em arXiv:1301.4128}, 2013.

\bibitem{kennedy1990macpherson}
G. Kennedy:
\newblock Mac{P}herson's {C}hern classes of singular algebraic varieties.
\newblock {\em Comm.\ in Alg.}, 18(9):2821--2839, 1990.

\bibitem{kleiman1974transversality}
S. Kleiman:
\newblock The transversality of a general translate.
\newblock {\em Compo.\ Math.\ }, 28(3):287--297, 1974.



\bibitem{Lecerf03}
G. Lecerf:
\newblock Computing the equidimensional decomposition of an algebraic closed
  set by means of lifting fibers.
\newblock {\em Journal of Complexity}, 19(4):564--596, 2003.

\bibitem{macpherson1974chern}
R. MacPherson:
\newblock {C}hern classes for singular algebraic varieties.
\newblock {\em The Annals of Math.}, 100(2):423--432, 1974.

\bibitem{BuzEuler2012}
MA. Marco-Buzun\'ariz: A polynomial generalization of the Euler characteristic for algebraic sets. Journal of Singularities. 2012;4:114-130.


\bibitem{moe2013segre}
T.K. Moe and N. Qviller:
\newblock {S}egre classes on smooth projective toric varieties.
\newblock {\em Math. Zeitschrift}, 275(1-2):529--548, 2013.

\bibitem{obro2008classification}
M. {\O}bro:
\newblock Classification of smooth {F}ano polytopes.
\newblock {\em Ph.D. Dissertation, Dep.\ of Math.\ Aarhus
  Uni.\ }, 2008.

\bibitem{rodriguez2015maximum}
J.I. Rodriguez and B. Wang:
\newblock The maximum likelihood degree of rank 2 matrices via {E}uler
  characteristics.
\newblock {\em arXiv preprint arXiv:1505.06536}, 2015.

\bibitem{sommese2005numerical}
A.J. Sommese and C.W. Wampler:
\newblock {\em The Numerical Solution of Systems of Polynomials Arising in
  Engineering and Science}.
\newblock World Scientific, 2005.

\bibitem{sage}
W.\thinspace{}A. Stein et~al:
\newblock {\em {S}age {M}athematics {S}oftware ({V}ersion 5.11)}.
\newblock {\tt http://www.sagemath.org}.

\end{thebibliography}

\end{footnotesize}
\end{document}